\def\newaliasedtheorem#1[#2]#3{
  \newaliascnt{#1@alt}{#2}
  \newtheorem{#1}[#1@alt]{#3}
  \expandafter\newcommand\csname #1@altname\endcsname{#3}
}
\numberwithin{equation}{section}
\newtheoremstyle{slanted}{\topsep}{\topsep}{\slshape}{}{\bfseries}{.}{.5em}{}
\theoremstyle{plain}
\newtheorem{theorem}{Theorem}[section]
\theoremstyle{definition}
\theoremstyle{remark}
\theoremstyle{question}
\begin{document}
\title{A $C^m$ Whitney Extension Theorem for Horizontal Curves in Free Step $2$ Carnot Groups}

\author{Hyogo Shibahara
\thanks{Department of Mathematical Sciences, University of Cincinnati, Cincinnati, OH 45221, U.S.A., \url{shibahho@mail.uc.edu}.}} 
\maketitle

\begin{abstract}
We establish a $C^m$ Whitney extension theorem for horizontal curves in free step~$2$ Carnot groups $\mathbb{G}_r$ for an arbitrary number of generators  $r \geq 2$. This extends existing results in the Heisenberg group. New techniques include a quantitative linear dependence structure on jets on the horizontal layer, a discretization process to correct vertical areas, and studying the effect of unrelated vertical areas.
\end{abstract}

\noindent{\small \emph{Keywords and phrases} : Heisenberg group, Carnot group,  horizontal curve, Whitney extension theorem}
\vskip.2cm
\noindent{\small Mathematics Subject Classification (2020) : Primary: 53C17; Secondary: 54C20}

\allowdisplaybreaks{

\section{Introduction}
Let $K \subseteq \mathbb{R}$ be a compact subset.  Whitney introduced an intrinsic notion of  the class $C^m(K)$ through jets that satisfy an appropriate convergence rate for the remainder term of the Taylor polynomial of a jet, see Definition~\ref{def of whitney fields}. This definition of the class $C^m(K)$ coincides with the extrinsic definition of the class $C^m(K)$, i.e., $C^m(K) = C^m(\mathbb{R})|_{K}$. This is now called the Whitney extension theorem. The precise statement can be found in \cite{W} and Theorem~\ref{Classical whitney extension}.

Over the past decade, a number of $C^1$ Whitney extension theorems for horizontal curves in Carnot groups and sub-Riemannian manifolds have been established, see \cite{JS}, \cite{SZ}, \cite{Z1} and \cite{SS}. The first result in this direction was made by Zimmerman for $C^1$ horizontal curves in the Heisenberg group, see \cite{Z1} and \cite{Sp} for a related result. Inspired by \cite{M}, Juillet and Sigalotti defined an intrinsic notion of the class $C^1_{H}(K)$ through the coordinate-free $C^1$ Taylor polynomial for horizontal curves and established a $C^1$ Whitney extension theorem for horizontal curves in pliable Carnot groups. Moreover, they examined exactly when the equality $C^1_{H}(K)=C^1_{H}(\mathbb{R})|_{K}$ holds with their Whitney condition through the geometric property called pliability.

On the other hand, a $C^m$ Whitney extension theorem for horizontal curves in Carnot groups is poorly understood for $m>1$. The only known result is in the setting of the Heisenberg group by Pinamonti, Speight and Zimmerman, see \cite{PSZ}. To the best of our knowledge, an appropriate definition of a  coordinate-free $C^m$ Taylor polynomial for a horizontal curve is unknown for $m\geq 2$, while Juillet and Sigalotti found a canonical one in the $C^1$ case.

One notable application of the Whitney extension theorem is Lusin approximation, which allows one to approximate a given mapping by a smoother mapping outside a set of small measure. From \cite[Section~5]{JS}, we know that the $C^1$ Lusin approximation holds for pliable Carnot groups.  For the higher order case, Cappoli, Pinamonti and Speight established a $C^m$ Lusin approximation theorem for horizontal curves through a $C^m$ Whitney extension theorem for horizontal curves in the Heisenberg group and the $L^1$ differentiability of the first derivatives of functions on the horizontal layer, see \cite[Theorem~4.1]{CPS}. Whitney extension theorem also has other applications, for instance, in the study of rectifiability of the reduced boundary of a set of finite perimeter (\cite{FSC}), see also \cite[Remark~5.3]{JS} for another application. For this reason, it is important to explore the Whitney extension problem.

In order to better understand the $C^m$ Whitney extension problem for horizontal curves in Carnot groups, we study the setting of free step~$2$ Carnot groups with $r$ generators, denoted by $\mathbb{G}_r$. Every free step~$2$ Carnot group admits an explicit representation in $\mathbb{R}^N$ such that the group law for each vertical component is defined similar to that of the vertical component in the first Heisenberg group $\mathbb{H}^1$ (up to some constant and sign), see Definition~\ref{freegroupstep2}. Namely, each vertical component is given by the signed area on the associated plane, see Lemma~\ref{ij-th component representation}. Hence one could ask if a  generalization of the conditions which appeared in \cite{PSZ} gives the desired $C^m$ Whitney extension theorem for horizontal curves in $\mathbb{G}_r$.

The motivation to investigate the setting of $\mathbb{G}_r$ is that they form a much richer class of Carnot groups than the Heisenberg groups while the group law of each vertical component is that of the (first) Heisenberg group $\mathbb{H}^1$. Surprisingly, the direct generalization of the $C^m$ Whitney conditions from the Heisenberg group does \emph{not} work in $\mathbb{G}_3$, see Section~\ref{Counterexample subsection}. This example in $\mathbb{G}_3$ shows that the interplay among vertical areas needs to be taken into account and makes the $C^m$ Whitney extension problem in $\mathbb{G}_r$ challenging.

One of the new contributions in the present paper is the generalized $A/V$ condition that takes into account the linear dependence between jets on the horizontal layer, see Definition~\ref{Definition of generalized A/V} and Definition~\ref{The generalized A/V in Gr} for the $\mathbb{G}_3$ case and the general $\mathbb{G}_r$ case, respectively. Roughly speaking, the generalized $A/V$ condition captures the fact that if the jets in the horizontal layer are linearly dependent in some quantitative way, then the jets in the vertical layers must also be related. Since the Heisenberg groups (or $\mathbb{G}_2$) have only one vertical component, this phenomenon was hidden. We note that this generalized $A/V$ condition is stronger than the vertical component-wise $A/V$ condition that looks rather natural from the setting of the Heisenberg group, see Section~\ref{Counterexample subsection}. While it is not hard to see that $C^m$ horizontal curves satisfy the generalized $A/V$ condition (Propositions~\ref{New Condition} and \ref{necessary Gr}), it is far from obvious that jets satisfying the generalized $A/V$ condition extend to a $C^m$ horizontal curve. In the beginning of our proof of sufficiency, we follow the strategy employed in the Heisenberg group \cite{PSZ}. Namely, we use the classical Whitney extension theorem to obtain extensions of jets on the horizontal layer. Then we perturb the obtained extensions slightly outside $K$ to correct the vertical areas. As in the case of the Heisenberg group we employ techniques such as the Markov inequality for polynomials (Lemma~\ref{Classical Markov inequality}), but our arguments depart from their proof when constructing perturbations to correct multiple vertical areas.

In Sections~\ref{Necessity G3 Section} and \ref{Sufficiency G3 Section}, we first restrict our attention to $\mathbb{G}_3$ and establish  the $C^m$ Whitney extension theorem for horizontal curves in $\mathbb{G}_3$, see Theorem~\ref{Cm Whitney G3}. The significant difference from \cite{PSZ} when constructing perturbations is the way that we fix the last vertical component. For instance, after a suitable ordering of jets, one can fix the $21$ and $31$ areas using techniques similar to \cite{PSZ}. However, one must fix the $32$ area without affecting the $21$ and $31$ areas. The key step is a certain discretization process which allows us to work with $\mathbb{R}^N$ and use elementary linear algebra in order to find appropriate perturbations. We emphasize that we lose some information about the original Whitney extensions of jets on the horizontal layer by the above discretization process. The Markov inequality for polynomials, however, ensures us that the information we lost is somehow negligible.

In Sections~\ref{Necessity Gr Section} and \ref{Sufficiency Gr Section}, we prove a $C^m$ Whitney extension theorem for horizontal curves in $\mathbb{G}_r$ (Theorem~\ref{Cm Whitney Gr}) by mathematical induction with respect to $r\geq 3$. Somewhat surprisingly, the generalized $A/V$ condition in $\mathbb{G}_r$ with $r \geq 4$ (Definition~\ref{The generalized A/V in Gr}) cares about the interplay among \emph{all} vertical areas, which one would not expect from the group law of $\mathbb{G}_r$. Although there are some technical difficulties to establish the $C^m$ Whitney extension theorem for horizontal curves in $\mathbb{G}_r$, essential ideas, such as the linear dependence of jets and a discretization process play important roles again.

This paper is organized as follows. In Section~\ref{Preliminaries}, we recall some basic definitions and facts about Whitney extension theorems, the Markov inequality for polynomials and Carnot groups. In Section~\ref{Counterexample subsection}, we provide a family of jets of order $2$ in $\mathbb{G}_3$ that satisfies the natural conditions generalized from \cite{PSZ} but does not admit a $C^2$ extension (Proposition~\ref{Example}). In Section~\ref{Necessity G3 Section}, we define the generalized $A/V$ condition in $\mathbb{G}_3$ (Definition~\ref{Definition of generalized A/V}) and prove that all $C^m$ horizontal curves in $\mathbb{G}_3$ satisfy this condition (Proposition~\ref{New Condition}). In Section~\ref{Sufficiency G3 Section}, we prove that this condition is sufficient (Proposition~\ref{Our Goal}) and hence establish a $C^m$ Whitney extension theorem for horizontal curves in $\mathbb{G}_3$ (Theorem~\ref{Cm Whitney G3}). In Section~\ref{Necessity Gr Section}, we extend our notion of the generalized $A/V$ condition to the setting of $\mathbb{G}_r$ (Definition~\ref{The generalized A/V in Gr}) and check that all $C^m$ horizontal curves in $\mathbb{G}_r$ satisfy this condition (Proposition~\ref{necessary Gr}). Finally, in Section~\ref{Sufficiency Gr Section}, we prove sufficiency (Proposition~\ref{Our Ultimate Goal}) and hence establish a $C^m$ Whitney extension theorem for horizontal curves in $\mathbb{G}_r$ (Theorem~\ref{Cm Whitney Gr}).

\subsubsection*{\hfil Acknowledgement \hfil}
The author would like to thank his advisor Gareth Speight for numerous discussions and travel support (Simons Foundation \#576219). He thanks Gareth Speight and Andrea Pinamonti for suggesting Proposition~\ref{Example}. He also thanks Scott Zimmerman for helpful discussion. This research was supported by a Charles Phelps Taft Dissertation Fellowship and a Taft Graduate Summer Fellowship at the University of Cincinnati. The author appreciates their generous financial support throughout the 2022-2023 academic year.

\section{Preliminaries}\label{Preliminaries}

\subsection{Classical Whitney Extension Theorem}
In this subsection we review the classical Whitney extension theorem. Throughout this paper, the $k$-th derivative of a function  $f$ is denoted by $D^k f$ for any $k \in \mathbb{N}$ and we also set $D^0 f:= f$ for convenience. We also use the notation $f'$ for the first derivative of $f$. We start with the notion of jets.
\begin{definition}
		Let $K \subseteq \mathbb{R}$ be a compact subset. A \emph{jet} $F=(F^k)_{k=0, \cdots, m}$ of order $m$ on $K$ is a collection of $m+1$ continuous functions from $K$ to $\mathbb{R}$.
	\end{definition}
	
	Given a jet $F=(F^k)_{k=0, \cdots, m}$, we can define the formal $m$-th order Taylor polynomial at $a \in K$ by
	\begin{equation}
		T_a^m F(x):=\sum_{k=0}^{m} \frac{F^k(a)}{k!}(x-a)^m.
	\end{equation}
	When the order of the Taylor polynomial  $m \in \mathbb{N}$ is clear, we often use the notation $T_a F$  instead of $T^m_a F$ for simplicity. The class $C^m(K)$ is defined by the appropriate convergence rate of the formal remainder term of a given jet.
	\begin{definition}\label{def of whitney fields}
		Let $m\geq 1$ and let $F=(F^k)_{k=0, \cdots, m}$ be a jet on $K$. We say that $F=(F^k)_{k=0, \cdots, m} \in C^m(K)$ if for all $0\leq k\leq m$
		\[
		\limsup_{\substack{ |a-b|\to 0 \\ a, b \in K}} \frac{|(R^m_a F)^k(b)|}{|b-a|^{m-k}}=0 \ \ \text{where} \ \ (R^m_a F)^k(x):=F^k(x)-\sum_{l=0}^{m-k}\frac{F^{k+l}(a)}{l!}(x-a)^l.
		\] 
		We call $F \in C^m(K)$ a \emph{Whitney field of  class $C^m$}.
	\end{definition}
	Note that by Taylor's theorem, for every $f \in C^m(\mathbb{R})$ and compact subset $K \subseteq \mathbb{R}$, the jets defined by $(D^k f|_{K})_{0 \leq k \leq m}$ is a Whitney field of class $C^m$, see \cite{F}. The following is due to Whitney \cite{W}.
	
	\begin{theorem}\label{Classical whitney extension}
		We have $C^m(K)=C^m(\mathbb{R})|_{K}$, i.e., for any $F=(F^k)_{k=0, \cdots, m} \in C^m(K)$, there exists $f \in C^m(\mathbb{R})$ such that $D^kf|_{K}=F^k$ on $K$ for any $0 \leq k \leq m$.
	\end{theorem}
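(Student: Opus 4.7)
The plan is to prove Theorem~\ref{Classical whitney extension} by an explicit construction modeled on Whitney's original argument, using a Whitney decomposition of the complement of $K$. Since $K \subseteq \mathbb{R}$ is compact, the open set $\mathbb{R} \setminus K$ is a countable disjoint union of open intervals, and I would subdivide these into a family of closed intervals $\{Q_i\}_{i \in I}$ whose diameters are comparable to $\dist(Q_i, K)$, such that the slightly enlarged family $\{Q_i^*\}$ has bounded overlap. Subordinate to this cover, I would construct a $C^\infty$ partition of unity $\{\varphi_i\}$ with $\sum_i \varphi_i \equiv 1$ on $\mathbb{R} \setminus K$, $\operatorname{supp}\varphi_i \subseteq Q_i^*$, and the derivative bounds $|D^k \varphi_i(x)| \leq C_k\, \diam(Q_i)^{-k}$ for all $0 \leq k \leq m$.

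For each $i$, I would select a point $a_i \in K$ with $|a_i - x| \leq C \dist(x, K)$ for all $x \in Q_i^*$, and define
\[
f(x) := \begin{cases} F^0(x), & x \in K, \\ \displaystyle\sum_{i \in I} \varphi_i(x)\, T^m_{a_i} F(x), & x \in \mathbb{R} \setminus K. \end{cases}
\]
Local finiteness of the partition makes $f$ automatically $C^\infty$ on $\mathbb{R} \setminus K$, with derivatives given by the Leibniz rule. The remaining task is to verify that $f$ is $C^m$ at each $a \in K$ with $D^k f(a) = F^k(a)$. For points in the interior of $K$ this follows directly from Definition~\ref{def of whitney fields} applied within $K$, so the real work concerns $a \in \partial K$.

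For such an $a$, I would show that $T^m_a F$ is the $m$-th order Taylor polynomial of $f$ at $a$. Using $\sum_i \varphi_i \equiv 1$ on a neighborhood of $x \in \mathbb{R}\setminus K$, one has
\[
f(x) - T^m_a F(x) = \sum_i \varphi_i(x)\bigl(T^m_{a_i} F(x) - T^m_a F(x)\bigr).
\]
The central estimate is that for $x \in Q_i^*$ near $a$, the quantity $|D^k(T^m_{a_i} F - T^m_a F)(x)|$ is $o(|x-a|^{m-k})$, proved by expanding the difference of polynomials in powers of $(x-a_i)$ and controlling each coefficient difference $F^j(a_i) - (T^m_a F)^{(j)}(a_i)$ by the Whitney remainder $(R^m_a F)^j(a_i)$, with $|a-a_i| \lesssim |x-a|$ guaranteed by the geometry of the decomposition.

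The hard part will be combining this pointwise polynomial estimate with the Leibniz expansion of $D^k f$. Derivatives falling on $\varphi_i$ lose factors of $\diam(Q_i)^{-l}$, and to absorb these I would apply a Markov-type inequality for polynomials on $Q_i^*$, trading polynomial derivatives for sup-norm bounds and gaining back factors of $\diam(Q_i)^{l}$; the bounded-overlap property ensures that only $O(1)$ terms contribute to each $x$. Putting the pieces together yields $|D^k f(x) - (T^m_a F)^{(k)}(x)| = o(|x-a|^{m-k})$, which simultaneously identifies $D^k f(a) = F^k(a)$ and establishes continuity of $D^k f$ at $a$, completing the proof.
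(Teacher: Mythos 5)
The paper does not prove this theorem; it cites Whitney~\cite{W}, so there is no in-paper argument to compare against. Your sketch follows the canonical Whitney-decomposition proof, and the architecture (Whitney intervals, partition of unity, $f=\sum_i\varphi_i\,T^m_{a_i}F$ off $K$, remainder control via $|a-a_i|\lesssim|x-a|$) is the right one. The gap is in how you propose to close the Leibniz estimate.

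When $D^k$ acts on $f-T^m_aF=\sum_i\varphi_i(T^m_{a_i}F-T^m_aF)$, the terms with $l\geq 1$ derivatives on $\varphi_i$ contribute $D^l\varphi_i\cdot D^{k-l}(T^m_{a_i}F-T^m_aF)$. Expanding the polynomial difference about $a_i$, the coefficients are the Whitney remainders $(R^m_aF)^n(a_i)$, of size at most $\eta(|a-a_i|)|a-a_i|^{m-n}$ for a modulus of continuity $\eta$ coming from Definition~\ref{def of whitney fields}; combined with $|x-a_i|\lesssim\diam(Q_i)$ and $|D^l\varphi_i|\lesssim\diam(Q_i)^{-l}$, the resulting bound is a sum of terms $\eta\,|a-a_i|^{m-n}\diam(Q_i)^{n-k}$ over $k-l\leq n\leq m$. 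For $n<k$ the exponent of $\diam(Q_i)$ is negative, and since $\diam(Q_i)\sim\dist(x,K)$ can be much smaller than $|a-a_i|\sim|x-a|$, this is not $o(|x-a|^{m-k})$. A Markov-type bound does not help here: Markov gives $\sup_{Q_i^*}|D^{k-l}P|\leq C\,\diam(Q_i)^{-(k-l)}\sup_{Q_i^*}|P|$, which reproduces the same $\diam(Q_i)^{-k}$ loss against $\sup_{Q_i^*}|T^m_{a_i}F-T^m_aF|=O(\eta\,|x-a|^m)$, leaving the unbounded ratio $(|x-a|/\diam(Q_i))^k$. A concrete problem configuration: $K=\{0\}\cup\{1/n:n\in\mathbb{N}\}$, $a=0$, and $x$ approaching $1/n$ from below with $\dist(x,K)$ far smaller than $1/n$.

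The missing ingredient is the identity $\sum_iD^l\varphi_i\equiv 0$ on $\mathbb{R}\setminus K$ for every $l\geq 1$, which follows from $\sum_i\varphi_i\equiv 1$ there. It lets you fix, for each $x$, a reference index $i_0$ with $x\in Q_{i_0}^*$ and replace $T^m_{a_i}F-T^m_aF$ by $T^m_{a_i}F-T^m_{a_{i_0}}F$ in all $l\geq 1$ terms. Because any two Whitney intervals whose enlargements contain $x$ have comparable diameters, one has $|a_i-a_{i_0}|\lesssim\diam(Q_i)$, so the troublesome powers of $\diam(Q_i)$ become harmless and the estimate closes. With that cancellation inserted, the rest of your outline is sound.
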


\subsection{Free Step $2$ Carnot Groups}
We define free step $2$ Carnot groups. We refer the readers to \cite[Definition~14.1.3]{BLU} for more details. Fix an integer $r\geq 2$ and denote $n=r+r(r-1)/2$. In $\mathbb{R}^{n}$ denote the coordinates by $x_{i}$, $1\leq i\leq r$, and $x_{ij}$, $1\leq j<i\leq r$. Let $\frac{\partial}{\partial x_{i}}$ and $\frac{\partial}{\partial x_{ij}}$ denote the standard basis vectors in this coordinate system. Define $n$ vector fields on $\mathbb{R}^n$ by:
\[
 X_{k}:=\frac{\partial}{\partial x_{k}}+\sum_{j>k} \frac{x_{j}}{2}\frac{\partial}{\partial x_{jk}}-\sum_{j<k}\frac{x_{j}}{2}\frac{\partial}{\partial x_{kj}} \qquad \mbox{if }1\leq k \leq r,\]
\[X_{kj}:=\frac{\partial}{\partial x_{kj}} \qquad \mbox{if } 1\leq j<k\leq r.
\]

\begin{definition}\label{freegroupstep2}
We define the \emph{free step $2$ Carnot group with $r$ generators} by $\mathbb{G}_r:=(\mathbb{R}^{n}, \, \cdot)$, where the product $x\cdot y \in \mathbb{G}_r$ of $x,y\in \mathbb{G}_r$ is given by:
\[(x\cdot y)_{k} = x_{k}+y_{k} \qquad \mbox{if }1\leq k\leq r,\]
\[(x\cdot y)_{ij} = x_{ij}+y_{ij}+\frac{1}{2}(x_{i}y_{j}-y_{i}x_{j}) \qquad \mbox{if }1\leq j<i\leq r.\]
One can check that $\mathbb{G}_r$ is a Lie group and the cooresponding Lie algebra $\mathfrak{g}$ admits the stratification
\[
\mathfrak{g}=V_1 \oplus V_2
\]
where
\[V_{1}=\mathrm{Span} \{X_{k}\colon 1\leq k \leq r\} \mbox{ and }V_{2}=\mathrm{Span} \{X_{kj}\colon 1\leq j<k\leq r\}.\]
\end{definition}

One can verify that for $1\leq j<k \leq r$ and $1\leq i\leq r$:
\[ 
[X_{k}, X_{j}]=X_{kj} \mbox{ and } [X_{i}, X_{kj}]=0.
\]

\begin{definition}
	An absolutely continuous curve $\gamma: [a,b] \to \mathbb{G}_r$ is \emph{horizontal} if the tangent vector $\gamma'(t)$ lies in the linear span of $\{X_{k}(\gamma(t)) \ | \ 1\leq k \leq r\}$ where $X_{k}(\gamma(t))$ is the tangent vector obtained by evaluating $X_k$ at $\gamma(t)$. 
\end{definition}
A direct computation leads us to the following explicit formula for horizontal curves in $\mathbb{G}_r$.
\begin{lemma}\label{ij-th component representation}
	An absolutely continuous curve $\gamma: [a,b] \to \mathbb{G}_r$ is horizontal in $\mathbb{G}_r$ if and only if 
	\begin{equation}
		\gamma_{ij}(t)-\gamma_{ij}(a)=\frac{1}{2}\int_a^t \gamma_i\gamma_j'-\gamma_i'\gamma_j,
	\end{equation}
	for all $t \in [a,b]$ and all $1 \leq j <i \leq r$.
\end{lemma}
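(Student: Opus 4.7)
The plan is to prove the equivalence by directly computing the horizontality condition component by component, using the definition of the vector fields $X_k$ in the given coordinates.

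First I would write out what horizontality means in coordinates. Since $\gamma$ is absolutely continuous, $\gamma'(t)$ exists for a.e. $t \in [a,b]$, and the curve is horizontal iff there exist measurable functions $c_1(t), \ldots, c_r(t)$ with
\[
\gamma'(t) = \sum_{k=1}^{r} c_k(t)\, X_k(\gamma(t)) \qquad \text{for a.e. } t.
\]
Expanding using the formula for $X_k$ from Definition~\ref{freegroupstep2}, the right-hand side equals
\[
\sum_{k=1}^{r} c_k(t)\,\frac{\partial}{\partial x_k}\bigg|_{\gamma(t)} + \sum_{k=1}^{r} c_k(t)\!\sum_{j>k}\frac{\gamma_j(t)}{2}\frac{\partial}{\partial x_{jk}}\bigg|_{\gamma(t)} - \sum_{k=1}^{r} c_k(t)\!\sum_{j<k}\frac{\gamma_j(t)}{2}\frac{\partial}{\partial x_{kj}}\bigg|_{\gamma(t)}.
\]
Matching coefficients of $\partial/\partial x_k$ identifies $c_k(t) = \gamma_k'(t)$ for each $1 \le k \le r$. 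Then matching coefficients of $\partial/\partial x_{ij}$ for $1 \le j < i \le r$, I keep the contributions where $(j,k) \mapsto (i,j)$ in the first double sum (i.e., $k=j$, and $i$ playing the role of the outer index) and where $(k,j) \mapsto (i,j)$ in the second (i.e., $k=i$, inner index $j$). This yields
\[
\gamma_{ij}'(t) \;=\; \frac{\gamma_i(t)}{2}\,\gamma_j'(t) \;-\; \frac{\gamma_j(t)}{2}\,\gamma_i'(t) \;=\; \tfrac{1}{2}\bigl(\gamma_i(t)\gamma_j'(t) - \gamma_i'(t)\gamma_j(t)\bigr),
\]
valid for a.e.\ $t$. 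Integrating from $a$ to $t$ (using absolute continuity of $\gamma_{ij}$, $\gamma_i$, $\gamma_j$) gives the claimed formula.

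For the converse direction, assume the integral formula holds for all $t$ and all $1 \le j < i \le r$. Differentiating a.e.\ (again by absolute continuity) gives $\gamma_{ij}'(t) = \tfrac{1}{2}(\gamma_i\gamma_j' - \gamma_i'\gamma_j)(t)$. Setting $c_k(t) := \gamma_k'(t)$ and computing $\sum_k c_k(t) X_k(\gamma(t))$ as above, one sees that every coordinate of this vector matches the corresponding coordinate of $\gamma'(t)$, so $\gamma'(t) \in \mathrm{Span}\{X_k(\gamma(t))\}$ a.e.\ and $\gamma$ is horizontal.

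The main obstacle is simply bookkeeping: one has to be careful with the two sign conventions in the definition of $X_k$ (the $j > k$ sum versus the $j < k$ sum) and with the fact that the vertical index $ij$ is only defined for $i > j$, so exactly one term from each of the two sums contributes to a given $\partial/\partial x_{ij}$. Everything else is a straightforward use of absolute continuity.
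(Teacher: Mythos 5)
Your proof is correct and takes the only natural approach here: the paper itself just says this follows from "a direct computation" and supplies no further argument. You correctly unpack horizontality into $\gamma' = \sum_k c_k X_k(\gamma)$ a.e., read off $c_k = \gamma_k'$ from the horizontal coordinates, and then carefully extract the two contributions to $\partial/\partial x_{ij}$ (one from $X_j$ via the $l>j$ sum with $l=i$, one from $X_i$ via the $l<i$ sum with $l=j$), yielding $\gamma_{ij}' = \tfrac12(\gamma_i\gamma_j' - \gamma_i'\gamma_j)$ a.e.; absolute continuity then gives both directions of the equivalence by integrating and differentiating.
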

For $k \in \mathbb{N}$, define  
\begin{equation}\label{def of polynomials}
	\mathcal{P}^k(x_0, y_0, x_1, y_1, \cdots, x_k, y_k):=\frac{1}{2} \sum_{l=0}^{k-1}\binom{k-1}{l}(x_l y_{k-l}- y_l x_{k-l}).
\end{equation}
For a $C^m$ horizontal curve $\gamma$,  differentiating the $ij$-th component $\gamma_{ij}$ and Lemma~\ref{ij-th component representation} give us the following.
\begin{lemma}\label{horizontal polynomial}
	For any $C^m$ horizontal curve $\gamma:(a, b) \to \mathbb{G}_r$, we have that
	\[
D^k \gamma_{ij}(t)=\mathcal{P}^k \left(\gamma_{i}(t),\gamma_{j}(t),\gamma_{i}^1(t),\gamma_{j}^1(t),\dots,\gamma_{i}^{k}(t),\gamma_{j}^{k}(t)\right),
\]
for every $t \in (a,b)$, $1\leq j<i\leq r$ and $1 \leq k \leq m$ where $\mathcal{P}^k$ are the polynomials defined in \eqref{def of polynomials}.
\end{lemma}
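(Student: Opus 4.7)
The plan is to reduce everything to a single differentiation via the integral representation of Lemma~\ref{ij-th component representation}, and then apply the general Leibniz rule to handle the remaining $k-1$ derivatives. The key observation is that Lemma~\ref{ij-th component representation} already supplies a single clean identity, so the induction step is just a computation.

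First I would apply the fundamental theorem of calculus to the integral formula in Lemma~\ref{ij-th component representation}, using the fact that $\gamma$ is $C^m$ with $m\geq 1$ (so the integrand $\gamma_i\gamma_j'-\gamma_i'\gamma_j$ is continuous). This gives
\[
\gamma_{ij}'(t)=\tfrac12\bigl(\gamma_i(t)\gamma_j'(t)-\gamma_i'(t)\gamma_j(t)\bigr),
\]
which already matches $\mathcal{P}^1(\gamma_i(t),\gamma_j(t),\gamma_i^1(t),\gamma_j^1(t))=\tfrac12(\gamma_i\gamma_j^1-\gamma_j\gamma_i^1)$ from \eqref{def of polynomials}, handling the base case $k=1$.

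For $k\ge 2$, I would simply differentiate the above identity $k-1$ more times, using the general Leibniz rule on each of the two products. This produces
\[
D^k\gamma_{ij}(t)=\tfrac12\sum_{l=0}^{k-1}\binom{k-1}{l}\gamma_i^{(l)}\gamma_j^{(k-l)}-\tfrac12\sum_{l=0}^{k-1}\binom{k-1}{l}\gamma_i^{(l+1)}\gamma_j^{(k-1-l)}.
\]
To match $\mathcal{P}^k$, I would reindex the second sum via $l\mapsto k-1-l$ (equivalently, via $m=l+1$ followed by $l=k-m$) and use the symmetry $\binom{k-1}{k-1-l}=\binom{k-1}{l}$. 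The second sum then becomes
\[
\tfrac12\sum_{l=0}^{k-1}\binom{k-1}{l}\gamma_j^{(l)}\gamma_i^{(k-l)},
\]
and combining with the first sum reproduces exactly $\mathcal{P}^k(\gamma_i,\gamma_j,\gamma_i^1,\gamma_j^1,\dots,\gamma_i^k,\gamma_j^k)$ as defined in \eqref{def of polynomials}.

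There is no real obstacle here; the argument is an unpacking of the Leibniz rule together with the binomial symmetry. The only care needed is to ensure differentiability: $\gamma\in C^m$ gives that $\gamma_i,\gamma_j\in C^m$, so each $\gamma_i^{(l)},\gamma_j^{(l)}$ with $l\le m$ exists and is continuous, which legitimizes differentiating $k$ times for every $k\le m$. Alternatively, one can phrase the same argument as an induction on $k$: assume $D^{k-1}\gamma_{ij}=\mathcal{P}^{k-1}$, differentiate once, and verify the recurrence $\mathcal{P}^{k}=D\mathcal{P}^{k-1}$ by the same binomial identity. I would present the direct calculation since it is slightly cleaner and avoids the need to state and verify a recursive identity for $\mathcal{P}^k$.
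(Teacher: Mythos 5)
Your proof is correct and is exactly the computation the paper silently invokes (the paper states Lemma~\ref{horizontal polynomial} as "differentiating the $ij$-th component $\gamma_{ij}$ and Lemma~\ref{ij-th component representation}" without writing out the details). Differentiating the integral identity once, then applying Leibniz $k-1$ more times and reindexing via the binomial symmetry $\binom{k-1}{k-1-l}=\binom{k-1}{l}$ reproduces $\mathcal{P}^k$ as defined in \eqref{def of polynomials}; no gaps.
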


\subsection{A $C^m$ Whitney Extension Theorem For Horizontal Curves in $\mathbb{G}_2$}

We next review the $C^m$ Whitney extension theorem for horizontal curves in $\mathbb{G}_2$. For jets $F_1$, $F_2$, and $F_{21}$ of order $m$ on $K$, set
\begin{align}\label{Aab G2}
 A_{21}(a,b)&:=F_{21}(b)-F_{21}(a)-\frac{1}{2}\int_{a}^{b}((T_{a}^mF_1)'(T_{a}^mF_2)-(T_{a}^mF_2)'(T_{a}^mF_1))\\ \nonumber 
&\qquad +\frac{1}{2}F_1(a)(F_2(b)-T_{a}^mF_2(b))-\frac{1}{2}F_2(a)(F_1(b)-T_{a}^mF_1(b))
\end{align}
and
\begin{equation}\label{Vab G2}
V_{21}(a,b):= (b-a)^{2m} + (b-a)^{m} \int_{a}^{b}|(T_{a}^mF_1)'|+|(T_{a}^mF_2)'|.
\end{equation}

Pinamonti, Speight and Zimmerman established a $C^m$ Whitney extension theorem for horizontal curves in the Heisenberg group in \cite[Theorem~1.1]{PSZ}. The following is their main result adapted to $\mathbb{G}_2$, which has essentially the same structure as their setting of $\mathbb{H}^1$.
\begin{theorem}\label{iff}
Let $K \subseteq \mathbb{R}$ be a compact set and $F_1$, $F_2$, and $F_{21}$ be jets of order $m$ on $K$. 
Then the triple $(F_1,F_2,F_{21})$ extends to a $C^m$ horizontal curve $(f_1,f_2,f_{21})\colon \mathbb{R} \to \mathbb{G}_2$ if and only if
\begin{enumerate}
\item $F_1$, $F_2$, and $F_{21}$ are Whitney fields of class $C^m$ on $K$,
\item for every $1 \leq k \leq m$ and $t\in K$ we have
 \begin{equation}
\label{HorizAssume}
F_{21}^{k}(t) = \mathcal{P}^k \left(F_1^0(t),F_2^0(t),F_1^1(t),F_2^1(t),\dots,F_1^{k}(t),F_2^{k}(t)\right),
\end{equation}
where  $\mathcal{P}^k$ are the polynomials defined in \eqref{def of polynomials},
\item $A_{21}(a,b)/V_{21}(a,b)\to 0$ uniformly as $(b-a) \searrow 0$ with $a,b\in K$.
\end{enumerate}
\end{theorem}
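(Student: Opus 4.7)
The plan is to handle necessity by Taylor expansion applied to an assumed extension, and sufficiency by first extending the horizontal jets via the classical Whitney theorem and then surgically perturbing them on each complementary interval of $K$ to correct the vertical area.

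For necessity, suppose $(f_1,f_2,f_{21})\colon \mathbb{R}\to \mathbb{G}_2$ is a $C^m$ horizontal extension of the jets. Condition~(1) is immediate from Taylor's theorem applied to $f_1,f_2,f_{21}$, and condition~(2) is Lemma~\ref{horizontal polynomial} restricted to $K$. For condition~(3), I would start from
\[
f_{21}(b)-f_{21}(a)=\tfrac{1}{2}\int_{a}^{b}(f_2 f_1'-f_1 f_2')
\]
supplied by Lemma~\ref{ij-th component representation}, substitute $f_i=T_a^m F_i+R_i$ with Whitney remainders satisfying $R_i(x)=o(|x-a|^m)$ and $R_i'(x)=o(|x-a|^{m-1})$ uniformly in $a\in K$, and expand. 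The polynomial-only part reproduces the Taylor integral appearing in \eqref{Aab G2}; integration by parts on the cross terms that are linear in $R_i$ produces boundary pieces that telescope with the asymmetric corrections $\tfrac{1}{2}F_1(a)(F_2(b)-T_a^m F_2(b))-\tfrac{1}{2}F_2(a)(F_1(b)-T_a^m F_1(b))$ present in $A_{21}$, and the surviving residuals and the quadratic-in-$R$ integral are bounded directly by $o((b-a)^m)\int_a^b|(T_a^m F_i)'|$ and $o((b-a)^{2m})$ respectively. Both sizes are absorbed by $V_{21}(a,b)$, yielding $A_{21}(a,b)/V_{21}(a,b)\to 0$ uniformly.

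For sufficiency, apply Theorem~\ref{Classical whitney extension} to obtain $g_1,g_2\in C^m(\mathbb{R})$ with $D^k g_i|_K=F_i^k$. Fix any $a_0\in K$ and set
\[
\widetilde{g}_{21}(t):=F_{21}^0(a_0)+\tfrac{1}{2}\int_{a_0}^{t}(g_2 g_1'-g_1 g_2').
\]
By construction $(g_1,g_2,\widetilde{g}_{21})$ is a $C^m$ horizontal curve, and condition~(2) combined with Lemma~\ref{horizontal polynomial} forces $D^k\widetilde{g}_{21}|_K=F_{21}^k$ for every $1\leq k\leq m$. What may still fail is $\widetilde{g}_{21}|_K = F_{21}^0$, so the task reduces to modifying $(g_1,g_2)$ on $\mathbb{R}\setminus K$ so as to adjust the vertical values on $K$ without disturbing any jet at a point of $K$ or the global $C^m$ regularity.

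On each bounded component $(a,b)$ of $\mathbb{R}\setminus K$ with $a,b\in K$, the required area correction equals $A_{21}(a,b)$ up to errors of size $o(V_{21}(a,b))$. I would add perturbations $\varphi_1,\varphi_2$ supported in $(a,b)$ and vanishing to order $m$ at both endpoints, built from smooth bump pairs whose signed enclosed area and interaction with $g_1,g_2$ realize the target; a convenient choice is a pure loop of amplitude $\alpha$ contributing a quadratic area of order $\alpha^2(b-a)$ (matching the $(b-a)^{2m}$ part of $V_{21}$) together with a single bump of amplitude $\beta$ whose linear interaction with the Taylor polynomials contributes of order $\beta\int_a^b|(T_a^m F_i)'|$ (matching the other part of $V_{21}$). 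The heart of the argument, and the main obstacle, is to verify that after summing these corrections over all gaps the perturbed curve is still globally $C^m$; this is where the $A/V$ hypothesis and the Markov inequality for polynomials (Lemma~\ref{Classical Markov inequality}) combine, as Markov converts integral norms of the $T_a^m F_i$ into pointwise bounds on their derivatives on $(a,b)$, forcing amplitudes $\alpha=o(V_{21}(a,b)^{1/2})$ and $\beta=o((b-a)^m)$ that produce $C^k$ bounds of order $o((b-a)^{m-k})$ on every gap. Standard Whitney-type gluing across gaps then delivers the desired $C^m$ horizontal extension.
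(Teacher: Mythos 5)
The paper does not prove Theorem~\ref{iff} itself — it quotes it from \cite[Theorem~1.1]{PSZ} as background — but it carries out the same strategy in detail for $\mathbb{G}_3$ and $\mathbb{G}_r$ in Sections~\ref{Necessity G3 Section}--\ref{Sufficiency Gr Section}. Your outline (Taylor expansion of the vertical-area identity for necessity; classical Whitney extension of the horizontal jets followed by gap-by-gap compactly supported perturbations vanishing to order $m$ at the endpoints, with amplitudes calibrated through the $A/V$ condition and the Markov inequality, for sufficiency) is precisely the approach of \cite{PSZ} and the template this paper follows.
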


\subsection{Some Consequences of the Markov Inequality}
In this subsection we collect some consequences of the Markov inequality for polynomials which we use in this paper. 
\begin{lemma}[Markov inequality]\label{Classical Markov inequality}
	Let $p$ be a polynomial of degree $m\geq 1$. Then we have
	\begin{equation}
		\max_{x \in [a,b]} |p'(x)|\leq \frac{2m^2}{b-a}\max_{x \in [a, b]}|p|
	\end{equation}
	for any interval $[a, b]$.
\end{lemma}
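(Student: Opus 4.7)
The plan is to reduce to the canonical interval $[-1,1]$ by an affine change of variables, and then invoke the classical Markov brothers inequality $\max_{u \in [-1,1]} |q'(u)| \leq m^2 \max_{u \in [-1,1]} |q(u)|$ for polynomials $q$ of degree $m$. This split cleanly separates the scaling (which produces the factor $2/(b-a)$) from the sharp constant $m^2$.

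For the reduction, I would define $\phi(u) := \tfrac{b-a}{2} u + \tfrac{a+b}{2}$, a bijection from $[-1,1]$ to $[a,b]$, and set $q(u) := p(\phi(u))$. Then $q$ is a polynomial of degree $m$ and the chain rule gives $q'(u) = \tfrac{b-a}{2}\, p'(\phi(u))$. Taking suprema and using that $\phi$ is a bijection onto $[a,b]$ yields
\[
\max_{x \in [a,b]} |p'(x)| = \frac{2}{b-a}\max_{u \in [-1,1]} |q'(u)|, \qquad \max_{x \in [a,b]} |p(x)| = \max_{u \in [-1,1]} |q(u)|.
\]
Combining these with the $[-1,1]$ version of Markov's inequality applied to $q$ immediately produces the claimed estimate.

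The main obstacle is therefore establishing the Markov inequality on $[-1,1]$, for which the extremizer is the Chebyshev polynomial $T_m(u) = \cos(m \arccos u)$; it satisfies $|T_m| \leq 1$ on $[-1,1]$ and $|T_m'(\pm 1)| = m^2$. I would follow the classical Chebyshev-interpolation proof. First, on the interior interval where $|u| \leq \cos(\pi/(2m))$, Bernstein's trigonometric inequality $|q'(u)| \leq \tfrac{m}{\sqrt{1-u^2}} \max_{[-1,1]}|q|$ (obtained by writing $q(\cos\theta)$ as a trigonometric polynomial and bounding its derivative) already yields $|q'(u)| \leq m^2 \max |q|$ in this region. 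For $u$ in the remaining boundary regime near $\pm 1$, I would interpolate $q$ at the Chebyshev extrema $u_k = \cos(k\pi/m)$, $k=0,\dots,m$, obtaining an explicit Lagrange formula for $q'$ in terms of the values $q(u_k)$ and of $T_m'$; bounding each $|q(u_k)|$ by $\max|q|$ and using the identity for $\sum_k T_m(u)/((u-u_k)T_m'(u_k))$ produces the uniform bound $|q'(u)| \leq m^2 \max|q|$ at the endpoints as well, by a compactness argument closing up the two regimes.

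Combining the $[-1,1]$ estimate with the affine rescaling above gives $\max_{[a,b]}|p'| \leq \tfrac{2m^2}{b-a}\max_{[a,b]}|p|$, completing the proof. Since the inequality is sharp up to the constant on $[-1,1]$ (with equality realized by $T_m$ at $u=\pm 1$), the constant $2m^2/(b-a)$ produced by the affine reduction is sharp up to the universal factor, which is sufficient for the applications in this paper.
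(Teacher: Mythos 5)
The paper does not prove this lemma; it cites it as a classical fact (the Markov brothers inequality) and uses it as a black box. Your affine reduction from $[a,b]$ to $[-1,1]$ is correct and standard: with $\phi(u)=\tfrac{b-a}{2}u+\tfrac{a+b}{2}$ and $q=p\circ\phi$, the chain rule gives exactly the factor $2/(b-a)$, so the whole burden is the estimate $\max_{[-1,1]}|q'|\leq m^2\max_{[-1,1]}|q|$.

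Your outline of the $[-1,1]$ case is essentially the textbook proof, but as written it is a sketch rather than a proof. Two points need to be tightened if you want it to stand alone. First, the transition between the two regimes: Bernstein's inequality $|q'(u)|\leq \tfrac{m}{\sqrt{1-u^2}}\max|q|$ gives the bound $m^2\max|q|$ precisely when $\sqrt{1-u^2}\geq 1/m$, i.e.\ $|u|\leq\sqrt{1-1/m^2}$, not $|u|\leq\cos(\pi/(2m))$; you should either correct the threshold or explain why $\cos(\pi/(2m))$ works. Second, the boundary regime is where the real work lies. The standard argument interpolates $q$ at the $m+1$ Chebyshev extrema $u_k=\cos(k\pi/m)$, writes $q'(u)$ as a Lagrange sum involving $T_m'$, and uses that on the boundary arcs $|u|>\cos(\pi/(2m))$ all $u-u_k$ have the same sign, so the triangle inequality collapses the sum to $|T_m'(u)|\max_k|q(u_k)|\leq m^2\max|q|$ (monotonicity of $T_m'$ near $\pm1$). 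A ``compactness argument closing up the two regimes'' is not needed and does not quite make sense here; what you actually need is to check that the two regimes cover $[-1,1]$ and that the boundary estimate really produces $m^2$ rather than something larger. If these details are filled in, the proof is complete; since the paper takes the lemma as known, it of course has no argument to compare against, and citing a standard reference would be equally acceptable.
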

Applying the above Markov inequality for polynomials, one can prove the following, see \cite[Lemma~2.10]{PSZ}.
\begin{lemma}\label{nice subinterval}
	Let $p$ be a polynomial of degree $m\geq 1$ and $a<b$. Then there exists a subinterval $I \subseteq [a,b]$ with $|I|\geq (b-a)/(4m^2)$ such that 
	\begin{equation}
		\max_{x \in [a, b]}|p(x)|/2\leq  \min_{x \in I}|p(x)|.
	\end{equation}
\end{lemma}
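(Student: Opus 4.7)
The plan is to start from the point where $|p|$ attains its maximum and then use the Markov inequality to control how fast $|p|$ can decay away from that point. Concretely, let $M := \max_{x \in [a,b]} |p(x)|$ and choose $x_0 \in [a,b]$ with $|p(x_0)| = M$. The Markov inequality (Lemma~\ref{Classical Markov inequality}) gives
\[
\max_{x \in [a,b]} |p'(x)| \leq \frac{2m^2}{b-a} M.
\]

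Next I would set $\delta := (b-a)/(4m^2)$ and exhibit a subinterval $I \subseteq [a,b]$ of length $\delta$ that contains $x_0$. There are three cases depending on the location of $x_0$: if $x_0 \in [a, a+\delta]$, take $I=[a,a+\delta]$; if $x_0 \in [b-\delta, b]$, take $I=[b-\delta,b]$; and otherwise take $I=[x_0-\delta/2, x_0+\delta/2]$. Since $4m^2 \geq 4$ we have $\delta \leq (b-a)$, so in each case $I \subseteq [a,b]$, $x_0 \in I$, and $|I| = \delta$.

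Then, for any $x \in I$, the mean value theorem combined with the derivative bound yields
\[
|p(x) - p(x_0)| \leq |x - x_0| \cdot \max_{[a,b]} |p'| \leq \delta \cdot \frac{2m^2}{b-a} M = \frac{M}{2}.
\]
The reverse triangle inequality then gives $|p(x)| \geq |p(x_0)| - M/2 = M/2$ for every $x \in I$, which is precisely the claimed inequality $\max_{[a,b]} |p|/2 \leq \min_I |p|$.

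This lemma is essentially a direct consequence of the Markov inequality, so I do not expect a serious obstacle. The only point that requires a moment of care is the case analysis for placing $I$ inside $[a,b]$ when $x_0$ is near an endpoint; the choice $\delta = (b-a)/(4m^2)$ (rather than, say, $(b-a)/(2m^2)$) provides the slack that makes the derivative bound yield exactly $M/2$ even when $x_0$ lies at the boundary of $I$.
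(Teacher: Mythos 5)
Your proof is correct, and it is exactly the argument indicated by the paper: apply the Markov bound on $\max_{[a,b]}|p'|$ at the point $x_0$ where $|p|$ attains its maximum, and use the mean value theorem on an interval of length $(b-a)/(4m^2)$ containing $x_0$ to conclude $|p|$ stays above $M/2$ there. The paper simply cites \cite[Lemma~2.10]{PSZ} rather than spelling this out, but your argument is the standard one and the case analysis near the endpoints is handled cleanly.
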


Let $x, y \in \mathbb{R}$ and $C\geq 1$. We use the notation $x\overset{C}{\simeq} y$ if $(1/C)y \leq x \leq Cy$. The following lemma tells that for any polynomial of degree $m \geq 1$, the $L^1$ and $L^2$ norms with respect to the normalized measure is comparable to the supremum norm where the comparable constant only depends on $m$. This immediately follows from Lemma~\ref{nice subinterval}.

\begin{lemma}\label{L1 L2 equivalent}
	Let $p$ be a polynomial of degree $m \geq 1$ and $a<b$. There exists $\tilde{C}:=\tilde{C}(m)$ such that 
	\begin{equation}\label{L1 L2 equivalent equation}
		\max_{x \in [a,b]} |p(x)| \overset{\tilde{C}}{\simeq} \frac{1}{b-a}\int_a^b |p| \overset{\tilde{C}}{\simeq} \Big(\frac{1}{b-a}\int_a^b |p|^2 \Big)^{1/2}.
	\end{equation}
\end{lemma}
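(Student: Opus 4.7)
The plan is to deduce this essentially directly from Lemma~\ref{nice subinterval}, by proving each of the two equivalences in \eqref{L1 L2 equivalent equation} as a pair of one-sided estimates. Write $M := \max_{x\in[a,b]} |p(x)|$. The upper bounds are elementary and hold with constant $1$: bounding $|p|$ by $M$ pointwise gives $\frac{1}{b-a}\int_a^b |p| \leq M$ and $(\frac{1}{b-a}\int_a^b |p|^2)^{1/2} \leq M$, while Cauchy--Schwarz (or Jensen applied to $x \mapsto x^2$) yields $\frac{1}{b-a}\int_a^b |p| \leq (\frac{1}{b-a}\int_a^b |p|^2)^{1/2}$.

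For the matching lower bounds, I would invoke Lemma~\ref{nice subinterval} to produce a subinterval $I \subseteq [a,b]$ with $|I| \geq (b-a)/(4m^2)$ on which $|p(x)| \geq M/2$ for every $x \in I$. Restricting the integrals to $I$ and using this pointwise lower bound then gives
\[
\frac{1}{b-a}\int_a^b |p| \;\geq\; \frac{|I|}{b-a}\cdot \frac{M}{2} \;\geq\; \frac{M}{8m^2},
\]
and likewise
\[
\Big(\frac{1}{b-a}\int_a^b |p|^2\Big)^{1/2} \;\geq\; \Big(\frac{|I|}{b-a}\Big)^{1/2}\cdot \frac{M}{2} \;\geq\; \frac{M}{4m}.
\]
Combining these four inequalities with a common constant (e.g.\ $\tilde{C} := 8m^2$) yields both comparabilities in \eqref{L1 L2 equivalent equation}.

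There is no real obstacle here: the only nontrivial ingredient is Lemma~\ref{nice subinterval}, which has already been established as a consequence of the Markov inequality (Lemma~\ref{Classical Markov inequality}). The value of having Lemma~\ref{L1 L2 equivalent} on record is that later sections need to freely convert between the sup-norm control of Taylor polynomials (which is what the Markov inequality and Whitney estimates naturally deliver) and the integral quantities appearing in the denominator $V_{ij}(a,b)$ of the generalized $A/V$ condition; the explicit dependence of $\tilde{C}$ only on $m$ is what makes these conversions uniform across all subintervals $[a,b]$.
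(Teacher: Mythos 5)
Your argument is exactly the one the paper leaves implicit — the paper states only that the lemma "immediately follows from Lemma~\ref{nice subinterval}," and your proof supplies precisely those details (subinterval where $|p|\ge M/2$, restrict the integrals, combine with the trivial upper bounds). The computations and the choice $\tilde{C}=8m^2$ are correct.
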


Let $(\mathcal{H}, \langle \ \cdot, \ \cdot \ \rangle_{\mathcal{H}})$ be a Hilbert space. The norm induced from the inner product $ \langle \ \cdot, \ \cdot \ \rangle_{\mathcal{H}}$ is denoted by $||\cdot||_{\mathcal{H}}$. For $v, w \in \mathcal{H}$, we use the notation $v \perp w$ if they are orthogonal to each other. For $v_1, \cdots, v_n \in \mathcal{H}$, the linear span of $v_1, \cdots, v_n$ is denoted by $\langle v_1, \cdots, v_n\rangle$. The orthogonal subspace of a subspace $V$ is denoted by $V^{\perp}$. We also use the notation $\text{Pr}_{V}$ for the orthogonal projection onto a closed subspace $V \subseteq \mathcal{H}$. The following are  well known facts from linear algebra and functional analysis, see \cite[Theorem~5.2 and Remark~5 in Subsection~5.2]{B}.
\begin{lemma}\label{projection fact}
	Let $(\mathcal{H}, \langle \ \cdot, \ \cdot \ \rangle_{\mathcal{H}})$ be a Hilbert space and let $V \subseteq \mathcal{H}$ be a closed subspace. Then 
	\begin{enumerate}
	\item if $W$ is a closed subspace and $V \subseteq W$, then 
	\begin{equation}\label{projection property 1}
	\normalfont ||\text{Pr}_{V}(x)||_{\mathcal{H}} \leq ||\text{Pr}_{W}(x)||_{\mathcal{H}}
	\end{equation}
	 for all $x \in \mathcal{H}$,
		\item it holds that $\normalfont \text{Pr}_{V^\perp}=I-\text{Pr}_{V}$ where $I: \mathcal{H} \to \mathcal{H}$ is an identity map,
		\item for all $x \in \mathcal{H}$,  we have that
		\begin{equation}\label{projection fact equation 2}
			\normalfont ||\text{Pr}_{V^\perp}(x)||_{\mathcal{H}}=||(I-\text{Pr}_{V})(x)||_{\mathcal{H}}=\min_{w \in V}||x-w||_{\mathcal{H}},
		\end{equation}
		\item and if $V$ is a finite dimensional linear subspace with an orthogonal basis $(e_i)_{i=1}^{N}$, then we have that for every $x \in \mathcal{H}$,
	\begin{equation}\label{projection fact equation 1}
		\normalfont \text{Pr}_{V}(x)=\sum_{i=1}^N \frac{\langle x, e_i \rangle_{\mathcal{H}}}{||e_i||_{\mathcal{H}}^2}e_i.
	\end{equation}
	\end{enumerate}
\end{lemma}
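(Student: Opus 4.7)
The plan is to derive all four assertions from the orthogonal decomposition theorem: for a closed subspace $V \subseteq \mathcal{H}$, every $x \in \mathcal{H}$ has a unique decomposition $x = v+v'$ with $v \in V$, $v' \in V^\perp$, and this decomposition \emph{defines} $\mathrm{Pr}_V(x) = v$. I will handle (2) and (4) directly from this definition, then deduce (3) by the Pythagorean theorem, and finally obtain (1) by a composition argument.

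For (2), start from the decomposition $x = \mathrm{Pr}_V(x) + (x - \mathrm{Pr}_V(x))$. By definition, $x - \mathrm{Pr}_V(x) \in V^\perp$, and since $V$ is closed we have $V = (V^\perp)^\perp$, so $\mathrm{Pr}_V(x) \in (V^\perp)^\perp$. Hence the pair $(x - \mathrm{Pr}_V(x),\, \mathrm{Pr}_V(x))$ is the unique orthogonal decomposition of $x$ relative to the closed subspace $V^\perp$, which yields $\mathrm{Pr}_{V^\perp}(x) = x - \mathrm{Pr}_V(x) = (I - \mathrm{Pr}_V)(x)$. For (4), set $y := \sum_{i=1}^N \frac{\langle x, e_i\rangle_{\mathcal{H}}}{\|e_i\|_{\mathcal{H}}^2}\, e_i$; clearly $y \in V$, and by the orthogonality of the basis one checks $\langle y, e_j\rangle_{\mathcal{H}} = \langle x, e_j\rangle_{\mathcal{H}}$ for each $j$. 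Thus $x - y$ is orthogonal to each $e_j$ and hence to $V$, so uniqueness of the orthogonal decomposition forces $y = \mathrm{Pr}_V(x)$.

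For (3), the first equality is immediate from (2). For the second, fix any $w \in V$ and decompose
\begin{equation*}
x - w = \bigl(x - \mathrm{Pr}_V(x)\bigr) + \bigl(\mathrm{Pr}_V(x) - w\bigr),
\end{equation*}
where the first summand lies in $V^\perp$ and the second in $V$. The Pythagorean identity gives $\|x-w\|_{\mathcal{H}}^2 = \|x - \mathrm{Pr}_V(x)\|_{\mathcal{H}}^2 + \|\mathrm{Pr}_V(x) - w\|_{\mathcal{H}}^2$, which is minimized precisely when $w = \mathrm{Pr}_V(x)$, yielding $\min_{w \in V}\|x-w\|_{\mathcal{H}} = \|x - \mathrm{Pr}_V(x)\|_{\mathcal{H}} = \|(I-\mathrm{Pr}_V)(x)\|_{\mathcal{H}}$.

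Finally, for (1), use $V \subseteq W$ to infer $W^\perp \subseteq V^\perp$, so $x - \mathrm{Pr}_W(x) \in W^\perp \subseteq V^\perp$. Consequently $\mathrm{Pr}_V\bigl(x - \mathrm{Pr}_W(x)\bigr) = 0$, i.e.\ $\mathrm{Pr}_V(x) = \mathrm{Pr}_V(\mathrm{Pr}_W(x))$. Now decompose
\begin{equation*}
\mathrm{Pr}_W(x) = \mathrm{Pr}_V(\mathrm{Pr}_W(x)) + \bigl(\mathrm{Pr}_W(x) - \mathrm{Pr}_V(\mathrm{Pr}_W(x))\bigr),
\end{equation*}
where the two summands are orthogonal by the definition of $\mathrm{Pr}_V$, so by the Pythagorean theorem $\|\mathrm{Pr}_V(x)\|_{\mathcal{H}}^2 = \|\mathrm{Pr}_V(\mathrm{Pr}_W(x))\|_{\mathcal{H}}^2 \leq \|\mathrm{Pr}_W(x)\|_{\mathcal{H}}^2$. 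There is no real obstacle here, as all four statements are standard consequences of the projection theorem; the only subtle point is invoking $V = (V^\perp)^\perp$ in step (2), which relies on $V$ being closed, a hypothesis explicitly assumed.
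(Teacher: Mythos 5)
Your proof is correct; all four parts follow cleanly from the orthogonal decomposition theorem as you derive them, and the one subtle point — invoking $V = (V^\perp)^\perp$ in step (2), which requires $V$ closed — is handled explicitly. The paper does not prove this lemma itself but cites Br\'ezis (Theorem~5.2 and Remark~5 in Subsection~5.2 of \cite{B}); your self-contained argument is the standard one found there, so there is nothing to compare beyond noting that you have written out what the paper leaves to a reference.
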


Let $[a,b]$ be an interval. In the rest of this section, $L^1(a,b)$ and $L^2(a,b)$ are the $L^1$ and $L^2$ function spaces with respect to the normalized Lebesgue measure. We will use the standard notation $||\cdot||_{L^1(a,b)}$, $||\cdot||_{L^2(a,b)}$ and $\langle \cdot, \cdot \rangle_{L^2(a,b)}$ to denote the $L^1$ and $L^2$ norms and the $L^2$ inner product. 

In the next lemma, for given polynomials $p_1, \cdots, p_r$, we denote by $\tilde{p}_1, \cdots, \tilde{p}_r$ the polynomials obtained by the Gram-Schmidt process in $L^2(a,b)$, i.e., 
	\[
	\tilde{p}_1:=p_1 \ \ \textit{and} \ \ \tilde{p}_i:=p_i- \sum_{k=1}^{i-1} \frac{\langle \tilde{p}_k, p_i \rangle_{L^2(a,b)}}{\langle \tilde{p}_k, \tilde{p}_k \rangle_{L^2(a,b)}} \tilde{p}_k  \ \ \text{for $2 \leq i \leq r$}.
	\]
  We later use the following technical lemma when we construct perturbations in Section~\ref{Sufficiency Gr Section}. 

\begin{lemma}\label{coefficients bounded polynomials}
	Let $r\geq 2$ and $m\geq 1$. There exists $\tilde{C}:=\tilde{C}(r, m)$ such that the following holds: for polynomials  $p_1, \cdots, p_r$ of degree $m$ and $a<b$ satisfying  
	\begin{equation}\label{coefficient bounded assumption 1}
		\int_a^b |p_1| \geq \max_{1 \leq l \leq r} \Big\{ \int_a^b |p_l| \Big\}
	\end{equation}
	and
	\begin{equation}\label{coefficient bounded assumption 2}
		\inf_{c_i \in \mathbb{R}}\int_a^b |p_k-\sum_{i=1}^{k-1}c_i p_i| \geq \max_{k+1\leq l \leq r}\Big\{ \inf_{c_i \in \mathbb{R}} \int_a^b|p_{l}-\sum_{i=1}^{k-1}c_i p_i| \Big\}
	\end{equation}
	for every $2\leq k\leq r-1$, we have
	\begin{equation}\label{coefficients bounded equation}
		\frac{\langle \tilde{p}_i, p_l \rangle_{L^2(a,b)}}{\langle \tilde{p}_i, \tilde{p}_i \rangle_{L^2(a,b)}} \leq \tilde{C}
	\end{equation}
	for every $1\leq i<l\leq r$.
	In particular, there exist constants $\tilde{C}_1:= \tilde{C}_1(m)$ and $\tilde{C}_2:=\tilde{C}_2(r, m)$ such that 
	\begin{equation}\label{inf reduction}
		\inf_{c_i\in \mathbb{R}}\int_a^b |p_l-\sum_{i=1}^{k}c_i p_i| \overset{\tilde{C}_1}{\simeq} \inf_{c_i\in A}\int_a^b|p_l-\sum_{i=1}^{k}c_i p_i|
	\end{equation}
	for every $1\leq k< l \leq r$ where $A:=[-\tilde{C}_2, \tilde{C}_2]$.
\end{lemma}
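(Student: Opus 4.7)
My strategy is to interpret the quantities in~\eqref{coefficients bounded equation} geometrically inside the Hilbert space $L^2(a,b)$, and then transfer between $L^1$ and $L^2$ using Lemma~\ref{L1 L2 equivalent}. Set $W_1 := \{0\}$ and $W_k := \langle p_1, \ldots, p_{k-1}\rangle$ for $2 \leq k \leq r$, and write $\alpha_i := \langle \tilde{p}_i, p_l\rangle_{L^2(a,b)}/\|\tilde{p}_i\|_{L^2(a,b)}^2$ for $1 \leq i < l \leq r$. Since $W_{i+1} = W_i \oplus \langle \tilde{p}_i\rangle$ is an orthogonal decomposition, one obtains
\[
\alpha_i \tilde{p}_i = \text{Pr}_{W_{i+1}}(p_l) - \text{Pr}_{W_i}(p_l) = \text{Pr}_{\langle \tilde{p}_i\rangle}\bigl(p_l - \text{Pr}_{W_i}(p_l)\bigr).
\]
Taking $L^2(a,b)$-norms and applying Lemma~\ref{projection fact}(3) yields the key bound
\[
|\alpha_i|\, \|\tilde{p}_i\|_{L^2(a,b)} \leq \dist_{L^2(a,b)}(p_l, W_i).
\]

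To control the right-hand side, I would invoke Lemma~\ref{L1 L2 equivalent}, which states that $(b-a)^{-1}\int_a^b |q|$ and $\bigl((b-a)^{-1}\int_a^b q^2\bigr)^{1/2}$ are comparable with constants depending only on $m$, for every polynomial $q$ of degree $\leq m$. Applied to the residuals $p_l - \sum_{j=1}^{i-1} c_j p_j$, this equivalence shows that the $L^1$-distance and $L^2$-distance from $p_l$ to $W_i$ agree up to a multiplicative constant depending only on $m$. For $i = 1$, hypothesis~\eqref{coefficient bounded assumption 1} then yields $\|p_l\|_{L^2(a,b)} \leq \tilde{C}(m)\,\|p_1\|_{L^2(a,b)} = \tilde{C}(m)\,\|\tilde{p}_1\|_{L^2(a,b)}$; for $2 \leq i \leq r-1$, hypothesis~\eqref{coefficient bounded assumption 2} gives $\dist_{L^2(a,b)}(p_l, W_i) \leq \tilde{C}(m)\,\dist_{L^2(a,b)}(p_i, W_i) = \tilde{C}(m)\,\|\tilde{p}_i\|_{L^2(a,b)}$. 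Combined with the key bound, this proves~\eqref{coefficients bounded equation}.

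For the consequence~\eqref{inf reduction}, the inequality $\inf_{c_i \in \mathbb{R}} \leq \inf_{c_i \in A}$ is trivial. For the reverse direction, I would take the $L^2$-best approximator $\text{Pr}_{W_{k+1}}(p_l) = \sum_{j=1}^{k} \alpha_j \tilde{p}_j$ and re-express it in the $p_i$-basis as $\sum_{i=1}^{k} \beta_i p_i$. A finite induction inverting the Gram--Schmidt recursion $\tilde{p}_j = p_j - \sum_{k<j}(\langle \tilde{p}_k, p_j\rangle/\|\tilde{p}_k\|^2)\tilde{p}_k$, using~\eqref{coefficients bounded equation} at each of the at most $r$ steps, expresses each $\tilde{p}_j$ as a linear combination of $p_1,\ldots,p_j$ whose coefficients are uniformly bounded in terms of $r$ and $m$; this shows that $|\beta_i| \leq \tilde{C}_2(r,m)$, i.e.\ $(\beta_1, \ldots, \beta_k) \in A^k$. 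One more application of the $L^1/L^2$ equivalence to the residual $p_l - \sum_{i=1}^k \beta_i p_i$ (a polynomial of degree $\leq m$) then converts the $L^2$-best approximation error into the $L^1$-best approximation error, yielding~\eqref{inf reduction} with a constant $\tilde{C}_1(m)$.

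The main obstacle I anticipate is the bookkeeping in the change-of-basis step: one must verify carefully that inverting the Gram--Schmidt recursion to pass from the $\alpha_j$-coefficients to the $\beta_i$-coefficients produces bounds depending \emph{only} on $r$ and $m$, not on the interval $[a,b]$ or on the specific polynomials. This requires unwinding the recursion one step at a time while keeping the uniform bound~\eqref{coefficients bounded equation} propagating through the finite induction.
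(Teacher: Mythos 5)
Your proof is correct, and for the first conclusion \eqref{coefficients bounded equation} it takes a genuinely cleaner route than the paper's. The paper establishes \eqref{coefficients bounded equation} by an interlocking bootstrap: it first shows that \eqref{inf reduction} would follow once \eqref{coefficients bounded equation} is known, and then proves \eqref{coefficients bounded equation} by a finite induction whose step at level $k+1$ passes through the restricted infimum $\inf_{c_i\in A}\|p_l-\sum_{i=1}^{k}c_ip_i\|_{L^1(a,b)}$ and invokes a version of \eqref{inf reduction} at level $k$ (what the paper labels \eqref{coefficient proof 1}, justified by the inductive hypothesis). You short-circuit this by a single observation: since every residual $p_l-\sum_j c_jp_j$ is a polynomial of degree at most $m$, applying Lemma~\ref{L1 L2 equivalent} at the $L^1$- and $L^2$-minimizers shows that the $L^1$- and $L^2$-distances from $p_l$ to $W_i$ are comparable with a constant depending only on $m$, with no need to restrict the coefficients $c_j$ to any compact set. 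Combined with your projection bound $|\alpha_i|\,\|\tilde p_i\|_{L^2(a,b)}\leq \dist_{L^2(a,b)}(p_l,W_i)$ and the hypotheses \eqref{coefficient bounded assumption 1}--\eqref{coefficient bounded assumption 2}, this yields \eqref{coefficients bounded equation} directly, with no induction over $i$ and no preliminary form of \eqref{inf reduction}; the $i=1$ case reduces to Cauchy--Schwarz exactly as in the paper. Your derivation of \eqref{inf reduction} (expanding $\mathrm{Pr}_{W_{k+1}}(p_l)$ in the $p_i$-basis and bounding the resulting coefficients by inverting the Gram--Schmidt recursion with \eqref{coefficients bounded equation}) is essentially the paper's argument. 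The change-of-basis bookkeeping you flag as the main remaining obstacle is a routine finite recursion that the paper also leaves implicit (``we used the bound \eqref{coefficients bounded equation} repeatedly''), so this is not a gap relative to the paper's own level of detail.
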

\begin{proof}
Let $1\leq k< l \leq r$ as in the statement. We first prove \eqref{inf reduction} for $1 \leq k <l \leq r$ under the assumption that \eqref{coefficients bounded equation} holds for all $1 \leq i \leq k$.  By Lemma~\ref{L1 L2 equivalent}, it is enough to establish
\begin{equation}
	\inf_{c_i\in \mathbb{R}}\Big(\frac{1}{b-a}\int_a^b |p_l-\sum_{i=1}^{k}c_i p_i|^2\Big)^{1/2} = \inf_{c_i\in A}\Big( \frac{1}{b-a} \int_a^b|p_l-\sum_{i=1}^{k}c_i p_i|^2\Big)^{1/2}.
\end{equation}
Note that 
\begin{align}
\inf_{c_i\in \mathbb{R}}\Big(\frac{1}{b-a}\int_a^b |p_l-\sum_{i=1}^{k}c_i p_i|^2\Big)^{1/2} \overset{\eqref{projection fact equation 2}}&{=}||\text{Pr}_{\langle p_1, \cdots, p_{k-1} \rangle^{\perp}}(p_l)||_{L^2(a,b)} \notag \\
	\overset{\eqref{projection fact equation 1}}&{=}||p_l-\sum_{i=1}^{k}\frac{\langle p_l, \tilde{p}_i \rangle}{\langle \tilde{p}_i, \tilde{p}_i \rangle}\tilde{p}_i ||_{L^2(a,b)}\notag \\
	&\geq \inf_{c_i \in A}||p_l-\sum_{i=1}^{k}c_i p_i||_{L^2(a, b)}.
\end{align}
where we used the bound \eqref{coefficients bounded equation} repeatedly to obtain a constant $\tilde{C}_2$ for which the last inequality holds. Since the converse inequality is obvious, this completes the proof of \eqref{inf reduction}.

We use mathematical induction to verify \eqref{coefficients bounded equation}. We first prove that \eqref{coefficients bounded equation} holds true for any pair $i=1< l\leq r$.
	By the Cauchy-Schwarz inequality and Lemma~\ref{L1 L2 equivalent}, we have
	\begin{align}
		\Big|\langle p_1, p_l \rangle_{L^2(a,b)}\Big| &\leq  ||p_1||_{L^2(a, b)}||p_l||_{L^2(a,b)} \overset{\eqref{L1 L2 equivalent equation}}{\leq} \tilde{C}||p_1||_{L^2(a, b)}||p_l||_{L^1(a,b)} \notag \\
		\overset{\eqref{coefficient bounded assumption 1}}&{\leq} \tilde{C}||p_1||_{L^2(a, b)}||p_1||_{L^1(a,b)} \overset{\eqref{L1 L2 equivalent equation}}{\leq} \tilde{C}||p_1||_{L^2(a, b)}||p_1||_{L^2(a,b)}
	\end{align}
	Hence we established \eqref{coefficients bounded equation} when $i=1<l\leq r$. 
	
Fix $k \geq 2$ and we now assume that 
\begin{equation}
		\frac{\langle \tilde{p}_i, p_l \rangle_{L^2(a,b)}}{\langle \tilde{p}_i, \tilde{p}_i \rangle_{L^2(a,b)}} \leq \tilde{C}
	\end{equation}
	for any $1\leq i \leq k<l \leq r$. Our goal is to prove that 
	\begin{equation}
		\frac{\langle \tilde{p}_{k+1}, p_l \rangle_{L^2(a,b)}}{\langle \tilde{p}_{k+1}, \tilde{p}_{k+1} \rangle_{L^2(a,b)}} \leq \tilde{C}
	\end{equation}
	for every $k+1<l\leq r$. We first note that from the proof of \eqref{inf reduction} and the assumption of mathematical induction, we know that 
	\begin{equation}\label{coefficient proof 1}
		\inf_{c_i \in A}||p_l - \sum_{i=1}^{k} c_i p_i||_{L^1(a,b)}\overset{\tilde{C}}{\simeq}\inf_{c_i \in \mathbb{R}}||p_l - \sum_{i=1}^{k} c_i p_i||_{L^1(a,b)}
	\end{equation}
	and 
	for any $1\leq k <l \leq r$.
	The fact that  $\tilde{p}_{k+1} \in \langle \tilde{p}_1, \cdots, \tilde{p}_k \rangle^{\perp}$ together with \eqref{coefficient bounded assumption 2} and \eqref{coefficient proof 1} gives us that
	\begin{align}
		||\frac{\langle \tilde{p}_{k+1}, p_l \rangle}{\langle  \tilde{p}_{k+1}, \tilde{p}_{k+1} \rangle} \tilde{p}_{k+1}||_{L^2(a, b)}
		\overset{\eqref{projection fact equation 1}}&{=} ||\text{Pr}_{\langle \tilde{p}_{k+1} \rangle}(p_l)||_{L^2(a,b)} \overset{\eqref{projection property 1}}{\leq} ||\text{Pr}_{\langle \tilde{p}_1, \cdots, \tilde{p}_k \rangle^{\perp}}(p_l)||_{L^2(a,b)} \notag \\
		\overset{\eqref{projection fact equation 2}}&{=} \inf_{c_i \in \mathbb{R}}||p_l - \sum_{i=1}^{k} c_i \tilde{p}_i||_{L^2(a,b)} 
		\leq \inf_{c_i \in A}||p_l - \sum_{i=1}^{k} c_i p_i||_{L^2(a,b)} \notag \\
		\overset{\eqref{L1 L2 equivalent equation}}&{\leq} \tilde{C}\inf_{c_i \in A}||p_l - \sum_{i=1}^{k} c_i p_i||_{L^1(a,b)}   \overset{\eqref{coefficient proof 1}}{\leq}  \tilde{C}\inf_{c_i \in \mathbb{R}}||p_l - \sum_{i=1}^{k} c_i p_i||_{L^1(a,b)} \notag \\
		\overset{\eqref{coefficient bounded assumption 2}}&{\leq} \tilde{C}\inf_{c_i \in \mathbb{R}}||p_{k+1} - \sum_{i=1}^{k} c_i p_i||_{L^1(a,b)} \overset{\eqref{L1 L2 equivalent equation}}{\leq}  \tilde{C} \inf_{c_i \in \mathbb{R}}||p_{k+1} - \sum_{i=1}^{k} c_i p_i||_{L^2(a,b)}\notag \\
		&=\tilde{C} ||\tilde{p}_{k+1}||_{L^2(a,b)}. \notag 
	\end{align}
	 This completes the proof.
\end{proof}

\section{A Natural Generalization and a Counterexample}\label{Counterexample subsection}
\subsection{A Natural Question Motivated by $\mathbb{G}_2$}
In order to establish a $C^m$ Whitney extension theorem for horizontal curves in $\mathbb{G}_r$, we generalize the quantities \eqref{Aab G2} and \eqref{Vab G2} from the setting of $\mathbb{G}_2$. Let $(F_{i})_{1\leq i \leq r}$ and $(F_{ij})_{1\leq j<i\leq r}$ be jets of order $m$ on a compact set $K \subseteq \mathbb{R}$. For every $1 \leq j<i \leq r$ and $a, b \in K$, we define
\begin{align}\label{Aab}
 A_{ij}(a,b)&:=F_{ij}(b)-F_{ij}(a)-\frac{1}{2}\int_{a}^{b}((T_{a}^m F_i)(T_{a}^m F_j)'-(T_{a}^m F_i)'(T_{a}^mF_j))\\ \nonumber 
&\qquad +\frac{1}{2}F_j(a)(F_i(b)-T_{a}^mF_i(b))-\frac{1}{2}F_i(a)(F_j(b)-T_{a}^mF_j(b))
\end{align}
and
\begin{equation}\label{Vab}
V_{ij}(a,b):= (b-a)^{2m} + (b-a)^{m} \int_{a}^{b}|(T_{a}^mF_{i})'|+|(T_{a}^m F_{j})'|.
\end{equation}

This leads to the following natural question. 
\begin{question}\label{a natural question from G3}
 Let $K \subseteq \mathbb{R}$ be a compact set and $(F_{i})_{1\leq i \leq r}$ and $(F_{ij})_{1\leq j<i\leq r}$ be jets of order $m$ on $K$. 
Then is it true that there exists a $C^m$ horizontal curve $\gamma : \mathbb{R} \to \mathbb{G}_r$ such that $D^k \gamma_{i}|_{K}=F^k_{i}$ for every $0 \leq k \leq m$ and $1\leq i \leq r$ and $D^k \gamma_{ij}|_{K}=F^k_{ij}$ for every $0 \leq k \leq m$ and $1 \leq j<i\leq r$ if and only if
\begin{enumerate}
\item $(F_{i})_{i=1}^{r}$ and $(F_{ij})_{1\leq j<i\leq r}$ are Whitney fields of class $C^m$ on $K$,
\item for every $1 \leq k \leq m$, $1\leq j < i \leq r$, and $t\in K$ we have
 \begin{equation}
\label{HorizAssume}
F_{ij}^{k}(t) = \mathcal{P}^k \left(F_{i}^0(t),F_{j}^0(t),F_{i}^1(t),F_{j}^1(t),\dots,F_{i}^{k}(t),F_{j}^{k}(t)\right),
\end{equation}
where $\mathcal{P}^k$ are the polynomials defined in \eqref{def of polynomials},
\item and the component wise $A/V$ condition holds, i.e., 
\begin{equation}\label{component wise A/V}
	A_{ij}(a,b)/V_{ij}(a,b)\to 0
\end{equation}
 uniformly as $(b-a) \searrow 0$ with $a,b\in K$ for all $1\leq j<i \leq r$?

\end{enumerate}
\end{question}

\subsection{A Counterexample in $\mathbb{G}_3$}
The answer to Question~\ref{a natural question from G3} is negative. The following example in $\mathbb{G}_3$ for $m=2$ was suggested by Pinamonti and Speight.
\begin{proposition}\label{Example}
	There exist a compact set $K \subseteq \mathbb{R}$ and jets $(F_{i})_{1\leq i \leq 3}$ and $(F_{ij})_{1\leq j<i\leq 3}$ of order $2$ on $K$ such that 
	\begin{enumerate}
\item $(F_{i})_{1\leq i \leq 3}$ and $(F_{ij})_{1\leq j<i\leq 3}$ are Whitney fields of class $C^2$ on $K$,
\item for every $1 \leq k \leq 2$, $1 \leq j<i \leq 3$ and $t\in K$ we have
 \begin{equation}
\label{HorizAssume}
F_{ij}^{k}(t) = \mathcal{P}^k \left(F_{i}^0(t),F_{j}^0(t),F_{i}^1(t),F_{j}^1(t),\dots,F_{i}^{k}(t),F_{j}^{k}(t)\right)
\end{equation}
where $\mathcal{P}^k$ are the polynomials defined in \eqref{def of polynomials},
\item and for all $1\leq j<i \leq 3$, $A_{ij}(a,b)/V_{ij}(a,b)\to 0$ uniformly as $(b-a) \searrow 0$ with $a,b\in K$,
\end{enumerate}
but there is no $C^2$ horizontal curve that extends $(F_{i})_{1\leq i \leq 3}$ and $(F_{ij})_{1\leq j<i\leq 3}$.
\end{proposition}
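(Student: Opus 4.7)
For $m=2$ I propose an explicit counterexample that exploits the tension between componentwise $A_{ij}/V_{ij}$ control and a rigid linear relation that any $C^2$ horizontal extension must satisfy among three vertical area increments once the three horizontal components are exactly proportional at jet level on $K$.

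\textbf{Construction.} Take $K := \{0\} \cup \{a_n, b_n\}_{n \geq 1}$ with $a_n := 1/n$ and $b_n := a_n + \eps_n$, $\eps_n := 1/n^K$ for a large integer $K$. Prescribe horizontal $2$-jets by choosing $F_1^j(a_n)$ with polynomial decay (e.g., $F_1^0(a_n) = F_1^2(a_n) = 0$ and $F_1^1(a_n) := 1/(n \log n)$), and setting $F_2^j(a_n) := F_1^j(a_n)$, $F_3^j(a_n) := 2 F_1^j(a_n)$, and $F_i^j(b_n) := (T_{a_n}^2 F_i)^{(j)}(b_n)$. By the bilinearity and antisymmetry of $\mathcal P^k$, the exact proportionality forces $F_{ij}^k(a_n) = F_{ij}^k(b_n) = 0$ for $k = 1, 2$ and every pair $j < i$; set $F_{ij}^0(a_n) := 0$ and let $\delta_{ij}^n := F_{ij}^0(b_n)$ be free scalars to be chosen.

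\textbf{Verification of (1)--(3).} Condition (2) holds because both sides vanish by the cancellations above. Condition (1) reduces to standard polynomial Whitney estimates: the spacing $a_n - a_{n+1} \sim 1/n^2$ dominates $\eps_n$ for $K \geq 3$, which makes the cross-block checks routine. For (3), a direct calculation using proportionality shows that the integral part of $A_{ij}(a_n, b_n)$ vanishes and the boundary corrections vanish (since $F_i(b_n) = T_{a_n}^2 F_i(b_n)$), giving $A_{ij}(a_n, b_n) = \delta_{ij}^n$; meanwhile $V_{ij}(a_n, b_n) \asymp \eps_n^3 F_1'(a_n)$. Setting $\delta_{ij}^n := \alpha_n V_{ij}(a_n, b_n)$ with $\alpha_n := 1/n$ delivers (3) on same-block pairs, and cross-block pairs yield strictly smaller ratios by analogous direct estimates.

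\textbf{Obstruction.} Suppose $\gamma \in C^2(\mathbb R, \mathbb G_3)$ horizontally extends the data. Setting $\eta_2 := \gamma_2 - \gamma_1$ and $\eta_3 := \gamma_3 - 2\gamma_1$, both $\eta_2, \eta_3$ have vanishing $2$-jet at every $a_n$ and $b_n$. Substituting $\gamma_2 = \gamma_1 + \eta_2$ and $\gamma_3 = 2\gamma_1 + \eta_3$ into $\gamma_{ij}' = \tfrac12(\gamma_i \gamma_j' - \gamma_i' \gamma_j)$ and integrating over $[a_n, b_n]$, the linear-in-$\eta_i$ contributions to $\delta_{32}^n$ recombine into $-2\delta_{21}^n + \delta_{31}^n$, leaving the purely bilinear residual
\[
\delta_{32}^n + 2 \delta_{21}^n - \delta_{31}^n \;=\; \tfrac{1}{2} \int_{a_n}^{b_n} (\eta_3 \eta_2' - \eta_3' \eta_2).
\]
Taylor's theorem at $a_n$ with $\eta_i(a_n) = \eta_i'(a_n) = 0$ gives $\sup_{[a_n, b_n]} |\eta_i| \leq \|\gamma\|_{C^2} \eps_n^2$ and $\sup |\eta_i'| \leq \|\gamma\|_{C^2} \eps_n$, so the right-hand side is $O(\|\gamma\|_{C^2}^2 \eps_n^4)$. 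Now take $\delta_{21}^n = \delta_{31}^n = 0$ and $\delta_{32}^n = \alpha_n V_{32}(a_n, b_n)$. Since $V_{32}/\eps_n^4 \asymp F_1'(a_n)/\eps_n \to \infty$ at polynomial rate (for $K \geq 3$), the left-hand side eventually dominates the right-hand side for any fixed $\gamma$, contradicting the identity.

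\textbf{Main obstacle.} The crux is the \emph{linear cancellation}: because $F_2 = F_1$ and $F_3 = 2 F_1$ at jet level on $K$, the linear-in-$\eta_i$ parts of the three formulas for $\delta_{ij}^n$ reassemble into the single combination $\delta_{32} + 2\delta_{21} - \delta_{31}$, leaving only a bilinear remainder of order $\eps_n^4$ --- one power of $\eps_n$ smaller than the componentwise $A/V$ slack $\alpha_n V_{ij} \asymp \alpha_n \eps_n^3 F_1'(a_n)$. Identifying and exploiting this cancellation is the heart of the construction; thereafter, the joint calibration of $a_n$, $\eps_n$, $F_1'(a_n)$, and $\alpha_n$ to open up a definite gap while preserving all Whitney decay requirements is a routine but delicate polynomial-scale bookkeeping.
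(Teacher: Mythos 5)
Your construction follows the same key idea as the paper's counterexample: exploit a jet-level linear dependence among the horizontal components so that a specific linear combination of vertical area increments is forced to be $O(\eps^{2m})$ by any $C^m$ extension, while the componentwise $A/V$ slack permits a larger prescribed value. The paper implements this with $F_2 = 0$ and $F_1 + F_3 = 0$ on a union of intervals accumulating at $1$, you with $F_2 = F_1$ and $F_3 = 2F_1$ on a discrete set accumulating at $0$; these are different packagings of the same cancellation.

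However, there is a concrete gap. Your example choice $F_1^1(a_n) := 1/(n\log n)$ does \emph{not} give a Whitney field of class $C^2$ for $F_1$. With $F_1^0 \equiv 0$ and $F_1^2 \equiv 0$, the remainder for the pair $(a_n, a_{n+1})$ is
\[
(R^2_{a_n}F_1)^0(a_{n+1}) \;=\; -F_1^1(a_n)\,(a_{n+1}-a_n),
\qquad
\frac{|(R^2_{a_n}F_1)^0(a_{n+1})|}{|a_{n+1}-a_n|^2} \;=\; \frac{|F_1^1(a_n)|}{|a_{n+1}-a_n|} \;\sim\; \frac{n}{\log n}\;\to\;\infty,
\]
so Definition~\ref{def of whitney fields} fails for $F_1$. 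Your remark that ``the spacing $a_n - a_{n+1} \sim 1/n^2$ dominates $\eps_n$ for $K \geq 3$'' addresses a different (and easier) constraint; the actual bottleneck is that $F_1^1(a_n)$ must decay faster than the spacing $\sim 1/n^2$, i.e.\ one needs $F_1^1(a_n) = o(1/n^2)$. The construction is salvageable: take, say, $F_1^1(a_n) = 1/n^p$ with $p > 2$, and then $\eps_n = 1/n^{K}$ with $K$ large enough that $V_{32} \asymp F_1^1(a_n)\eps_n^3 \sim 1/n^{3K+p}$, the prescribed $\delta_{32}^n = \alpha_n V_{32} \sim 1/n^{3K+p+1}$ still dominates the $o(\eps_n^{4}) = o(1/n^{4K})$ bilinear remainder, which forces $K > p+1$; for instance $p=3$, $K=5$ works and keeps all the Whitney estimates (including the cross-block ones and the behaviour at $0$) satisfied. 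As written, though, the stated parameters contradict condition (1) of the proposition, and this should be corrected before the obstruction step can be invoked.
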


\begin{proof}
	 Set
	\[
	\lambda_n=10^{-n} \ \ \text{and} \ \ K:=\cup_{n=1}^{\infty}[c_n, d_n]\cup\{1\}
	\] 
	 where $c_n:=1-2^{-n}$ and $d_n:=1-\frac{3}{4}2^{-n}$. We first define jets of order $2$ on $K$ by
	 \begin{equation}
	\begin{pmatrix}F^0_1(t) \\ F^1_1(t) \\ F^2_1(t) \end{pmatrix}  := \begin{pmatrix} t \\ 1 \\ 0 \end{pmatrix}
    , \ \ \ 
     \begin{pmatrix}F^0_2(t) \\ F^1_2(t) \\ F^2_2(t) \end{pmatrix}  := \begin{pmatrix} 0 \\ 0 \\ 0 \end{pmatrix}
    , \ \ \ 
    \begin{pmatrix}F^0_3(t) \\ F^1_3(t) \\ F^2_3(t) \end{pmatrix}  := \begin{pmatrix} -t \\ -1 \\ 0 \end{pmatrix}
     \notag 
\end{equation}
and 
	 \begin{equation}
	 \begin{pmatrix}F^0_{21}(t) \\ F^1_{21}(t) \\ F^2_{21}(t) \end{pmatrix}  := \begin{pmatrix} \sum_{n=1}^{\infty}\lambda_n \mathds{1}_{[c_n, d_n]}(t) \\ 0 \\ 0 \end{pmatrix}
    , \ \ \ 
      \begin{pmatrix}F^0_{31}(t) \\ F^1_{31}(t) \\ F^2_{31}(t) \end{pmatrix}  := \begin{pmatrix} 0 \\ 0 \\ 0 \end{pmatrix}
     ,\ \ \ 
     \begin{pmatrix}F^0_{32}(t) \\ F^1_{32}(t) \\ F^2_{32}(t) \end{pmatrix}  := \begin{pmatrix} 0 \\ 0 \\ 0 \end{pmatrix}.
     \notag
\end{equation}
 where $\mathds{1}_{[c_n, d_n]}$ is the characteristic function of $[c_n, d_n]$.
We only check that $F_{21}$ is a Whitney field since the others are obvious. If $a \in [c_n, d_n]$ and $b \in [c_m, d_m]$ with $n<m$, we have
\begin{align}\label{condition on lambda 1}
	\frac{|F_{21}(b)-F_{21}(a)|}{(b-a)^2} &\leq \frac{|1/10^{m}-1/10^{n}|}{(d_n-c_m)^2} \notag \\
	&=16\frac{|10^m-10^n|}{(3(2^m)-2^n)^2}\frac{4^{n+m}}{10^{n+m}} \notag \\
	&=16\frac{1-10^{n-m}}{(3-2^{n-m})^2}\Big(\frac{4}{10}\Big)^{n} \to 0,
\end{align}
as $m,n \to \infty$.

Next we check the $A/V$ condition for $F_{21}$. 
Let $a \in [c_n, d_n]$ and $b \in [c_m, d_m]$ with $n<m$. It is not hard to check that
\[
A_{21}(a, b)=F_{21}(b)-F_{21}(a) \ \ \text{and} \ \ V_{21}(a, b)=(b-a)^{4}+(b-a)^3.
\]
By the similar computation as in \eqref{condition on lambda 1}, we have
\begin{equation}
	A_{21}(a, b)/V_{21}(a, b) \leq 16\frac{1-10^{n-m}}{(3-2^{n-m})^3}\Big(\frac{8}{10}\Big)^{n} \to 0.
\end{equation}
as $n, m \to \infty$. Other components satisfy the $A/V$ condition. Therefore, the jets defined above satisfies all the conditions.

We now suppose that there exists a $C^2$ horizontal curve $\gamma=(\gamma_1, \gamma_2, \gamma_3, \gamma_{21}, \gamma_{31}, \gamma_{32})$ which extends the above jets. For any $\epsilon>0$, there exists $\delta>0$ such that if $|s-1|<\delta$, then the Taylor polynomials of $\gamma_i$ and $\gamma_i'$ at $1$ gives 
\begin{equation}\label{counterexample estimate 1}
|\gamma_i(s)-(\gamma_i(1)+\gamma_i'(1)(s-1))| \leq \epsilon|s-1|^2 \ \text{and} \ |\gamma_i'(s)-\gamma_i'(1)| \leq \epsilon|s-1|
\end{equation}
for $i=1, 3$ and 
\begin{equation}\label{counterexample estimate 2}
	|\gamma_2(s)| \leq \epsilon |s-1|^2 \ \text{and} \ |\gamma_2'(s)|\leq \epsilon |s-1|.
\end{equation}
 If $a \in [c_n, d_n]$ and $1-\delta<a$, then 
\begin{align}
	|\lambda_n| &= |\gamma_{21}(a)-\gamma_{32}(a)| \notag \\
	&= \Big| \Big(\gamma_{21}(a)-\gamma_{21}(1)\Big)-\Big(\gamma_{32}(a) - \gamma_{32}(1)\Big)\Big| \notag \\
	&=\frac{1}{2}\Big|\int_a^{1}(\gamma_1\gamma_2'-\gamma_2\gamma_1')-(\gamma_2\gamma_3'-\gamma_3\gamma_2')\, ds \Big| \notag \\
	&= \frac{1}{2}\Big|\int_a^{1} \gamma_2(-\gamma_1'-\gamma_3')+\gamma_2'(\gamma_1+\gamma_3)\Big| \notag \\
	&\leq  \frac{1}{2}\int_a^{1} |\gamma_2(-\gamma_1'-1+1-\gamma_3')+\gamma_2'(\gamma_1-(1+s-1)+(1+s-1)+\gamma_3)| \notag \\
	\overset{\eqref{counterexample estimate 1}, \eqref{counterexample estimate 2}}&{\leq} \frac{1}{2}\epsilon^2 (1-a)^4 \leq  \frac{1}{2}\epsilon^2 16^{-n}.
\end{align}
This leads to a contradiction for large enough $n \in \mathbb{N}$ since $|\lambda_n|=10^{-n}$. Therefore, the jets in this example cannot have a $C^2$ horizontal extension.\end{proof}
\begin{remark}
	The jets considered above have a $C^1$ horizontal extension. The existence of a $C^1$ horizontal extension follows by checking the condition in  \cite[Definition~2.3]{JS} in $\mathbb{G}_3$.
\end{remark}

\section{Necessity: The Case $\mathbb{G}_3$}\label{Necessity G3 Section}
\subsection{The Generalized $A/V$ Condition}

From  Proposition \ref{Example}, we observed that the component-wise $A/V$ condition generalized from \cite{PSZ} is not enough to prove a $C^{m}$ Whitney extension result for horizontal curves in $\mathbb{G}_r$. We first restrict our attention to $\mathbb{G}_3$. Here we introduce a generalized $A/V$ condition. 
\begin{definition}\label{Definition of generalized A/V}
	Let $(F_i)_{1 \leq i \leq 3}$ and $(F_{ij})_{1\leq j<i\leq 3}$ be jets of order $m$ on $K$. We say that $(F_i)_{1 \leq i \leq 3}$ and $(F_{ij})_{1\leq j<i\leq 3}$ satisfy the \emph{generalized $A/V$ condition} if for any compact set $A \subseteq \mathbb{R}$, the following hold:
	\begin{equation}\label{key11}
		\limsup_{\substack{ |a-b|\to 0 \\ a, b \in K}}\sup_{c_2, c_3 \in A}\frac{|c_{3}A_{21}(a,b)+A_{32}(a,b)-c_2A_{31}(a,b)|}{(b-a)^{2m}+(b-a)^m\int_a^b|T_aF_2'-c_2T_aF_1'|+|T_aF_3'-c_3T_aF_1'|}=0, 
	\end{equation}
	\begin{equation}\label{key22}
		\limsup_{\substack{ |a-b|\to 0 \\ a, b \in K}}\sup_{c_1, c_3 \in A}\frac{|-c_{3}A_{21}(a,b)-c_1A_{32}(a,b)+A_{31}(a,b)|}{(b-a)^{2m}+(b-a)^m\int_a^b|T_aF_1'-c_2T_aF_2'|+|T_aF_3'-c_3T_aF_2'|}=0 ,
	\end{equation}
	and
	\begin{equation}\label{key33}
		\limsup_{\substack{ |a-b|\to 0 \\ a, b \in K}}\sup_{c_1, c_2 \in A}\frac{|A_{21}(a,b)+c_1A_{32}(a,b)-c_2A_{31}(a,b)|}{(b-a)^{2m}+(b-a)^m\int_a^b|T_aF_1'-c_1T_aF_3'|+|T_aF_2'-c_2T_aF_3'|} =0.
	\end{equation}
	\end{definition}
	
	\begin{remark}
	Note that \eqref{key11}, \eqref{key22}, \eqref{key33} recovers the component wise $A/V$ condition \eqref{component wise A/V} by plugging $c_i=0$ for $i=1, 2, 3$.
\end{remark}

	\subsection{Our Main Theorem in $\mathbb{G}_3$}
Our main result in the case of $\mathbb{G}_3$ is the following.
\begin{theorem}\label{Cm Whitney G3}
	Let $K \subseteq \mathbb{R}$ be a compact set and $(F_{i})_{1\leq i \leq 3}$ and $(F_{ij})_{1\leq j<i\leq 3}$ be jets of order $m$ on $K$. 
Then there exists a $C^m$ horizontal curve $\gamma : \mathbb{R} \to \mathbb{G}_3$ such that for each $0\leq k \leq m$, $D^k \gamma_{i}|_{K}=F^k_{i}$ for all $1 \leq i \leq 3$ and $D^k \gamma_{ij}|_{K}=F^k_{ij}$ for all $1 \leq j <i\leq 3$ if and only if
\begin{enumerate}
\item $(F_{i})_{1 \leq i \leq 3}$ and $(F_{ij})_{1\leq j<i\leq 3}$ are Whitney fields of class $C^m$ on $K$,
\item for every $1 \leq k \leq m$ and $t\in K$ and all $1 \leq j <i\leq 3$ we have
 \begin{equation}
\label{HorizAssume}
F_{ij}^{k}(t) = \mathcal{P}^k \left(F_{i}^0(t),F_{j}^0(t),F_{i}^1(t),F_{j}^1(t),\dots,F_{i}^{k}(t),F_{j}^{k}(t)\right)
\end{equation}
where $\mathcal{P}^k$ are the polynomials defined in \eqref{def of polynomials},
\item  and the jets $(F_{i})_{1 \leq i \leq 3}$ and $(F_{ij})_{1\leq j<i\leq 3}$ satisfy the generalized $A/V$ condition.
\end{enumerate}
\end{theorem}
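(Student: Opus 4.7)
The plan is to prove the two implications separately.

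For necessity, suppose $\gamma = (\gamma_1, \gamma_2, \gamma_3, \gamma_{21}, \gamma_{31}, \gamma_{32}) \colon \mathbb{R} \to \mathbb{G}_3$ is a $C^m$ horizontal extension of the given jets. Condition~(1) is a consequence of Taylor's theorem applied to each $C^m$ coordinate function, and condition~(2) is exactly Lemma~\ref{horizontal polynomial}. For condition~(3), the key observation is that each of \eqref{key11}--\eqref{key33} is a componentwise $A/V$-type bound applied to the horizontal curve in which two coordinates are first replaced by linear combinations; for instance, a direct computation using Lemma~\ref{ij-th component representation} shows that under $\gamma_2 \mapsto \gamma_2 - c_2 \gamma_1$ and $\gamma_3 \mapsto \gamma_3 - c_3 \gamma_1$ the $32$-area transforms as $\Delta \gamma_{32} - c_2 \Delta \gamma_{31} + c_3 \Delta \gamma_{21}$, matching the numerator of \eqref{key11}. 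Since $\gamma$ is $C^m$ each $A_{ij}(a,b) = o((b-a)^{m+1})$ by direct Taylor estimates, and the denominator of \eqref{key11} is by construction an upper bound on the horizontal variation of the shifted curve; the required limsup is then immediate, uniformly in $c_2,c_3$ over a compact set. This is the content of Proposition~\ref{New Condition}.

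For sufficiency, I would first apply Theorem~\ref{Classical whitney extension} to each $F_i$ to obtain $f_1, f_2, f_3 \in C^m(\mathbb{R})$ that are $C^\infty$ off $K$ and realize the jets on $K$. Lifting $(f_1, f_2, f_3)$ by the integral in Lemma~\ref{ij-th component representation} with initial values chosen at some fixed base point of $K$ produces a preliminary horizontal curve $\tilde\gamma$ whose horizontal components agree with the given jets on $K$ but whose vertical components $\tilde\gamma_{ij}$ generally do not match $F_{ij}$ on $K$. On any complementary gap $(a,b)$ of $K$ with $a, b \in K$, the mismatch in the $ij$-th vertical component is, up to lower-order $C^m$-Taylor corrections, exactly $A_{ij}(a,b)$. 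The task reduces to, on each such gap, adding perturbations $\psi_1, \psi_2, \psi_3$ supported in $(a,b)$ that simultaneously correct all three vertical areas while keeping the global $C^m$ norm small enough to survive accumulation of gaps at limit points of $K$. The quantitative machinery is supplied by Lemma~\ref{Classical Markov inequality}, Lemma~\ref{nice subinterval}, Lemma~\ref{L1 L2 equivalent}, and Lemma~\ref{coefficients bounded polynomials}.

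The construction of the $\psi_i$ proceeds in stages. On each gap I would reorder the indices so that $\int_a^b|T_a F_1'|$ is the largest $L^1$ norm and, among the remaining two, $\inf_c \int_a^b|T_a F_2' - c T_a F_1'|$ is largest, matching the hypotheses of Lemma~\ref{coefficients bounded polynomials}. Following \cite{PSZ}, a perturbation $\psi_2$ supported on a subinterval (chosen via Lemma~\ref{nice subinterval}) with vanishing boundary jet corrects $A_{21}$ without disturbing $f_1$ or the other areas; a similar $\psi_3$ corrects $A_{31}$. The remaining error is in $A_{32}$, which must be corrected \emph{without} undoing the first two corrections. The device is a discretization: pick finitely many subintervals on which $T_a F_1'$ is comparable to its supremum, encode admissible perturbations by their signed contributions on these subintervals, and reduce the coupled constraint ``correct $A_{32}$, preserve $A_{21}$ and $A_{31}$'' to a linear system of three equations in $\mathbb{R}^N$, solvable by elementary linear algebra. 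Markov's inequality then propagates the resulting sup-norm bounds to derivative bounds up to order~$m$.

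The main obstacle, and the point at which the generalized (rather than componentwise) $A/V$ condition is indispensable, is this final $A_{32}$ correction. After accounting for the admissible multiples of $f_1$ in the discretization, the effective $32$-area error is precisely $A_{32} - c_2 A_{31} + c_3 A_{21}$, where $c_2, c_3$ are the Gram--Schmidt coefficients which by Lemma~\ref{coefficients bounded polynomials} lie in a compact set $A$. The allowable sup norm of the perturbation is controlled, through Markov, by the $L^1$ norms in the denominator of \eqref{key11}, i.e.\ by the horizontal variation orthogonal to $T_a F_1'$. Thus the ratio forced to vanish by \eqref{key11} is exactly the one that must vanish for $\psi_2, \psi_3$ to be small enough. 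On gaps where a different horizontal index is dominant the analogous role is played by \eqref{key22} or \eqref{key33}; applying the appropriate version on each gap and gluing via the classical Whitney construction yields a global $C^m$ horizontal extension, completing sufficiency through Proposition~\ref{Our Goal}.
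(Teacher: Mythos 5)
Your necessity sketch matches the paper's Proposition~\ref{New Condition} in spirit: replacing $\gamma_2 \mapsto \gamma_2 - c_2\gamma_1$, $\gamma_3 \mapsto \gamma_3 - c_3\gamma_1$ and observing that the resulting $32$-area collects $A_{32} + c_3 A_{21} - c_2 A_{31}$ (together with a translation-to-the-origin reduction, cf.\ \textbf{Case 2} of Proposition~\ref{New Condition}) is exactly what is done there, so this part is fine.

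The sufficiency outline, however, has a genuine gap. You describe a single mechanism: fix $A_{21}$ and $A_{31}$ by perturbations against $f_1'$ on good subintervals, then correct the residual $A_{32}$ by a perturbation whose allowed sup norm is controlled via Markov by the $L^1$-norms $\int|T_aF_2' - c_2 T_aF_1'|$, $\int|T_aF_3' - c_3 T_aF_1'|$ appearing in the denominator of~\eqref{key11}, with $c_2,c_3$ chosen by Gram--Schmidt. That mechanism is exactly Proposition~\ref{case2} in the paper, and it only works when both those $L^1$-norms are $\gtrsim (b-a)^m$. When one of them is smaller than $(b-a)^m$ --- Proposition~\ref{Case1} --- the perturbation budget you describe is too small to produce the needed $32$-area, because a perturbation $\psi_3$ of size $\tilde C\alpha$ multiplied against $f_2'$ yields only $\lesssim \alpha(b-a)^m\int|T_aF_2'-c_2T_aF_1'|\ll \alpha(b-a)^{2m}$, while the residual $|\mathcal{A}_{32}|$ is only controlled at the level $\alpha(b-a)^{2m}$ (see~\eqref{case1 area remained}). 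In that regime the paper must instead manufacture area through the quadratic cross term $\int \phi_3'\phi_2 \gtrsim \tilde C^{-1}\alpha(b-a)^{2m}$, with both $\phi_2$ and $\phi_3$ chosen orthogonal to $f_1', f_2', f_3'$ in $L^2(I_1^{32})$ so as not to disturb the already-fixed areas, via the dimension count $\dim V_1, \dim V_2 \geq 4$ in $\mathbb{R}^7$. You do not mention this cross-term device at all. The same issue recurs in the regime $\int_a^b|T_aF_1'| < (b-a)^m$ (Proposition~\ref{case5}), where even $A_{21}$ and $A_{31}$ must be corrected by $\psi_i\psi_j'$ cross terms on disjoint subintervals; your remark that a subinterval perturbation ``corrects $A_{21}$ without disturbing $f_1$'' as in \cite{PSZ} presupposes a large $\int|T_aF_1'|$. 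Without this three-way case analysis the construction is incomplete, and the missing cases cannot be handled by the technique you describe --- the quadratic-area trick is a separate idea you would need to supply.
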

 We remark that for any $C^m$ horizontal curve $\gamma: \mathbb{R}\to \mathbb{G}_3$ and any compact set $K \subseteq \mathbb{R}$, we observed that the jets obtained by $F_i=(D^k \gamma_i|_{K})_{k=0, \cdots, m}$ for $1 \leq i \leq 3$ and $F_{ij}=(D^k \gamma_{ij}|_{K})_{k=0, \cdots, m}$ for $1 \leq j<i\leq 3$ satisfy the first two properties, see Lemma~\ref{horizontal polynomial}. In the next subsection we will check that every $C^m$ horizontal curve in $\mathbb{G}_3$ satisfies the generalized $A/V$ condition, see Proposition~\ref{New Condition}.

	\subsection{$C^m$ Horizontal Curves in $\mathbb{G}_3$ Satisfy The Generalized $A/V$ Condition} 
In this subsection we observe that jets obtained by restricting any $C^m$ horizontal curve satisfy the generalized $A/V$ condition.
\begin{proposition}\label{New Condition}
	Let $\gamma:\mathbb{R} \to \mathbb{G}_3$ be a $C^m$ horizontal curve in $\mathbb{G}_3$ and $K \subseteq \mathbb{R}$ be a compact set. Then the jets obtained by $F_i=(D^k \gamma_i|_{K})_{k=0, \cdots, m}$ and $F_{ij}=(D^k \gamma_{ij}|_{K})_{k=0, \cdots, m}$ satisfy the generalized $A/V$ condition.
\end{proposition}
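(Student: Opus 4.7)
The strategy is to reduce each of \eqref{key11}, \eqref{key22}, \eqref{key33} to a single instance of the $\mathbb{G}_2$ $A/V$ condition of Theorem~\ref{iff} by associating to each choice of parameters an appropriate $\mathbb{G}_2$-valued ``sub-curve'' obtained as a linear combination of the components of $\gamma$. The three cases are symmetric; I describe the argument for \eqref{key11} in detail.

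Fix $c_2, c_3 \in A$ and set
\[
\tilde\gamma_i := \gamma_i - c_i\gamma_1 \ (i=2,3), \qquad \tilde\gamma_{32} := c_3\gamma_{21} + \gamma_{32} - c_2\gamma_{31}.
\]
First I verify that $(\tilde\gamma_2, \tilde\gamma_3, \tilde\gamma_{32})$ is a $C^m$ horizontal curve in $\mathbb{G}_2$. The algebraic identity
\[
\tilde\gamma_3\tilde\gamma_2' - \tilde\gamma_3'\tilde\gamma_2 = (\gamma_3\gamma_2' - \gamma_3'\gamma_2) + c_3(\gamma_2\gamma_1' - \gamma_2'\gamma_1) - c_2(\gamma_3\gamma_1' - \gamma_3'\gamma_1),
\]
combined with the three horizontal-area formulae from Lemma~\ref{ij-th component representation} applied to $\gamma$, yields $\tilde\gamma_{32}(b) - \tilde\gamma_{32}(a) = \frac{1}{2}\int_a^b (\tilde\gamma_3\tilde\gamma_2' - \tilde\gamma_3'\tilde\gamma_2)$, as required. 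Next I identify the numerator and denominator of \eqref{key11} with the $\mathbb{G}_2$ quantities $\tilde A_{32}(a,b)$ and $\tilde V_{32}(a,b)$ for this sub-curve. Linearity of the Taylor polynomial, $T_a\tilde F_i = T_aF_i - c_iT_aF_1$, matches the denominators immediately. For the numerators, the same linearity expands $(T_a\tilde F_3)(T_a\tilde F_2)' - (T_a\tilde F_3)'(T_a\tilde F_2)$ into the three integrands of $A_{32}, A_{21}, A_{31}$ with coefficients $1, c_3, -c_2$, and the boundary terms combine, after regrouping by $R_1(b), R_2(b), R_3(b)$, into $\frac{1}{2}\tilde F_2(a)\tilde R_3(b) - \frac{1}{2}\tilde F_3(a)\tilde R_2(b)$ with $\tilde R_i := R_i - c_iR_1$. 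This yields the key identity $c_3A_{21}(a,b) + A_{32}(a,b) - c_2A_{31}(a,b) = \tilde A_{32}(a,b)$.

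With the reduction in place, the necessity direction of Theorem~\ref{iff} applied to $(\tilde\gamma_2, \tilde\gamma_3, \tilde\gamma_{32})$ gives $\tilde A_{32}(a,b)/\tilde V_{32}(a,b) \to 0$ as $b-a \to 0$ for each fixed $(c_2, c_3)$. The main obstacle is to upgrade this pointwise convergence to \emph{uniform} convergence over $A \times A$. I expect this to follow by retracing the $\mathbb{G}_2$ argument with explicit constants: that argument rests on the uniform-in-$t$ bounds $\sup_{t\in[a,b]}|R_i(t)| \leq \omega(b-a)(b-a)^m$ and $\sup_{t\in[a,b]}|R_i'(t)| \leq \omega(b-a)(b-a)^{m-1}$ for a modulus $\omega \to 0$ (valid uniformly for $a,b$ in a compact subset of $\mathbb{R}$ by the $C^m$ regularity of $\gamma$). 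Setting $M := \max\{|c| : c \in A\} < \infty$, the combinations $\tilde R_i$ and $\tilde R_i'$ enjoy the same bounds up to a factor $1+M$, and typical integration-by-parts reorganizations (e.g.\ $\int_a^b \tilde R_i' p_j = p_j(a)\tilde R_i(b) + \int_a^b \tilde R_i'(p_j - p_j(a))$) show that every term in $\tilde A_{32}$ is dominated by $(1+M)\omega(b-a)\tilde V_{32}(a,b)$. Hence the whole estimate becomes uniform in $(c_2,c_3) \in A \times A$.

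Finally, \eqref{key22} and \eqref{key33} are handled by the same strategy: for \eqref{key22} take $(\tilde\gamma_1, \tilde\gamma_3) := (\gamma_1 - c_1\gamma_2, \gamma_3 - c_3\gamma_2)$ with $\tilde\gamma_{31} := \gamma_{31} - c_3\gamma_{21} - c_1\gamma_{32}$, and for \eqref{key33} take $(\tilde\gamma_1, \tilde\gamma_2) := (\gamma_1 - c_1\gamma_3, \gamma_2 - c_2\gamma_3)$ with $\tilde\gamma_{21} := \gamma_{21} + c_1\gamma_{32} - c_2\gamma_{31}$. In each case the analogous algebraic identity establishes horizontality in $\mathbb{G}_2$, the identification of the linear combination of $A_{ij}$'s with $\tilde A$ proceeds exactly as above, and the same uniformity argument yields the conclusion.
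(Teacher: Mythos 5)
Your proposal is correct and follows essentially the same route as the paper: the central algebraic reduction — replacing $\gamma_2,\gamma_3$ by $\gamma_2 - c_2\gamma_1$, $\gamma_3 - c_3\gamma_1$ (the paper's $\epsilon_2,\epsilon_3$) so that $c_3A_{21}+A_{32}-c_2A_{31}$ becomes a single $\mathbb{G}_2$-type area for the reduced pair, with matching denominator — is exactly the paper's identity. You package this more cleanly by exhibiting $(\tilde\gamma_2,\tilde\gamma_3,\tilde\gamma_{32})$ as a horizontal $\mathbb{G}_2$ curve and invoking the necessity direction of Theorem~\ref{iff}, then tracking the $(1+M)$-dependence of the constants for uniformity over $A\times A$; the paper instead inlines the $[$PSZ, Proof of Prop.~5.2$]$ estimates directly (after a left-translation reduction to $\gamma(a)=0$), but the content is the same.
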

\begin{proof}
	We only prove the convergence \eqref{key11}. One can derive \eqref{key22} and \eqref{key33} in a similar manner.  We may assume that $K$ is a closed bounded interval, otherwise replace $K$ with $[\min K, \max K]$.  For a jet $F$ on $K$ and $a \in K$, we simply denote the formal $m$-th order Taylor polynomial by $T_aF$ instead of $T_a^m F$. Fix a compact subset $A \subseteq \mathbb{R}$. We examine the following two cases.

\textbf{Case 1}: We first assume that $\gamma(a)=0$.

	Fix $0<\epsilon<1$. There exists $\delta>0$ such that if $[a,b] \subset K$ and $(b-a)<\delta$ then:
\begin{enumerate}[(i)]
\item $|D^k\gamma_i-D^k\gamma_i(a)|\leq \epsilon$ on $[a,b]$ for $1\leq i \leq 3$ and $0\leq k\leq m$.
\item $|\gamma_i-T_a \gamma_i|\leq \varepsilon (b-a)^m$ on $[a,b]$ for $1\leq i \leq 3$.
\item $|\gamma_i'-(T_a \gamma_i)'|\leq \varepsilon (b-a)^{m-1}$ on $[a,b]$ for $1\leq i \leq 3$.
\end{enumerate}
 For any $c_2, c_3 \in A$, we write
	\[
	 \gamma_2=c_2\gamma_1+\epsilon_2 \ \ \ \text{and} \ \ \ \gamma_3=c_3\gamma_1+\epsilon_3
	\]
	where $\epsilon_2:=\gamma_2-c_2\gamma_1$ and $\epsilon_3:=\gamma_3-c_3\gamma_1$.
	Then note that 
	\begin{align}\label{linear dependence1}
		(\gamma_2\gamma_1'-\gamma_2'\gamma_1)&=(c_2\gamma_1+\epsilon_2)\gamma_1'-(c_2\gamma_1'+\epsilon_2')\gamma_1 \notag \\
		&=\epsilon_2\gamma_1'-\epsilon_2' \gamma_1,
	\end{align}
	\begin{align}\label{linear dependence2}
		(\gamma_3\gamma_2'-\gamma_3'\gamma_2)&=(c_3\gamma_1+\epsilon_3)(c_2\gamma_1'+\epsilon_2')-(c_3\gamma_1'+\epsilon_3')(c_2\gamma_1+\epsilon_2) \notag \\
		&=(c_3\gamma_1\epsilon_2'+c_2\gamma_1'\epsilon_3+\epsilon_2'\epsilon_3)-(c_3\gamma_1'\epsilon_2+c_2\gamma_1\epsilon_3'+\epsilon_2\epsilon_3'),
	\end{align}
	and
	\begin{align}\label{linear dependence3}
		(\gamma_3\gamma_1'-\gamma_3'\gamma_1)&=(c_3\gamma_1+\epsilon_3)\gamma_1'-(c_3\gamma_1'+\epsilon_3')\gamma_1 \notag \\
		&=\epsilon_3\gamma_1'-\epsilon_3'\gamma_1.
	\end{align}
	Then combining \eqref{linear dependence1}, \eqref{linear dependence2}, and \eqref{linear dependence3}, we obtain
	\[
	c_3(\gamma_2\gamma_1'-\gamma_2'\gamma_1)+(\gamma_3\gamma_2'-\gamma_3'\gamma_2)-c_2(\gamma_3\gamma_1'-\gamma_3'\gamma_1)=\epsilon_2'\epsilon_3-\epsilon_2\epsilon_3'.
	\]
	One can do the above argument in the same way for Taylor polynomials $T_a \gamma_i$ and obtain
	\begin{align}
		c_3\Big(T_a \gamma_2 (T_a \gamma_1)'-(T_a \gamma_2)'T_a \gamma_1\Big)&+\Big(T_a \gamma_3 (T_a \gamma_2)'-(T_a \gamma_3)'T_a \gamma_2\Big)-c_2\Big(T_a \gamma_3(T_a \gamma_1)'-(T_a \gamma_3)'T_a \gamma_1\Big)\notag \\
		&=(T_a \epsilon_2)'T_a \epsilon_3-T_a \epsilon_2 (T_a \epsilon_3)'.\notag 
	\end{align}
Therefore, from the above two equalities and \eqref{Aab} and Lemma~\ref{ij-th component representation}, we have
\begin{align}
	|A_{32}(a,b)+c_3A_{21}(a,b)-c_2A_{31}(a,b)|=|\frac{1}{2}\int_a^b(\epsilon_2'\epsilon_3-\epsilon_2\epsilon_3')-((T_a\epsilon_2)'T_a\epsilon_3-T_a\epsilon_2(T_a\epsilon_3)')|.\notag 
\end{align}
By the standard argument as in \cite[Proof of Proposition~5.2]{PSZ}, we have
\begin{align}
	|\int_a^b(\epsilon_2'\epsilon_3 &-\epsilon_2\epsilon_3')-((T_a\epsilon_2)'T_a\epsilon_3-T_a\epsilon_2(T_a\epsilon_3)')| \notag \\
	&\leq C(1+|c_2|)(1+|c_3|)\epsilon^2(b-a)^{2m}+C(2+|c_2|+|c_3|)\epsilon (b-a)^m \int_a^b|(T_a\epsilon_2)'|+|(T_a\epsilon_3)'|.\notag
\end{align}
where $C$ is a constant depending only on $m$. Since $\epsilon>0$ is arbitrary and $A$ is compact, this completes the proof of \textbf{Case 1}. 

\textbf{Case 2}: In the general case without assuming $\gamma_i(a)=0$ for all $1 \leq i \leq 3$ and $\gamma_{ij}(a)=0$ for all $1\leq j<i \leq 3$.

 We first set
\begin{equation}
	\tilde{\gamma}:=\gamma(a)^{-1}\cdot \gamma.
\end{equation}
Then one can easily verify from the group law of $\mathbb{G}_3$ in Definition~\ref{freegroupstep2} that 
\begin{equation}
	\tilde{\gamma}_k=\gamma_k-\gamma_k(a) \ \ \text{and} \ \ \tilde{\gamma}_{ij}=\gamma_{ij}-\gamma_{ij}(a)+\frac{1}{2}(\gamma_j(a)\gamma_i-\gamma_i(a)\gamma_j)
\end{equation}
for all $1 \leq k \leq 3$ and all $1\leq j<i\leq 3$. It is immediate that the curve $\tilde{\gamma}$ is a $C^m$  horizontal curve with $\tilde{\gamma}(a)=0$. Also, we set 
\begin{align}
 \tilde{A}_{ij}(a,b)&:=\tilde{\gamma}_{ij}(b)-\tilde{\gamma}_{ij}(a)-\frac{1}{2}\int_{a}^{b}((T_{a} \tilde{\gamma}_i)(T_{a} \tilde{\gamma}_j)'-(T_{a} \tilde{\gamma}_i)'(T_{a}\tilde{\gamma}_j)) \notag 
\end{align}
and 
\begin{equation}
	\tilde{\epsilon}_2:=\tilde{\gamma}_2-c_2\tilde{\gamma}_1 \ \ \text{and} \ \ \tilde{\epsilon}_3:=\tilde{\gamma}_3-c_2\tilde{\gamma}_1
\end{equation}
for any $c_2, c_3 \in A$. A direct calculation yields
\begin{equation}
	\tilde{A}_{ij}(a,b)=A_{ij}(a,b) \ \ \text{and} \ \ \tilde{\epsilon}_k'=\epsilon_k'
\end{equation}
for all $1\leq j <i \leq 3$ and $k=2,3$. We refer the readers to \cite[Lemma~3.3]{Z2} for the arguments in the Heisenberg group that can be simply generalized to the setting of $\mathbb{G}_3$.
Then by the arguments in \textbf{Case 1}, for any $\epsilon>0$, there exists $\delta>0$ such that whenever $|b-a|<\delta$, we obtain
\begin{align}
	|A_{32}(a,b) &+c_3A_{21}(a,b)-c_2A_{31}(a,b)|=|\tilde{A}_{32}(a,b) +c_3\tilde{A}_{21}(a,b)-c_2\tilde{A}_{31}(a,b)| \notag \\
	\overset{\textbf{Case 1}}&{\leq} C\epsilon^2(b-a)^{2m}+C\epsilon (b-a)^m \int_a^b|(T_a\tilde{\epsilon}_2)'|+|(T_a\tilde{\epsilon}_3)'| \notag \\
	&\leq C\epsilon^2(b-a)^{2m}+C\epsilon (b-a)^m \int_a^b|(T_a \epsilon_2)'|+|(T_a \epsilon_3)'|. \notag 
\end{align}
where $C$ is a constant depending only on $m$ and $A$. This completes the proof.
\end{proof}


\section{Sufficiency: The case $\mathbb{G}_3$}\label{Sufficiency G3 Section}
In the rest of the paper, we use $a_1\vee \cdots\vee a_n$ (resp. $a_1\wedge \cdots\wedge a_n$) to denote the maximum (resp. the minimum) of real numbers $a_1, \cdots, a_n$.

\begin{proposition}\label{Our Goal}
	Let $K \subseteq \mathbb{R}$ be compact and let $(F_i)_{1 \leq i \leq 3}, (F_{ij})_{1\leq j<i \leq 3}$ be jets of order $m$ on $K$. Assume that
	\begin{enumerate}
		\item $(F_i)_{1 \leq i \leq 3}, (F_{ij})_{1\leq j<i \leq 3}$ are Whitney fields of class $C^m$,
		\item for every $1 \leq k \leq m$ and $t\in K$ and all $1 \leq j <i\leq 3$ we have
 \begin{equation}
F_{ij}^{k}(t) = \mathcal{P}^k \left(F_{i}^0(t),F_{j}^0(t),F_{i}^1(t),F_{j}^1(t),\dots,F_{i}^{k}(t),F_{j}^{k}(t)\right),
\end{equation}
where $\mathcal{P}^k$ are the polynomials defined in \eqref{def of polynomials},
\item and the generalized $A/V$ condition holds.
\end{enumerate}
	Then there exists a $C^m$ horizontal curve $\gamma$ such that for each $0 \leq k \leq m$, $D^k\gamma_i|_{K}=F_i^k$ for all $1\leq i \leq 3$ and $D^k\gamma_{ij}|_{K}=F_{ij}^k$ for all $1 \leq j <i \leq 3$.
\end{proposition}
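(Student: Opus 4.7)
The overall strategy mirrors that of \cite{PSZ} in the Heisenberg case: Whitney-extend the horizontal layer, lift horizontally to obtain provisional vertical coordinates, and then perturb the horizontal functions on each gap of $\mathbb{R}\setminus K$ to eliminate the vertical errors. Concretely, I would first apply Theorem~\ref{Classical whitney extension} to each $F_i$ to produce $C^m$ functions $f_i\colon\mathbb{R}\to\mathbb{R}$ with $D^k f_i|_K = F_i^k$. Fix $t_0\in K$ and set
\[
g_{ij}(t):=F_{ij}^0(t_0)+\tfrac12\int_{t_0}^{t}\bigl(f_i f_j' - f_i' f_j\bigr)\,ds,\qquad 1\le j<i\le 3.
\]
The horizontality assumption (item~2), together with Lemma~\ref{horizontal polynomial} and $D^k f_i|_K = F_i^k$, forces $D^k g_{ij}|_K = F_{ij}^k$ for every $1\le k\le m$. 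Hence only the zeroth-order values of $g_{ij}$ on $K$ may fail to match $F_{ij}^0$, and on any gap $[a,b]\subset\mathbb{R}\setminus K$ with endpoints in $K$ the discrepancy $\bigl(g_{ij}(b)-g_{ij}(a)\bigr)-\bigl(F_{ij}^0(b)-F_{ij}^0(a)\bigr)$ is exactly $-A_{ij}(a,b)$ modulo boundary terms already absorbed into \eqref{Aab}.

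\textbf{Construction of perturbations.} On a single gap $[a,b]$, I would look for $C^m$ perturbations $\eta_i$ compactly supported in $(a,b)$ at an appropriate scale, and replace $f_i$ by $f_i+\eta_i$. To leading order this changes $g_{ij}(b)-g_{ij}(a)$ by a bilinear expression in $(\eta_i,\eta_j)$ and the Taylor polynomials $T_a f_i, T_a f_j$, whose achievable size is exactly controlled by $V_{ij}(a,b)$. Following \cite{PSZ}, after a suitable reordering of the indices I would first select $\eta_1,\eta_2$ to kill $A_{21}(a,b)$ and $A_{31}(a,b)$, which only requires the component-wise $A/V$ bound (i.e.\ the case $c_i=0$ in Definition~\ref{Definition of generalized A/V}). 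The delicate task is to correct $A_{32}(a,b)$ without reintroducing errors in the first two. For this I would adapt the discretization announced in the introduction: using Lemmas~\ref{nice subinterval} and \ref{L1 L2 equivalent} to replace each $T_a f_i'$ by a piecewise-constant approximation on a bounded number of subintervals of length comparable to $b-a$, the problem reduces to elementary linear algebra in $\mathbb{R}^N$. Concretely, one seeks a vector that is orthogonal, in the discretized $L^2$ sense, to the traces of $T_a f_1'$ and $T_a f_2'$, yet pairs in a prescribed way against $T_a f_3'$ so as to cancel the residual $32$-error.

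\textbf{Main obstacle and conclusion.} The heart of the argument is controlling the size of this third perturbation, and it is precisely here that the generalized $A/V$ condition \eqref{key11}--\eqref{key33} is essential: it ensures that the $32$-correction can be realized with coefficients whose contribution decays faster than the $C^m$-scale $(b-a)^m$. Lemma~\ref{coefficients bounded polynomials} permits one to restrict to coefficients in a bounded set $A$ (so the supremum in \eqref{key11}--\eqref{key33} is applicable), and Lemma~\ref{Classical Markov inequality} bounds the information lost in passing from polynomials to their discretized counterparts. Once the gap-by-gap perturbations $\eta_i$ have $C^k$-norms decaying uniformly as the gap shrinks toward $K$, one sets $\tilde f_i := f_i+\eta_i$ and lets $\tilde g_{ij}$ denote the corresponding horizontal lift; a standard Whitney-type argument at points of $K$ then verifies $\tilde f_i\in C^m(\mathbb{R})$ and produces the desired horizontal extension $(\tilde f_1,\tilde f_2,\tilde f_3,\tilde g_{21},\tilde g_{31},\tilde g_{32})$. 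The hard part will be the simultaneous control of the three corrections: unlike in $\mathbb{H}^1$, the vertical areas in $\mathbb{G}_3$ are genuinely coupled, and it is the interplay between the discretization and the generalized $A/V$ condition that makes decoupling possible.
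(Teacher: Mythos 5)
Your overall architecture matches the paper's: Whitney-extend the horizontal layer via Theorem~\ref{Classical whitney extension}, observe that item~2 and Lemma~\ref{horizontal polynomial} make the horizontal lift $g_{ij}$ correct to all derivative orders on $K$ so that only the zeroth-order discrepancy $A_{ij}(a,b)$ on gaps needs to be repaired, repair gap by gap with small perturbations, and use a discretization plus the generalized $A/V$ condition for the hard $32$-area. That is the right skeleton. However there are two substantive gaps.

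First, the index bookkeeping in your middle paragraph is wrong, and the error is not cosmetic. After reordering so that $\int_a^b|T_aF_1'|$ is the largest (as in \eqref{F1 is the largest G3}), the paper corrects $A_{21}$ by a bump in $f_2$ and $A_{31}$ by a bump in $f_3$, both placed where $|T_aF_1'|$ is large (via Lemma~\ref{nice subinterval}). Your plan to ``select $\eta_1,\eta_2$'' does not work: perturbing $f_1$ changes $A_{21}$ by $\int f_2'\eta_1$ up to boundary terms, and $f_2'$ may be far too small to generate the required area of size $\sim V_{21}(a,b)$. Likewise, the last perturbation in the paper's Proposition~\ref{case2} is chosen orthogonal to the discretized trace of $f_1'$ only, and is then \emph{paired against} $f_2'$ (not against $f_3'$) to manufacture the residual $32$-area; your description of ``orthogonal to $T_af_1'$ and $T_af_2'$, pairing against $T_af_3'$'' reverses the roles and would in fact make $\int\phi_3 f_2'$ vanish, defeating the construction.

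Second, you omit the case analysis that actually drives the construction. The paper splits the final $32$-repair into Propositions~\ref{Case1}, \ref{case2}, \ref{case5} according to how $\int_a^b|TF_1'|$ and the quantities $\inf_{c}\int|TF_j'-cTF_1'|$ compare to $(b-a)^m$. When both infima are $\ge (b-a)^m$ (Proposition~\ref{case2}) a single linear perturbation, orthogonal to $\alpha^1$ as above, suffices; the generalized $A/V$ condition with coefficients $c_2,c_3$ bounded via Lemmas~\ref{coefficients bounded polynomials} and \ref{coefficients bounded G3} gives exactly the $\alpha(b-a)^m\inf_c\int|TF_2'-cTF_1'|$ bound on the residual that the pairing can absorb. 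But when an infimum is $< (b-a)^m$ (Propositions~\ref{Case1}, \ref{case5}), one cannot generate enough $32$-area linearly and must instead use the genuinely bilinear term $\int\phi_2\phi_3'$, constructing \emph{both} $\phi_2$ and $\phi_3$ on a residual subinterval split into seven pieces with a dimension-count argument so that both lie in $\langle f_1',f_2',f_3'\rangle^\perp$; the residual $32$-area then has the stronger bound $C\alpha(b-a)^{2m}$. Without this dichotomy the uniform modulus-of-continuity bound on the perturbations, and hence the final $C^m$-regularity of the glued curve, does not follow. As written, your proposal identifies the right tools but would stall at exactly the point you correctly flag as the hard part.
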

\begin{proof}
	
We begin the proof of Proposition~\ref{Our Goal} by following the argument of \cite[Proposition~6.2]{PSZ}. Let $I:=[\min K, \max K]$ and $I\setminus K := \cup_{i=1}^{\infty} (a_i, b_i)$ where $(a_i, b_i)$ are disjoint open intervals. It is enough to construct a $C^m$ horizontal extension on $I$.  Applying the classical Whitney extension theorem (Theorem~\ref{Classical whitney extension}) to $F_1, F_2, F_3$, we obtain $f_1, f_2, f_3 \in C^m(\mathbb{R})$ such that $D^kf_i|_{K}=F_i^k$ for all $1\leq i\leq 3$ and $0\leq k \leq m$. Take a modulus of continuity $\alpha$ so that for all $a, x \in K$, $0 \leq k \leq m$ and $1\leq i \leq 3$, we have
\begin{equation}\label{good modulus G3 1}
	| F^k_i(x)-D^k T_a F_i(x)| \leq \alpha(|x-a|)|x-a|^{m-k},
\end{equation}
\begin{equation}\label{good modulus G3 2}
	|D^k f_i(x)-D^k T_a F_i(x)|\leq \alpha(|x-a|)|x-a|^{m-k},
\end{equation}
and
\begin{equation}\label{good modulus G3 3}
	|D^k f_i(x)-D^kf_i(a)| \leq \alpha(x-a).
\end{equation}
Moreover, by \eqref{key11}, \eqref{key22}, and \eqref{key33}, we further assume that for a fixed compact set $A \subseteq \mathbb{R}$, 
\begin{equation}\label{to fix 32 in G3}
	\sup_{c_2, c_3 \in A}\frac{|A_{32}(a,b)+c_{3}A_{21}(a,b)-c_2A_{31}(a,b)|}{(b-a)^{2m}+(b-a)^m\int_a^b|T_aF_2'-c_2T_aF_1'|+|T_aF_3'-c_3T_aF_1'|} \leq \alpha(b-a),
\end{equation} 
\begin{equation}\label{to fix 31 in G3}
	\sup_{c_1, c_3 \in A}\frac{|A_{31}(a,b)-c_{3}A_{21}(a,b)-c_1A_{32}(a,b)|}{(b-a)^{2m}+(b-a)^m\int_a^b|T_aF_1'-c_2T_aF_2'|+|T_aF_3'-c_3T_aF_2'|} \leq \alpha(b-a),
\end{equation}
and
\begin{equation}\label{to fix 21 in G3}
	\sup_{c_1, c_2 \in A}\frac{|A_{21}(a,b)+c_1A_{32}(a,b)-c_2A_{31}(a,b)|}{(b-a)^{2m}+(b-a)^m\int_a^b|T_aF_1'-c_1T_aF_3'|+|T_aF_2'-c_2T_aF_3'|} \leq \alpha(b-a).
\end{equation}
where $\alpha$ is made larger and depends on $A$.  In the rest of this section, we use $C$ to denote a constant which depends on $K$, $A$ and $m$. The constant $\tilde{C}$ is reserved to denote a constant which only depends on $K$ and $m$. These constants vary from line to line.  The compact set $A$ chosen above is an interval $[-\tilde{C}_1, \tilde{C}_1]$ where the constant $\tilde{C}_1$ is the largest possible $\tilde{C}$ in this proof. 
The main step of this section is to prove the following.

\begin{proposition}\label{New Perturbation}
	There exists a modulus of continuity $\beta$ for which the following holds:  for each interval $[a_l, b_l]$, there exist $C^{\infty}$ perturbations $\phi_1, \phi_2, \phi_3$ on $[a_l, b_l]$ such that 
	\begin{enumerate}
		\item $D^k \phi_i(a_l)=D^k \phi_i(b_l)=0 $ for all $0\leq k \leq m$,
		\item $\max_{[a_l, b_l]}\{ |D^k \phi_1|, |D^k \phi_2|, |D^k \phi_3|\} \leq \beta(b_l-a_l)$ for all $0\leq k \leq m$,
		\item $F_{ij}(b_l)-F_{ij}(a_l)=1/2\int_{a_l}^{b_l}(f_i+\phi_i)(f_j'+\phi_j')-(f_i'+\phi_i')(f_j+\phi_j)$ for all $1 \leq j<i \leq 3$.
	\end{enumerate}
\end{proposition}
\begin{proof}
	
 Since we fix the interval $[a_l, b_l]$ and construct perturbations, we simply denote $[a_l, b_l], A_{ij}(a_l,b_l),$ $\alpha(b_l-a_l)$ and $T_{a_l} F_i$ by $[a, b], A_{ij}, \alpha$ and $TF_i$, respectively. By the same reason as in \cite[Remark~6.4]{PSZ}, we always assume that $\alpha < 1/(2C_1)\wedge 1/(2\tilde{C}_1)$ by removing finitely many intervals $[a_l, b_l]$ where $C_1$ is the largest possible constant $C$ that appears in the present proof. For  the intervals with $\alpha \geq 1/(2C_1)\wedge 1/(2\tilde{C}_1)$, one can make perturbations based on the proof of Proposition~\ref{case5} so we omit the proof of this case. As in \cite[Claim~6.3]{PSZ}, we may assume that $F_i(a)=0$ for all $1 \leq i \leq 3$ and $F_{ij}(a)=0$ for all $1\leq j<i \leq 3$.

We first assume that the $|TF_1'|$ is the largest integral, i.e., 
\begin{equation}\label{F1 is the largest G3}
	\int_{a}^{b} |TF_1'| \geq \int_{a}^{b} |TF_2'|\vee \int_{a}^{b} |TF_3'|.
\end{equation}

For $1 \leq j<i\leq 3$, let
\[
\mathcal{A}_{ij}:=F_{ij}(b)-F_{ij}(a)-\frac{1}{2}\int_a^bf_if_j'-f_i'f_j.
\]
The argument similar to the proof of Proposition~\ref{New Condition} gives us the estimate
\begin{align}\label{estimate after extension G3}
|\mathcal{A}_{32}&+c_3\mathcal{A}_{21}-c_2\mathcal{A}_{31}|\notag \\
&\leq |(\mathcal{A}_{32}+c_3\mathcal{A}_{21}-c_2\mathcal{A}_{31})-(A_{32}+c_3 A_{21}-c_2 A_{31})|+|A_{32}+c_3 A_{21}-c_2 A_{31}|\notag \\
&\leq C \alpha(b-a)^{2m}+ C \alpha (b-a)^m\int_a^b \Big( |T_aF_2'-c_2T_aF_1'|+|T_aF_3'-c_3T_aF_1'|\Big)
\end{align}
for any $c_2, c_3 \in A$. We now divide the construction into several cases.

\begin{proposition}\label{Case1}
Suppose that
	\begin{equation}\label{F1 bigger than interval G3}
		\int_a^b |TF_1'|\geq (b-a)^m 
		\end{equation}
		and
		\begin{equation}\label{case1 assumption 2}
		\Big(\inf_{c_2 \in \mathbb{R}}\int_a^b |TF_2'-c_2TF_1'|\Big)\vee \Big(\inf_{c_3 \in \mathbb{R}}\int_a^b |TF_3'-c_3TF_1'|\Big) < (b-a)^m.
			\end{equation}
			 Then there exist $C^{\infty}$ perturbations $(\phi_i)_{i=1,2,3}$ on $[a, b]$ satisfying the properties in Proposition~\ref{New Perturbation}.
\end{proposition}
\begin{proof}
	 Take $c_2, c_3 \in \mathbb{R}$ satisfying 
	 \begin{equation}
	 	\Big( \int_a^b |TF_2'-c_2TF_1'| \Big) \vee \Big( \int_a^b |TF_3'-c_3TF_1'| \Big) < (b-a)^m.
	 \end{equation}
	 Then we have
	\[
	(|c_2|\vee |c_3|) \int_a^b|TF_1'| \leq \int_a^b(|TF_2'|\vee|TF_3'|) +(b-a)^m,
	\]
	which implies that $|c_2|\vee |c_3| \leq 2$ by \eqref{F1 is the largest G3} and \eqref{F1 bigger than interval G3}. Hence we may assume that $c_2, c_3 \in A$. Therefore, \eqref{estimate after extension G3} with this choice of $c_2$ and $c_3$ becomes
\begin{equation}\label{case1 remained area small}
|\mathcal{A}_{32}+c_3\mathcal{A}_{21}-c_2\mathcal{A}_{31}| \overset{\eqref{estimate after extension G3}, \eqref{case1 assumption 2}}{\leq} C\alpha (b-a)^{2m}.	
\end{equation}
By Lemma~\ref{nice subinterval}, one can take a closed subinterval $I_1 \subseteq [a, b]$  of length at least $(b-a)/4m^2$ such that $|TF_1'|\geq M_1/2$ where $M_1=\max_{[a, b]}|TF_1'|$. We first set $\phi_i\equiv 0$ on $[a, b] \setminus I_1$ for all $1\leq i \leq 3$. We split $I_1$ into three subintervals, $I_1^{21}$, $I_1^{31}$ and $I_1^{32}$. Note that by pugging in $c_1=c_2=0$ in \eqref{to fix 21 in G3}, we have 
\begin{equation}\label{wanna use Heisenberg}
A_{21}/V_{21} \leq \alpha.	
\end{equation}
 The above inequality \eqref{wanna use Heisenberg} together with the assumptions \eqref{F1 is the largest G3} and \eqref{F1 bigger than interval G3} allows us correct $F_{21}$ using the technique in the Heisenberg group, see \cite[Lemma~6.5]{PSZ}. Namely, one can construct $\phi_2$ on $I_1^{21}$ (and $\phi_1\equiv\phi_3\equiv 0$ on $I_1^{21}$) such that
	\begin{enumerate} 
		\item $D^k\phi_2|_{\partial I_1^{21}}=0$ for all $0\leq k \leq m$ where $\partial I_1^{21}$ is the set of endpoints of $I_1^{21}$,
		\item $|D^k \phi_2| \leq \tilde{C} \alpha$ on $[a, b]$ for all $0\leq k \leq m$,
		\item $\mathcal{A}_{21}=\int_{I_1^{21}} \phi_2 f_1'$.
	\end{enumerate}
	
	Similarly, by pugging $c_1=c_3=0$ in \eqref{to fix 31 in G3}, we have 
\begin{equation}
A_{31}/V_{31} \leq \alpha.	
\end{equation}
Hence we can construct $\phi_3$ as above on $I_1^{31}$ (and $\phi_1\equiv\phi_2\equiv 0$ on $I_1^{31}$) such that
	\begin{enumerate} 
		\item $D^k\phi_3|_{\partial I_1^{31}}=0$ for all $0\leq k \leq m$ where $\partial I_1^{31}$ is the set of endpoints of $I_1^{31}$,
		\item $|D^k \phi_3| \leq \tilde{C} \alpha$ on $[a, b]$ for all $0\leq k \leq m$,
		\item $\mathcal{A}_{31}=\int_{I_1^{31}} \phi_3 f_1'$.
	\end{enumerate}
		We finally correct $F_{32}$, i.e., we need to construct $\phi_2$ and $\phi_3$ on $I_1^{32}$ such that
		\begin{equation}\label{32 component perturbation 1}
			\mathcal{A}_{32}=\int_a^b \phi_3 f_2'+f_3\phi_2'+\phi_3\phi_2'.
		\end{equation}
		 Note that due to $\phi_2$ on $I_1^{21}$ and $\phi_3$ on $I_1^{31}$ that are already constructed, for any $c_2, c_3 \in A$ we have
		 \begin{align}\label{case1 dep1}
		 	|c_3\mathcal{A}_{21}-\int_{I_1^{21}}\phi_2 f_3'|&=|\int_{I_1^{21}}c_3\phi_2 f_1'-\phi_2 f_3'| \notag \\
		 	&\leq |\int_{I_1^{21}}c_3\phi_2 f_1'- c_3\phi_2 TF_1'|+|c_3\phi_2 TF_1'-\phi_2 TF_3'|+|\phi_2 TF_3'-\phi_2 f_3'| \notag \\
		 	\overset{\eqref{good modulus G3 2}, \eqref{case1 assumption 2}}&{\leq} C\alpha^2(b-a)^{2m}+\tilde{C}\alpha (b-a)^{2m}+\tilde{C} \alpha (b-a)^{2m} \notag \\
		 	&\leq C \alpha (b-a)^{2m}
		 \end{align}
		 and similarly,
		 \begin{equation}\label{case1 dep2}
		 	|c_2\mathcal{A}_{31}-\int_{I_1^{31}}\phi_3 f_2'|=|\int_{I_1^{31}}c_2\phi_3 f_1'-\phi_3 f_2'|\leq C\alpha(b-a)^{2m}.
		 \end{equation}

The above estimates imply that for any $c_2, c_3 \in A$, 
\begin{align}\label{case1 area remained}
	|\mathcal{A}_{32}&-\int_{I_1^{31}}\phi_3 f_2'-\int_{I_1^{21}}f_3\phi_2'| \notag \\
	&\leq|A_{32}+c_3\mathcal{A}_{21}-c_2\mathcal{A}_{31}|+|\int_{I_1^{21}}f_3'\phi_2-c_3\mathcal{A}_{21}|+|c_2\mathcal{A}_{31}-\int_{I_1^{31}}\phi_3 f_2'| \notag \\
	\overset{\eqref{case1 remained area small}, \eqref{case1 dep1}, \eqref{case1 dep2}}&{\leq}  C\alpha (b-a)^{2m}.
\end{align}
Taking into account \eqref{case1 area remained} and the fact that we should not change the fixed components, we will construct $\phi_2$ and $\phi_3$ on $I_1^{32}$ such that
\begin{equation}\label{32 component perturbation 2}
	\phi_2 \perp f_i'  \ \ \text{and} \ \ \phi_3 \perp f_i' \ \ \text{on $L^2(I_1^{32})$ for all $1\leq i \leq 3$},
\end{equation} 
and 
\begin{equation}\label{32 component perturbation 3}
	\mathcal{A}_{32}-\int_{I_1^{31}}\phi_3 f_2'-\int_{I_1^{21}}f_3\phi_2'=\int_{I_1^{32}}\phi_3\phi_2'.
\end{equation}
We note that constructing $\phi_2$ and $\phi_3$ on $I_1^{32}$ satisfying \eqref{32 component perturbation 2} and \eqref{32 component perturbation 3} is equivalent to  constructing $\phi_2$ and $\phi_3$ on $I_1^{32}$ satisfying \eqref{32 component perturbation 1}. The construction of $\phi_2$ and $\phi_3$ on $I_1^{32}$ proceeds as follows. We first split $I_1^{32}$ into seven subintervals (denoted by $I(1), \cdots, I(7)$) of equal length. For $l=1, \cdots, 7$, construct perturbations $\xi_l$ and $\eta_l$ supported on $I(l)$ as in \cite[Lemma~6.6]{PSZ}, i.e., 
\begin{enumerate}
	\item $D^k \xi_l|_{\partial I(l)}=0$ for all $0\leq k \leq m$ where $\partial I(l)$ is the set of endpoints of $I(l)$,
	\item $|D^k \xi_l| \leq \tilde{C} \sqrt{\alpha}$ on $I(l)$ for all $0\leq k \leq m$,
	\item $\xi_l'\geq \tilde{C}^{-1} \sqrt{\alpha}(b-a)^{m-1}$ on the middle third of $I(l)$,
\end{enumerate}
and 
\begin{enumerate}
	\item $D^k \eta_l|_{\partial I(l)}=0$  for all $0\leq k \leq m$ where $\partial I(l)$ is the set of endpoints of $I(l)$,
	\item $|D^k \eta_l| \leq \tilde{C} \sqrt{\alpha}$ on $I(l)$ for all $0\leq k \leq m$,
	\item $\eta_l \geq \tilde{C}^{-1} \sqrt{\alpha}(b-a)^m$ on the middle third of $I(l)$.
\end{enumerate}
 Since we can construct $\xi_l$ and $\eta_l$ in exactly the same manner on each $I(l)$, we may assume that $L:=\int_{I(l)}\eta_l'\xi_l$ is a constant which is independent of $l$. 
 Set 
\[
\phi_2:=\sum_{l=1}^{7}v_l\xi_l \ \ \ \text{and} \ \ \ \phi_3:=\sum_{l=1}^{7}v_l'\eta_l
\]
where $v_k, v_k' \in \mathbb{R}$. Consider
\[
V_1:=\Big\{(v_1, \cdots, v_7) \in \mathbb{R}^7| \ \phi_2 \perp f_i' \ \ \ \text{on $L^2(I_1^{32})$ for all $1\leq i \leq 3$} \Big\}
\]
and 
\[
V_2:=\Big\{(v_1', \cdots, v_7') \in \mathbb{R}^7| \ \phi_3 \perp f_i' \ \ \ \text{on $L^2(I_1^{32})$ for all $1\leq i \leq 3$} \Big\}.
\]
Then dim$V_1$, dim$V_2 \geq 4$ by linear algebra. Therefore there exists $x=(x_1, \cdots, x_7) \in V_1\cap V_2$ such that $||x||_{\mathbb{R}^7}=1$. Choosing $v_l=v_l'=x_l$ for each $1 \leq l \leq 7$, we have
\begin{equation}\label{estimate of product of perturbations}
\int_{I_1^{32}}\phi_3'\phi_2=\sum_{l=1}^{7}x_l^2\int_{I(l)}\eta_l'\xi_l=L \sum_{l=1}^{7}x_l^2=L \geq \tilde{C}^{-1} \alpha (b-a)^{2m}	
\end{equation}
where we used the properties of $\xi_l$ and $\eta_l$ to obtain the last inequality. Since we have \eqref{case1 area remained} and \eqref{estimate of product of perturbations}, scaling $\phi_2$ and $\phi_3$ by a constant which is independent of $[a, b]$ finishes the proof.
\end{proof}

\begin{proposition}\label{case2}
 Suppose that
 \begin{equation}\label{case2 assumption 1 G3}
		\int_a^b |TF_1'|\geq (b-a)^m 
		\end{equation}
		and
	\begin{equation}\label{case2 assumption 2 G3}
	 \Big(\inf_{c_2 \in \mathbb{R}}\int_a^b |TF_2'-c_2TF_1'|\Big)\vee \Big(\inf_{c_3 \in \mathbb{R}}\int_a^b |TF_3'-c_3TF_1'|\Big) \geq (b-a)^m.
			\end{equation}
			 Then there exist $C^{\infty}$ perturbations $(\phi_i)_{i=1,2,3}$ on $[a, b]$ satisfying the properties in Proposition~\ref{New Perturbation}.
\end{proposition}
\begin{proof}
	 We first assume that 
	 \begin{equation}\label{case2 technical}
	 	\inf_{c_2 \in \mathbb{R}}\int_a^b |TF_2'-c_2TF_1'| \geq \inf_{c_3 \in \mathbb{R}}\int_a^b |TF_3'-c_3TF_1'|.
	 \end{equation}
	  Set  $N=16m^2$. We split $[a, b]$ into $N$ subintervals of equal length $(I_k)_{k=1, \cdots, N}$ so that $(b-a)/(16m^2) =|I_k|$ for all $k=1, \cdots, N$, i.e.,  each interval $I \subseteq [a, b]$ with $|I|\geq (b-a)/(4m^2)$ contains at least three of $I_k$ and the size of each $I_k$ is still comparable to $(b-a)$ with a uniform comparable constant which only depends on $m$.

	Now we start fixing vertical components. By the Markov inequality (Lemma~\ref{nice subinterval}), we can choose a subinterval $I_k$ such that $|TF_1'(x)| \geq \max_{y \in [a, b]}|TF_1'(y)|/2$ for any $x \in I_k$.  Note that by pugging in $c_1=c_2=0$ in \eqref{to fix 21 in G3}, we have 
\begin{equation}\label{wanna use Heisenberg}
A_{21}/V_{21} \leq \alpha.	
\end{equation}
 The above inequality \eqref{wanna use Heisenberg} together with the assumptions \eqref{F1 is the largest G3} and \eqref{F1 bigger than interval G3} allows us to correct $F_{21}$ using the technique in the Heisenberg group, see \cite[Lemma~6.5]{PSZ}. Namely, one can construct $\phi_2$ on $I_k$ (and $\phi_1\equiv\phi_3\equiv 0$ on $I_k$) such that
	\begin{enumerate} 
		\item $D^l\phi_2|_{\partial I_k}=0$ for all $0\leq l \leq m$ where $\partial I_k$ is the set of endpoints of $I_k$,
		\item $|D^l \phi_2| \leq \tilde{C} \alpha$ on $[a, b]$ for all $0\leq l \leq m$,
		\item we have 
		\begin{equation}\label{case2 21 fixed G3}
		\mathcal{A}_{21}=\int_{I_k} \phi_2 f_1'.	
		\end{equation}
		
	\end{enumerate}

	Secondly, by the Markov inequality (Lemma~\ref{nice subinterval}) again, choose $k'$ such that $k'\neq k$ and $|TF_1'(x)| \geq \max_{y \in [a, b]}|TF_1'(y)|/2$ for any $x \in I_{k'}$. Note that we can find such $I_{k'}$ with $k'\neq k$ since each interval $I \subseteq [a, b]$ with $|I|\geq (b-a)/(4m^2)$ contains at least three of $I_k$. By pugging in $c_1=c_3=0$ in \eqref{to fix 31 in G3}, we have 
\begin{equation}
A_{31}/V_{31} \leq \alpha.	
\end{equation}
Hence we can construct $\phi_3$ as above on $I_{k'}$ (and $\phi_1\equiv\phi_2\equiv 0$ on $I_{k'}$) such that
	\begin{enumerate} 
		\item $D^l\phi_3|_{\partial I_{k'}}=0$ for all $0\leq l \leq m$ where $\partial I_{k'}$ is the set of endpoints of $I_{k'}$,
		\item $|D^l \phi_3| \leq \tilde{C} \alpha$ on $[a, b]$ for all $0\leq l \leq m$,
		\item we have 
		\begin{equation}\label{case2 31 fixed G3}
			\mathcal{A}_{31}=\int_{I_{k'}} \phi_3 f_1'.
		\end{equation}
	\end{enumerate}
	
	Lastly we fix the $F_{32}$ component. Note that we need to construct  perturbations $\phi_2$ and $\phi_3$ on $[a, b]\setminus (I_k \cup I_{k'})$ such that 
	\begin{equation}
		\mathcal{A}_{32}=\int_a^bf_3\phi_2'+f_2'\phi_3+\phi_3\phi_2'.
	\end{equation}
	Moreover, we have to achieve this without changing other areas since they have been fixed already. Note that for any $c_2, c_3 \in A$
	\begin{align}\label{case2 key estimate0}
		|&\int_{I_k\cup I_{k'}}f_3'\phi_2-c_3\mathcal{A}_{21}|\notag \\
		\overset{\eqref{case2 21 fixed G3}}&{=}|\int_{I_k}f_3'\phi_2-c_3\int_{I_k}f_1'\phi_2| \notag \\
		&\leq |\int_{I_k}f_3'\phi_2-TF_3'\phi_2|+|\int_{I_k}TF_3'\phi_2-c_3TF_1'\phi_2|+|\int_{I_k}c_3TF_1'\phi_2-c_3\int_{I_k}f_1'\phi_2| \notag \\
		\overset{\eqref{good modulus G3 2}, \eqref{case2 assumption 2 G3}}&{\leq} C\alpha (b-a)^m\int_a^b |TF_3'-c_3TF_1'|
	\end{align}
	and similarly,
	\begin{equation}
		|\int_{I_k\cup I_{k'}}f_2'\phi_3-c_2\mathcal{A}_{31}|\overset{\eqref{case2 31 fixed G3}}{=}|\int_{I_{k'}
		}f_2'\phi_3-c_2\int_{I_{k'}}f_1'\phi_3| \leq C\alpha (b-a)^m\int_a^b |TF_2'-c_2TF_1'|.
	\end{equation}
Therefore, we have
\begin{align}\label{case2 key estimate1}
	|\int_{I_k\cup I_{k'}}&f_3'\phi_2 +\mathcal{A}_{32}-\int_{I_k\cup I_{k'}}f_2'\phi_3| \notag \\
	&\leq |\int_{I_k\cup I_{k'}}f_3'\phi_2-c_3\mathcal{A}_{21}|+|\mathcal{A}_{32}+c_3\mathcal{A}_{21}-c_2\mathcal{A}_{31}|+|c_2\mathcal{A}_{31}-\int_{I_k\cup I_{k'}}f_2'\phi_3| \notag \\
	\overset{\eqref{estimate after extension G3}}&{\leq} C \alpha (b-a)^m \Big(\int_a^b |TF_2'-c_2TF_1'|+|TF_3'-c_3TF_1'|\Big) 
\end{align}
for any $c_2, c_3 \in A$. This implies that
\begin{align}\label{case2 key estimate1 conclusion}
	|\int_{I_k\cup I_{k'}}&f_3'\phi_2 +\mathcal{A}_{32}-\int_{I_k\cup I_{k'}}f_2'\phi_3|  \notag \\
	&\leq C\alpha (b-a)^m \Big( \inf_{c_2 \in A} \int_a^b |TF_2'-c_2TF_1'|+ \inf_{c_3 \in A}\int_a^b |TF_3'-c_2TF_1'|\Big) \notag \\
	\overset{ \text{Lemma}~\ref{coefficients bounded polynomials}}&{\leq} C\alpha (b-a)^m \Big( \inf_{c_2 \in \mathbb{R}} \int_a^b |TF_2'-c_2TF_1'|+ \inf_{c_3 \in \mathbb{R}}\int_a^b |TF_3'-c_2TF_1'|\Big) \notag \\
	\overset{\eqref{case2 technical}}&{\leq} C\alpha (b-a)^m  \inf_{c_2 \in \mathbb{R}} \int_a^b |TF_2'-c_2TF_1'|
\end{align}

For each $i\in \{1, \cdots, N\}\setminus \{k, k'\}$, we construct a perturbation $\eta_i$ supported on $I_i$ as in \cite[Proof of Lemma~6.5]{PSZ}, i.e., 
	\begin{enumerate} 
		\item $D^l\eta_i|_{\partial I_i}=0$ for all $0\leq l \leq m$ where $\partial I_l$ is the set of endpoints of $I_i$,
		\item $|D^l \eta_i| \leq \tilde{C} \alpha$ on $I_i$ for all $0\leq l \leq m$,
		\item $|\eta_l| \geq \tilde{C}^{-1}\alpha (b-a)^m$ on the middle third of $I_i$.
	\end{enumerate}
	
For $l=1,2, 3,$ and $i\in \{1, \cdots, N\}\setminus \{k, k'\}=:W$, we set 
\[
\alpha^l_i:=\int_a^b \eta_i f_l'
\]
and
	\begin{equation}
		\alpha^l:=(\alpha^l_i)_{i \in W} \in \mathbb{R}^{N'}.
	\end{equation}
	where $N'=N-2$. In the following lemma, we make a specific choice of $c_2$ and $c_3$. Since we later use \eqref{case2 key estimate1} with this choice of  $c_2$ and $c_3$ (see \eqref{final perturbation nice bound G3}), it is crucial that these coefficients lie in $A$. We prove that these coefficients can be chosen to lie in $A$ below.

	\begin{lemma}\label{coefficients bounded G3}
	Set
	\begin{equation}
		c_2:=\frac{\langle \alpha^1, \alpha^2 \rangle_{\mathbb{R}^{N'}}}{||\alpha^1||_{\mathbb{R}^{N'}}^2} \ \ \text{and}  \ \ c_3:=\frac{\langle \alpha^1, \alpha^3 \rangle_{\mathbb{R}^{N'}}}{||\alpha^1||_{\mathbb{R}^{N'}}^2}.
	\end{equation}
		Then the constants $c_2$ and $c_3$ are bounded above by some constant $\tilde{C}$ which only depends on $K$ and $m$.
	\end{lemma}
	\begin{proof}
	We first estimate $|\alpha^l_i|$. By the properties of $\eta_i$ and the assumption, we have that
	\begin{align}\label{alpha estimate}
		|\alpha^l_i| &\leq |\int_a^b \eta_i f_l'| \leq |\int_a^b \eta_i f_l'-\eta_i TF_l'|+|\int_a^b \eta_i TF_l'| \notag \\
		\overset{\eqref{good modulus G3 2}}&{\leq} \tilde{C} \alpha (b-a)^m \Big(\alpha(b-a)^m+\int_a^b |TF_l'|\Big)\notag \\
		\overset{\eqref{F1 is the largest G3}, \eqref{case2 assumption 1 G3}}&{\leq} \tilde{C} \alpha (b-a)^m \int_a^b |TF_1'|
	\end{align}
	for every $1\leq l \leq 3$ and $1 \leq i \leq N'$. The inequality \eqref{alpha estimate} and Cauchy\nobreakdash-Schwarz inequality gives us that
	\begin{align}\label{constant bounded1}
		|\langle \alpha^1, \alpha^l \rangle_{\mathbb{R}^{N'}}| &\leq ||\alpha^1||_{\mathbb{R}^{N'}} ||\alpha^l||_{\mathbb{R}^{N'}} \notag \\
		\overset{\eqref{alpha estimate}}&{\leq} \tilde{C}(N')^2\alpha^2 (b-a)^{2m}\Big(\int_a^b|TF_1'|\Big)^2
	\end{align}
	for $l=2, 3$. We next estimate $||\alpha^1||_{\mathbb{R}^{N'}}$ from below. By the Markov inequality (Lemma~\ref{nice subinterval}) again, we can find $\tilde{k}$ such that $\tilde{k} \neq k, k'$ and  $|TF_1'(x)| \geq \max_{y \in [a, b]}|TF_1'(y)|/2$ for any $x \in I_{\tilde{k}}$. We remark again that we can find such $I_{\tilde{k}}$ since each interval $I \subseteq [a, b]$ with $|I|\geq (b-a)/(4m^2)$ contains at least three of $I_k$.  Since $TF_1'$ has the same sign on $I_{\tilde{k}}$ and so does $\eta_{\tilde{k}}$, we may assume that $\eta_{\tilde{k}} TF_1' \geq 0$ on $I_{\tilde{k}}$. Remark that $\tilde{k} \in W$ and  $\eta_{\tilde{k}} \geq \tilde{C}^{-1}\alpha (b-a)^m$ on the middle third of $I_{\tilde{k}}$, which implies that $|\eta_{\tilde{k}}TF_1'| \geq \tilde{C}^{-1}\alpha (b-a)^m \max_{x \in [a, b]}|TF_1'|$ on the middle third of $I_{\tilde{k}}$. Therefore, we have
	\begin{align}\label{estimate of eta TF1}
		\alpha (b-a)^m \int_a^b |TF_1'| &\leq  \alpha (b-a)^{m+1} \max_{x \in [a, b]}|TF_1'| \notag \\
		&\leq \tilde{C} \int_{I_{\tilde{k}}}|\eta_{\tilde{k}} TF_1'| \notag \\
		&\leq \tilde{C} \Big|\int_{I_{\tilde{k}}}\eta_{\tilde{k}}TF_1'\Big|,
	\end{align}
	where we used the fact that $\eta_{\tilde{k}} TF_1' \geq 0$ on $I_{\tilde{k}}$ to obtain the last inequality. Hence we obtain
	\begin{align}\label{estimate of eta TF1 continued}
		\alpha (b-a)^m \int_a^b |TF_1'| \overset{\eqref{estimate of eta TF1}}&{\leq} \tilde{C}\Big|\int_{I_{\tilde{k}}}\eta_{\tilde{k}}TF_1'\Big| \notag \\
		 &\leq \tilde{C}\Big|\int_{I_{\tilde{k}}}\eta_{\tilde{k}}TF_1'-\eta_{\tilde{k}}f_1'\Big|+\tilde{C}\Big|\int_{I_{\tilde{k}}}\eta_{\tilde{k}}f_1'\Big| \notag \\
		 \overset{\eqref{good modulus G3 2}}&{\leq} \tilde{C}\alpha^2 (b-a)^{2m}+\tilde{C}\Big|\int_{I_{\tilde{k}}}\eta_{\tilde{k}}f_1'\Big| \notag \\
		 \overset{\eqref{case2 assumption 1 G3}}&{\leq} \tilde{C}\alpha^2(b-a)^m \int_a^b |TF_1'|+\tilde{C}\Big|\int_{I_{\tilde{k}}}\eta_{\tilde{k}}f_1'\Big|.
	\end{align}
	The above estimate together with $\alpha < 1/(2C)\vee 1/(2\tilde{C})$  implies that 
	\begin{align}\label{constant bounded2}
		||\alpha^1||_{\mathbb{R}^{N'}} &\geq |\alpha^1_{\tilde{k}}| \geq  \tilde{C}^{-1} \alpha (b-a)^m\int_a^b |TF_1'|.
	\end{align}
	Therefore, by \eqref{constant bounded1} and \eqref{constant bounded2}, $c_2$ and $c_3$ are uniformly bounded by some constant $\tilde{C}$. Hence the lemma follows.
	\end{proof}
	  By the above lemma, we may assume that 
	$c_2, c_3~\in~A$. Write
	\[
	\alpha^{l}=\text{Pr}_{\langle\alpha^1\rangle}\alpha^l+\text{Pr}_{\langle \alpha^1\rangle^{\perp}}\alpha^l=\frac{\langle \alpha^1, \alpha^l \rangle_{\mathbb{R}^{N'}}}{||\alpha^1||_{\mathbb{R}^{N'}}^2}\alpha^1+\Big(\alpha^l-\frac{\langle \alpha^1, \alpha^l \rangle_{\mathbb{R}^{N'}}}{||\alpha^1||_{\mathbb{R}^{N'}}^2}\alpha^1\Big)=c_l \alpha^1 +(\alpha^l-c_l\alpha^1),
	\]
where $\text{Pr}_{\langle \alpha^1\rangle}$ and $\text{Pr}_{\langle \alpha^1\rangle^{\perp}}$ are canonical projections onto the span of $\alpha^1$, $\langle\alpha^1\rangle$, and its orthogonal space $\langle\alpha^1\rangle^{\perp}$, respectively. We note that as a consequence of the Markov inequality (Lemma~\ref{nice subinterval}) for $|TF_l' -c_l TF_1'|$ for $l=2,3$, one can find $k_l \in W$ such that $|TF_l'(x)-c_lTF_1'(x)| \geq \max_{y \in [a, b]}|TF_l'-c_lTF_1'(y)|/2$ for any $x \in I_{k_l}$. By doing a similar argument as in \eqref{estimate of eta TF1},  we have
\begin{align}\label{linear independence 0}
	0 &\neq \alpha (b-a)^m \int_a^b|TF_l'-c_l TF_1'| \leq \tilde{C}\Big|\int_{I_{k_l}}\eta_l(TF_l'-c_l TF_1')\Big| \notag \\
	&\leq  \tilde{C}\Big|\int_{I_{k_l}}\eta_{l}(TF_l'-c_l TF_1')-\eta_{l}(f_l'-c_lf_1')\Big|+\tilde{C}\Big|\int_{I_{k_l}}\eta_{l}(f_l'-c_lf_1')\Big| \notag \\
	\overset{\eqref{good modulus G3 2}}&{\leq} C \alpha^2 (b-a)^{2m}+\tilde{C}\Big|\int_{I_{k_l}}\eta_{l}(f_l'-c_lf_1')\Big| \notag \\
	\overset{\eqref{case2 assumption 2 G3}}&{\leq} C\alpha^2 (b-a)^m \int_a^b|TF_l'-c_l TF_1'|+\tilde{C}\Big|\int_{I_{k_l}}\eta_{l}(f_l'-c_lf_1')\Big|\notag \\
	&= C\alpha^2 (b-a)^m \int_a^b|TF_l'-c_l TF_1'|+\tilde{C}|\alpha^l_{k_l}-c_l\alpha^1_{k_l}|
\end{align}
where we obtain the last inequality from the assumption. Therefore, the above estimate \eqref{linear independence 0} together with $\alpha < 1/(2C)\vee 1/(2\tilde{C})$ gives us that
\begin{equation}\label{linear independence}
	||\alpha^l-c_l\alpha^1||_{\mathbb{R}^{N'}} \geq |\alpha^l_{k_l}-c_l\alpha^1_{k_l}| \geq \tilde{C}^{-1} \alpha (b-a)^m \int_a^b|TF_l'-c_l TF_1'|\neq 0
\end{equation}
for $l=2, 3$. This implies that $\alpha^l$ is linearly independent of $\alpha^1$ and $\text{Pr}_{\langle\alpha^1\rangle^{\perp}}\alpha^l \neq 0$ for $l=2, 3$. Hence we set $v_l:=\text{Pr}_{\langle\alpha^1\rangle^{\perp}}\alpha^l/||\text{Pr}_{\langle\alpha^1\rangle^{\perp}}\alpha^l||_{\mathbb{R}^{N'}}$ for $l=2,3$. Then we have $\langle \alpha^1, v_l \rangle_{\mathbb{R}^{N'}}=0$ and
\begin{align}\label{choice of vector is nice G3}
	\langle \alpha^l, v_l \rangle_{\mathbb{R}^{N'}}&=\langle \text{Pr}_{\langle\alpha^1\rangle}\alpha^l+\text{Pr}_{\langle \alpha^1\rangle^{\perp}}\alpha^l, v_l \rangle_{\mathbb{R}^{N'}} \notag \\
	&=||\text{Pr}_{\langle \alpha^1\rangle^{\perp}}\alpha^l||_{\mathbb{R}^{N'}}\notag \\
	&=||\alpha^l-c_l\alpha^1||_{\mathbb{R}^{N'}} \notag \\
	\overset{\eqref{linear independence}}&{\geq} \tilde{C}^{-1}\alpha (b-a)^m \int_a^b|TF_l'-c_l TF_1'|
\end{align}
for $l=2, 3$. 
Now we set 
\begin{equation}\label{def of pre final perturbation G3}
	\psi_2:=\sum_{i=1}^{N'} v_i^3 \eta_i \ \  \text{and} \ \ \psi_3:=\sum_{i=1}^{N'} v_i^2 \eta_i \ \  \text{on $[a, b]\setminus (I_k \cup I_{k'})$}
\end{equation}
where $v_l~:=~(v_1^l, \cdots, v_{N'}^l)$ for $l=2, 3$. Then we have
\begin{equation}
	\int_{[a, b]\setminus(I_k\cup I_{k'})}\psi_2 f_1'\overset{\eqref{def of pre final perturbation G3}}{=} \langle v_3, \alpha^1 \rangle_{\mathbb{R}^{N'}} = \int_{[a, b]\setminus(I_k\cup I_{k'})}\psi_3 f_1'\overset{\eqref{def of pre final perturbation G3}}{=} \langle v_2, \alpha^1\rangle_{\mathbb{R}^{N'}} = 0, \notag 
\end{equation}

\begin{equation}\label{case2 key estimate2}
	\int_{[a, b]\setminus(I_k\cup I_{k'})}\psi_2 f_3'\overset{\eqref{def of pre final perturbation G3}}{=}\langle \alpha^3, v_3 \rangle_{\mathbb{R}^{N'}} \overset{\eqref{choice of vector is nice G3}}{\geq} \tilde{C}^{-1}\alpha (b-a)^m \int_a^b|TF_3'-c_3 TF_1'|,
\end{equation}
and 
\begin{equation}\label{case2 key estimate3}
	\int_{[a, b]\setminus(I_k\cup I_{k'})}\psi_3 f_2'\overset{\eqref{def of pre final perturbation G3}}{=}\langle \alpha^2, v_2 \rangle_{\mathbb{R}^{N'}} \overset{\eqref{choice of vector is nice G3}}{\geq}  \tilde{C}^{-1}\alpha (b-a)^m \int_a^b|TF_2'-c_2 TF_1'|.
\end{equation}

Finally, set
\begin{equation}\label{def of final perturbation}
	\phi_3=\frac{\int_{I_k\cup I_{k'}} f_3'\phi_2 +\mathcal{A}_{32}-\int_{I_k\cup I_{k'}}f_2'\phi_3}{\int_{[a, b]\setminus(I_k\cup I_{k'})}\psi_3 f_2'}\psi_3
\end{equation}
and $\phi_1\equiv\phi_2 \equiv 0$ on $[a, b]\setminus (I_k \cup I_{k'})$. Then,
we obtain
\begin{align}
	\int_a^b f_3\phi_2'+f_2'\phi_3+\phi_3\phi_2' & =\int_{I_k \cup I_{k'}}f_3\phi_2' +\int_{I_k \cup I_{k'}}f_2'\phi_3+\int_{[a, b]\setminus (I_k \cup I_{k'})}f_2'\phi_3 \notag \\
	\overset{\eqref{def of final perturbation}}&{=}-\int_{I_k \cup I_{k'}}f_3'\phi_2+\int_{I_k \cup I_{k'}}f_2'\phi_3+\Big(\int_{I_k\cup I_{k'}} f_3'\phi_2 +\mathcal{A}_{32}-\int_{I_k\cup I_{k'}}f_2'\phi_3\Big) \notag \\
	&=\mathcal{A}_{32}.
\end{align}
Also, by the estimates \eqref{case2 key estimate1 conclusion} and \eqref{case2 key estimate3}, we have 
\begin{equation}\label{final perturbation nice bound G3}
	\max_{x \in [a,b]}|D^{k} \phi_3(x)|\leq C \alpha
\end{equation}
for $k=1. \cdots, m$, which implies that we are able to construct the desired perturbation $\phi_3$ with the modulus of continuity $\beta:=C_1 \alpha$ if \eqref{case2 technical} holds.  In the case that the converse of \eqref{case2 technical} holds, switch the role of $\phi_2$ and $\phi_3$ and use $\psi_2$ instead of $\psi_3$. This completes the proof of Proposition~\ref{case2}.
\end{proof}

\begin{proposition}\label{case5}
	 Suppose that
	 \begin{equation}\label{case5 assumption}
	 	(b-a)^m > \int_a^b |TF_1'|.
	 \end{equation}
 Then there exist $C^{\infty}$ perturbations $(\phi_i)_{i=1,2,3}$ on $[a, b]$ satisfying the properties in Proposition~\ref{New Perturbation}.
\end{proposition}
\begin{proof}
Since we assume that \eqref{F1 is the largest G3} and \eqref{case5 assumption} hold, we have
	\begin{equation}
		 \int_a^b |TF_2'|< (b-a)^m \ \ \text{and} \ \ \int_a^b |TF_3'| < (b-a)^m.
			\end{equation}
			Note also that by choosing $c_2=c_3=0$ in \eqref{estimate after extension G3}, we have
			\begin{equation}\label{case5 key estimate0}
				|\mathcal{A}_{ij}| \leq  C\alpha (b-a)^{2m},
			\end{equation}
			 see also \cite[(6.8)]{PSZ}. The construction of perturbations $\phi_i$ in this case is similar to the last part of Proposition~\ref{Case1}.
	 Split $[a, b]$ into three pieces of equal length, $I^{21}, I^{31},$ and $I^{32}$. We will fix areas one by one using these three intervals. 
	We first fix the $F_{21}$ component. Split $I^{21}$ into seven subintervals (denoted by $I(1), \cdots, I(7)$) of equal length. For each subinterval $I(k)$, construct perturbations $\xi_k$ and $\eta_k$ as in the proof of Proposition~\ref{Case1}. Note that $L:=\int_{I(k)}\eta_k' \xi_k$ is a constant which is independent of $1\leq k \leq 7$.
	
	 Set
\[
\phi_1:=\sum_{k=1}^{7}v_k\xi_k, \ \ \ \phi_2:=\sum_{k=1}^{7}v_k'\eta_k,  \ \ \ \text{and} \ \ \ \phi_3\equiv 0
\]
on $I_{21}$ where $v_k, v_k' \in \mathbb{R}$. Consider
\[
V_1:=\Big\{(v_1, \cdots, v_7) \in \mathbb{R}^7|  \ \phi_1 \perp f_i' \ \ \ \text{on $L^2(I_1^{32})$ for all $1\leq i \leq 3$} \Big\}
\]
and 
\[
V_2:=\Big\{(v_1', \cdots, v_7') \in \mathbb{R}^7|  \ \phi_2 \perp f_i' \ \ \ \text{on $L^2(I_1^{32})$ for all $1\leq i \leq 3$} \Big\}.
\]
Then dim$V_1$, dim$V_2 \geq 4$ by linear algebra. Therefore there exists $x=(x_1, \cdots, x_7) \in V_1\cap V_2$ such that $||x||_{\mathbb{R}^7}=1$. Choosing $v_k=v_k'=x_k$, we have
\begin{equation}\label{case5 key estimate}
	\int_{I^{21}}\phi_2'\phi_1=\sum_{k=1}^{7}x_k^2\int_{I(k)}\eta_k'\xi_k=\int_{I(i)}\eta_i'\xi_i \geq \tilde{C}^{-1} \alpha (b-a)^{2m}
\end{equation}
Since we have \eqref{case5 key estimate0} and \eqref{case5 key estimate}, scaling $\phi_2$ and $\phi_3$ by a constant which is independent of $[a, b]$ finishes the proof.
Applying the technique in \cite[Proof of Lemma~6.6, Case~3]{PSZ} completes fixing the $F_{21}$ area without affecting the $F_{31}$ and $F_{32}$ components. One has to apply the exactly same technique on $I^{31}$ and $I^{32}$ to fix the $F_{31}$ and $F_{32}$ components, respectively.
\end{proof}

As we summarize how we constructed perturbations in the case where $\int_a^b |TF_1'|$ is the largest integral, we claim that the cases considered in Propositions~\ref{Case1}, \ref{case2} and \ref{case5} are sufficient. When $\int_a^b |TF_1'|$ is the largest integral and smaller than $(b-a)^m$, then we can simply construct perturbations component wise (Proposition~\ref{case5}). When $\int_a^b |TF_1'|$ is the largest integral and greater than $(b-a)^m$, we first examine if the quantity
\begin{equation}\label{quantity to look at}
	\Big(\inf_{c_2 \in \mathbb{R}} \int_a^b |TF_2'-c_2TF_1'|\Big) \vee \Big(\inf_{c_3 \in \mathbb{R}}\int_a^b |TF_3'-c_3TF_1'|\Big)
\end{equation}
is less than $(b-a)^m$ or not. We considered the cases based on \eqref{quantity to look at} as in Propositions~\ref{Case1} and \ref{case2}.

   Note that we change the way of making perturbations $\phi_i$ based on how big the quantity \eqref{quantity to look at} is. On the other hand, in any case, the $F_{21}$ component is always the first one that we fix by $\phi_2$. Then we next fix the $F_{31}$ component by $\phi_3$ without affecting the $F_{21}$ component that has been already corrected. Then finally we fix the $F_{32}$ component based on  \eqref{quantity to look at} using \eqref{key11}. Hence we finish constructing perturbations $\phi_1, \phi_2, \phi_3$ on $[a, b]$ when $\int_a^b |TF_1|$ is the largest integral.

We now assume that  $\int_a^b |TF_2'|$ is the largest one. If $\int_a^b |TF_2'| < (b-a)^m$, then doing exactly the same argument as in  Proposition~\ref{case5} gives the desired perturbations $\phi_1, \phi_2, \phi_3$ on $[a, b]$. So we assume that  $\int_a^b |TF_2'| \geq  (b-a)^m$. We next check if
\begin{equation}\label{quantity to look at2}
	\Big(\inf_{c_1} \int_a^b |TF_1'-c_1TF_2'|\Big) \vee \Big( \inf_{c_3}\int_a^b |TF_3'-c_3TF_2'|\Big)
\end{equation}
 is less than $(b-a)^m$ or not. In both cases, we can fix the $F_{21}$ component by $\phi_1$ and the $F_{32}$ component by $\phi_3$ as we corrected the $F_{21}$ and $F_{31}$ components in the case that $\int_a^b|TF_1'|$ is the largest integral. Then one can finally fix $F_{31}$ component in the same manner as in Propositions~\ref{Case1} and \ref{case2} based on how big \eqref{quantity to look at2} is and \eqref{key22}.

Similarly, assume that  $\int_a^b |TF_3'|$ is the largest one. If $\int_a^b |TF_3'| < (b-a)^m$, then doing exactly the same argument as in  Proposition~\ref{case5} gives the desired perturbations $\phi_1, \phi_2, \phi_3$ on $[a, b]$. We next check if
\begin{equation}\label{quantity to look at3}
	\Big( \inf_{c_1} \int_a^b |TF_1'-c_1TF_3'| \Big) \vee \Big( \inf_{c_2}\int_a^b |TF_2'-c_2TF_3'|\Big)
\end{equation}
is less than $(b-a)^m$ or not. In both cases, we can fix the $F_{31}$ component by $\phi_1$ and the $F_{32}$ component by $\phi_2$. Then finally fix the $F_{21}$ component based on how big \eqref{quantity to look at3} is and \eqref{key33}.

This completes the proof of Proposition~\ref{New Perturbation}.
\end{proof}
 
The remaining work to prove Proposition~\ref{Our Goal} is standard, see  \cite[Lemma~6.7 and Proposition~6.8]{PSZ}. We give the construction of the horizontal curve here but leave the details to interested readers.

For each $[a_l, b_l]$, we have constructed perturbations $\phi^l_1, \phi^l_2$ and $\phi^l_3$ on each $[a_l, b_l]$ satisfying all the properties in Proposition~\ref{New Perturbation}. Set 
\begin{equation}\label{final horizontal extension1}
	\hat{f}_i(x)  :=
     \begin{cases}
       f_i(x)+\phi^l_i(x) &\quad\text{if $x \in [a_l, b_l]$}\\
       f_i(x) &\quad\text{if $x \in K$} 
     \end{cases}
\end{equation}
for $i=1, 2, 3$ and 
\begin{equation}\label{final horizontal extension2}
	\hat{f}_{ij}(x)  :=
     \begin{cases}
       F_{ij}(a_l)+ 1/2\int_{a_l}^x \hat{f}_i\hat{f}_j'- \hat{f}_i'\hat{f}_j &\quad\text{if $x \in [a_l, b_l]$}\\
       F_{ij}(x) &\quad\text{if $x \in K$} 
     \end{cases}
\end{equation}
for $1\leq j<i\leq 3$. As in \cite[Lemma~6.7 and Proposition~6.8]{PSZ}, one can check that a map $\gamma : [-\text{min}K, \max K] \to \mathbb{G}_3$ defined by $\gamma_{i}:=\hat{f}_i$ for $i=1, 2, 3$ and $\gamma_{ij}:=\hat{f}_{ij}$ for $1 \leq j<i\leq 3$ is a $C^m$ horizontal curve such that for any $0 \leq k \leq m$, $\gamma^k_i|_{K}=F^k_i$ for all $1\leq i \leq 3$ and $\gamma^k_{ij}|_{K}=F^k_{ij}$ for all  $1 \leq j<i\leq 3$. This completes the proof of Proposition~\ref{Our Goal}.
\end{proof}

Collecting Lemma~\ref{horizontal polynomial}, Proposition~\ref{New Condition}, and Proposition~\ref{Our Goal}, we obtain a $C^m$ Whitney extension theorem for horizontal curves in $\mathbb{G}_3$ (Theorem~\ref{Cm Whitney G3}).

\section{Necessity: The Case $\mathbb{G}_r$}\label{Necessity Gr Section}
\subsection{The Generalized $A/V$ Condition in $\mathbb{G}_r$}
Let $r\geq 3$ and $m\geq 1$ be integers. In order to extend the generalized $A/V$ condition in Definition~\ref{Definition of generalized A/V} to the setting of $\mathbb{G}_r$ where $r\geq 3$, we first introduce some quantities. Let $(F_i)_{1 \leq i \leq r}$ and $(F_{ij})_{1\leq j<i\leq r}$ be jets of order $m$ on $K$. Fix $1 \leq j<i\leq r$. For collections of real numbers $c:=(c_k)_{k \in \{1, \cdots, r\}\setminus \{i, j\}}$, $\tilde{c}:=(\tilde{c}_k)_{k \in \{1, \cdots, r\}\setminus \{i, j\}}$ and $a, b \in K$, we set 
\begin{align}
	E_{ij}(a,b, c, \tilde{c}):=A_{ij}(a,b)&+\sum_{k=1}^{j-1} \tilde{c}_k A_{jk}(a,b)-\sum_{k>j, \ k\neq i}^r \tilde{c}_k A_{kj}(a,b) \notag \\
		&-\sum_{k=1, \ k\neq j}^{i-1} c_k A_{ik}(a,b)+ \sum_{k>i}^r c_k A_{ki}(a,b)\notag \\
		&+\sum_{k>n, \ k,n \in \{1, \cdots, r\}\setminus \{i, j\}}(c_n \tilde{c}_k-\tilde{c}_n c_{k})A_{kn}(a,b),
\end{align}
\begin{equation}
	\Delta_{i}(a, b, \tilde{c}):=\int_a^b|T_aF_i' -\sum_{k=1, k\neq i,j}^r \tilde{c}_k T_aF_k'|,
\end{equation}
and 
\begin{equation}
			\Delta_{j}(a, b, c):=\int_a^b|T_aF_j' -\sum_{k=1, k\neq i,j}^r c_k T_aF_k'|.
		\end{equation}

\begin{definition}\label{The generalized A/V in Gr}
	Let $(F_i)_{1 \leq i \leq r}$ and $(F_{ij})_{1\leq j<i\leq r}$ be jets of order $m$ on $K$. We say that $(F_i)_{1 \leq i \leq r}$ and $(F_{ij})_{1\leq j<i\leq r}$ satisfy the \emph{generalized $A/V$ condition} if for any compact set $A \subseteq \mathbb{R}$, the following holds: 
	\begin{equation}\label{The generalized A/V in Gr equation}
	\limsup_{\substack{ |a-b|\to 0 \\ a, b \in K}} \sup_{c_k, \tilde{c}_k \in A}\frac{E_{ij}(a,b, c, \tilde{c})}{(b-a)^{2m}+(b-a)^m\Big(\Delta_{i}(a, b, \tilde{c})+ \Delta_{j}(a, b, c)\Big)}=0
\end{equation} 
for all $1 \leq j<i\leq r$.
\end{definition}
\begin{remark}
	In the case when $r=3$, the above generalized $A/V$ condition (Definition~\ref{The generalized A/V in Gr}) coincides with that of the $\mathbb{G}_3$ case (Definition~\ref{Definition of generalized A/V}).
\end{remark}

\subsection{Our Main Result in $\mathbb{G}_r$}
The rest of the present paper is devoted to proving the following.

\begin{theorem}\label{Cm Whitney Gr}
	Let $K \subseteq \mathbb{R}$ be a compact set and $(F_{i})_{1\leq i \leq r}$ and $(F_{ij})_{1\leq j<i\leq r}$ be jets of order $m$ on $K$. 
Then there exists a $C^m$ horizontal curve in $\mathbb{G}_r$ such that $D^k \gamma_{i}|_{K}=F^k_{i}$ for every $0 \leq k \leq m$ and $1\leq i \leq r$ and $D^k \gamma_{ij}|_{K}=F^k_{ij}$ for every $0 \leq k \leq m$ and $1 \leq j<i\leq r$ if and only if
\begin{enumerate}
\item $(F_{i})_{1\leq i \leq r}$ and $(F_{ij})_{1\leq j<i\leq r}$ are Whitney fields of class $C^m$ on $K$,
\item for every $1 \leq k \leq m$ and $t\in K$ and all $1 \leq j <i\leq r$ we have
 \begin{equation}
\label{HorizAssume}
F_{ij}^{k}(t) = \mathcal{P}^k \left(F_{i}^0(t),F_{j}^0(t),F_{i}^1(t),F_{j}^1(t),\dots,F_{i}^{k}(t),F_{j}^{k}(t)\right),
\end{equation}
where $\mathcal{P}^k$ are the polynomials defined in \eqref{def of polynomials},
\item  and the jets $(F_{i})_{1\leq i \leq r}$ and $(F_{ij})_{1\leq j<i\leq r}$ satisfy the generalized $A/V$ condition \eqref{The generalized A/V in Gr equation}.
\end{enumerate}
\end{theorem}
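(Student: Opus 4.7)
The plan is to prove both directions of the equivalence, with necessity mirroring Proposition~\ref{New Condition} and sufficiency proceeding by induction on $r \geq 3$, using Theorem~\ref{Cm Whitney G3} as the base case.

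For necessity, items (1) and (2) follow immediately from smoothness of $\gamma$ together with Lemma~\ref{horizontal polynomial}. For the generalized $A/V$ condition (3), fix $1\le j<i\le r$ and coefficient tuples $c,\tilde c\in A^{r-2}$; a left translation $\tilde\gamma:=\gamma(a)^{-1}\cdot\gamma$ reduces everything to the case $\gamma(a)=0$ exactly as in Case~2 of Proposition~\ref{New Condition}, since $\tilde A_{kn}(a,b)=A_{kn}(a,b)$ and the relevant $\tilde\varepsilon'$ quantities are unchanged. Writing $\gamma_i=\sum_{k\neq i,j}\tilde c_k\gamma_k+\varepsilon_i$ and $\gamma_j=\sum_{k\neq i,j}c_k\gamma_k+\varepsilon_j$, I would substitute these into each integrand $\gamma_k\gamma_n'-\gamma_k'\gamma_n$ appearing in $A_{ij}$. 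The pure-$\tilde c$ terms reproduce the $A_{ik}$ pieces of $E_{ij}$, the pure-$c$ terms reproduce the $A_{kj}$ pieces, and the mixed $c\tilde c$ terms antisymmetrize into the coefficients $(c_n\tilde c_k-\tilde c_n c_k)$ attached to $A_{kn}$ in the definition of $E_{ij}$. What survives is a residual of the form $\tfrac{1}{2}\int_a^b(\varepsilon_i\varepsilon_j'-\varepsilon_i'\varepsilon_j)$ minus its Taylor-polynomial analog, which standard modulus-of-continuity bounds of the type used in \cite[Proof of Proposition~5.2]{PSZ} estimate by $C\alpha^2(b-a)^{2m}+C\alpha(b-a)^m(\Delta_i(a,b,\tilde c)+\Delta_j(a,b,c))$, uniformly in $c,\tilde c\in A$.

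For sufficiency, assume the theorem holds in $\mathbb{G}_{r-1}$ and take jets $(F_i)_{1\le i\le r}$ and $(F_{ij})_{1\le j<i\le r}$ on $K$ satisfying (1)--(3). Setting $c_r=\tilde c_r=0$ in Definition~\ref{The generalized A/V in Gr} shows the sub-jets indexed by $1\le k\le r-1$ satisfy the generalized $A/V$ condition in $\mathbb{G}_{r-1}$, so the inductive hypothesis yields a $C^m$ horizontal curve in $\mathbb{G}_{r-1}$ extending them. Classical Whitney extends $F_r$ to $\hat f_r\in C^m(\mathbb{R})$. Writing $I\setminus K=\bigcup_l(a_l,b_l)$, the remaining task is to construct $C^m$ perturbations $(\phi_i^l)_{1\le i\le r}$ on each $[a_l,b_l]$, vanishing at the endpoints to order $m$ and with a uniform modulus of continuity $\beta(b_l-a_l)\to 0$, so that the $r-1$ missing areas $F_{r1},\ldots,F_{r,r-1}$ are corrected while the $\binom{r-1}{2}$ areas already handled by induction remain untouched.

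The heart of the argument, and the main obstacle, is constructing these perturbations; I would generalize Proposition~\ref{New Perturbation} along the $\mathbb{G}_3$ blueprint. First apply Lemma~\ref{coefficients bounded polynomials} to relabel $TF_1',\ldots,TF_r'$ so that $\int_a^b|TF_1'|$ is largest and the successive Gram--Schmidt residuals $\inf_c\int|TF_k'-\sum_{i<k}c_iTF_i'|$ are ordered, and split into cases according to whether $\int|TF_1'|\ge(b-a)^m$ and how many of these residuals fall below $(b-a)^m$. When all relevant integrals are small the construction reduces to independent Heisenberg-type corrections as in Proposition~\ref{case5}. In the nondegenerate case I would fix $F_{r1},F_{r2},\ldots,F_{r,r-1}$ in order: at the $j$-th step, discretize $[a_l,b_l]$ into $N=N(m)$ equal subintervals, build bump functions $\eta_i$ supported on them, assemble the vectors $\alpha^l=(\int\eta_i f_l')_i\in\mathbb{R}^N$, and project in $\mathbb{R}^N$ onto the orthogonal complement of the span of the $\alpha^l$'s already used. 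This simultaneously makes the new perturbation $L^2$-orthogonal to every previously used $f_l'$ and, by a Markov-inequality lower bound exactly as in \eqref{linear independence}, produces a nonzero correction comparable to $\alpha(b-a)^m\inf_{c}\int|TF_j'-\sum c_k TF_k'|$. The projection coefficients lie in the compact set $A$ thanks to Lemma~\ref{coefficients bounded polynomials}, so the generalized $A/V$ condition applies with these coefficients and yields the bound on $E_{rj}$ that the perturbation machinery needs to absorb. The bookkeeping required to ensure that correcting $F_{rj}$ leaves both the earlier $F_{r1},\ldots,F_{r,j-1}$ and all lower $F_{ik}$ intact is where the induction hypothesis is used most delicately, because the orthogonality must be enforced against all $f_k'$ with $k\le r-1$ simultaneously. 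The final gluing, as in \cite[Lemma~6.7 and Proposition~6.8]{PSZ} and the end of the $\mathbb{G}_3$ proof, assembles the perturbed pieces with $\hat f$ into a global $C^m$ horizontal extension, closing the induction.
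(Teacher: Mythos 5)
Your necessity argument matches the paper's Proposition~\ref{necessary Gr} essentially line for line (left translation to reduce to $\gamma(a)=0$, then the algebraic cancellation of Lemma~\ref{algebraic lemma} applied both to $\gamma$ and its Taylor polynomial), so that half is fine. The sufficiency half, however, is organized differently from the paper in a way that leaves a genuine gap.

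The paper does \emph{not} induct on the extension theorem itself. It inducts on the perturbation statement, Proposition~\ref{New Perturbation Gr}, which is strictly stronger than ``there exists an extension'': it produces perturbations supported inside prescribed uniform $(r,R)$-good subsets, vanishing to order $m$ on the boundaries of each discrete subinterval $I^l_{ij}(k)$, and --- crucially --- orthogonal not only to the relevant $f_k'$ but also to an arbitrary auxiliary family $h_1,\dots,h_R \in L^2(a_l,b_l)$. When the proof of Proposition~\ref{New Perturbation Gr} for $\mathbb{G}_r$ invokes the inductive hypothesis for $\mathbb{G}_{r-1}$, it plugs $f_r'$ (or $f_{r-1}'$ in the second call) into the slot $h_{R+1}$. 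This is what forces the perturbations $\psi_i$, $\tilde\psi_i$ coming from the recursive call to be orthogonal to the derivative of the \emph{new} horizontal coordinate, so that they leave the new vertical areas $\mathcal{A}_{rj}$ untouched. Your plan of invoking the full $\mathbb{G}_{r-1}$ extension theorem as a black box and then perturbing loses exactly this control: the $\mathbb{G}_{r-1}$ extension is an opaque $C^m$ curve on each gap, with no guaranteed relationship to $f_r'$, no good-subset support structure, and no way to retroactively impose the orthogonality you say you need. You acknowledge this is the delicate point, but the theorem-level induction does not give you the tools to carry it out, and this is precisely why the paper formulates the recursion at the perturbation level with the $R$ and $h_k$ parameters.

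Two further concrete gaps in the same direction. First, your case split (``when all relevant integrals are small'' versus ``the nondegenerate case'') omits the mixed regime in which $\int_a^b|TF_1'|\ge(b-a)^m$ but some Gram--Schmidt residual $\inf_c\int|TF_k'-\sum_{i<k}c_iTF_i'|$ falls below $(b-a)^m$; this is the paper's Case~2 and is exactly where two horizontal indices must be perturbed simultaneously with full orthogonality to $f_1',\dots,f_r'$ and all the auxiliary $h$'s. Second, you write that the discretization uses $N=N(m)$ equal subintervals; the paper requires $L_1>2(r+R)$ bumps precisely because in the degenerate case you must impose $r+R$ orthogonality constraints on each of $\phi_i$ and $\phi_k$ and still have a nontrivial intersection $V_{r-1}\cap V_r$ to draw from, so the discretization parameter must grow with $r$ and $R$. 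Finally, the claim that the projection coefficients are bounded ``thanks to Lemma~\ref{coefficients bounded polynomials}'' is not a direct citation: Lemma~\ref{coefficients bounded polynomials} concerns polynomials in $L^2(a,b)$, whereas the bound you need is for the discretized vectors $\alpha^l\in\mathbb{R}^N$; the paper proves this separately (Lemma~\ref{bound on coefficients}) by combining the Markov inequality bound \eqref{linear independence} with Lemma~\ref{coefficients bounded polynomials}. None of these is an omission of detail --- they are the structural ingredients (perturbation-level recursion, auxiliary orthogonality budget, $r$-dependent discretization) that make the induction close, and without them the ``bookkeeping'' you defer cannot be completed.
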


\subsection{$C^m$ Horizontal Curves in $\mathbb{G}_r$ Satisfy the Generalized $A/V$ Condition}

 We first provide the following algebraic lemma.
\begin{lemma}\label{algebraic lemma}
Let $r\geq 3$. Let $(f_i)_{i=1, \cdots, r}$ be $C^1$ functions. For each $1 \leq j < i \leq r$ and collections of real numbers $(c_k)_{k \in \{1, \cdots, r\}\setminus \{i, j\} }$ and $(\tilde{c}_k)_{k \in \{1, \cdots, r\}\setminus \{i, j\}}$, set 
\begin{equation}
a_{ij}:=f_i f_j'-f_i'f_j	,
\end{equation}
\begin{equation}
			\epsilon_i:=f_i-\sum_{k=1, k\neq i,j}^r \tilde{c}_k f_k, \ \ \text{and} \ \ 
		\epsilon_j:=f_j-\sum_{k=1, k\neq i,j}^r c_k f_k.
		\end{equation}
 Then
	\begin{align}\label{algebraic equality}
		a_{ij}&+\sum_{k=1}^{j-1} \tilde{c}_k a_{jk}-\sum_{k>j, \ k\neq i}^r \tilde{c}_k a_{kj} -\sum_{k=1, \ k\neq j}^{i-1} c_k a_{ik}+ \sum_{k>i}^r c_k a_{ki} \notag \\
		&+\sum_{k>n, \ k,n \in \{1, \cdots, r\}\setminus \{i, j\}}(c_n \tilde{c}_k-\tilde{c}_n c_{k})a_{kn} \notag \\
		&=\epsilon_{j}'\epsilon_i-\epsilon_j \epsilon_i'.
	\end{align}	
\end{lemma}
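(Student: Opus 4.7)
The proof is a direct algebraic verification, and the plan is to expand the right-hand side $\epsilon_j'\epsilon_i-\epsilon_j\epsilon_i'$ using the definitions of $\epsilon_i$ and $\epsilon_j$, then rewrite every bracket of the form $f_\alpha f_\beta'-f_\alpha' f_\beta$ as $\pm a_{\gamma\delta}$ with $\gamma>\delta$ via the antisymmetry $f_\alpha f_\beta'-f_\alpha' f_\beta = -(f_\beta f_\alpha'-f_\beta' f_\alpha)$. The only nontrivial bookkeeping is keeping track of the excluded indices $\{i,j\}$ in every sum.

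Concretely, I first write
\[
\epsilon_j'\epsilon_i-\epsilon_j\epsilon_i' = \Bigl(f_j'-\sum_{k\neq i,j} c_k f_k'\Bigr)\Bigl(f_i-\sum_{n\neq i,j} \tilde c_n f_n\Bigr) - \Bigl(f_j-\sum_{k\neq i,j} c_k f_k\Bigr)\Bigl(f_i'-\sum_{n\neq i,j} \tilde c_n f_n'\Bigr),
\]
and expand. The terms of zeroth order in the $c$'s and $\tilde c$'s combine into $f_j'f_i - f_jf_i' = a_{ij}$. The terms linear in $\tilde c_n$ (and zeroth in $c_k$) give $\sum_{n\neq i,j}\tilde c_n\bigl(f_j f_n'-f_j' f_n\bigr)$; splitting the range $n\in\{1,\dots,r\}\setminus\{i,j\}$ into $n<j$ and $n>j$ (recall $j<i$, so the constraint $n\neq i$ only matters on the upper side) and applying antisymmetry yields exactly $\sum_{k=1}^{j-1}\tilde c_k a_{jk} - \sum_{k>j,\,k\neq i}\tilde c_k a_{kj}$. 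Symmetrically, the terms linear in $c_k$ produce $-\sum_{k=1,\,k\neq j}^{i-1} c_k a_{ik} + \sum_{k>i} c_k a_{ki}$.

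For the bilinear part, the cross terms give $\sum_{k,n\neq i,j} c_k\tilde c_n\bigl(f_k' f_n - f_k f_n'\bigr)$. The diagonal $k=n$ contribution vanishes, so I split the off-diagonal sum into $k>n$ and $k<n$; in the latter I swap the dummy labels $k\leftrightarrow n$, which turns the summand into $-c_n\tilde c_k a_{kn}$ with $k>n$. Combining with the $k>n$ piece produces the antisymmetric coefficient $c_n\tilde c_k - \tilde c_n c_k$ in front of $a_{kn}$, matching the last line on the left-hand side of \eqref{algebraic equality}.

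There is no genuine obstacle here; the statement is a purely formal identity and the proof reduces to careful index bookkeeping. The only place where one has to pause is verifying that the index splits around $j$ (for the $\tilde c$-sum) and around $i$ (for the $c$-sum) correctly reproduce both the ranges and the signs appearing in the statement, and that after the relabelling $k\leftrightarrow n$ the double sum ends up indexed over $k>n$ with $k,n\in\{1,\dots,r\}\setminus\{i,j\}$, exactly as written.
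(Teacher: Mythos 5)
Your proof is correct and follows essentially the same route as the paper: expand $\epsilon_j'\epsilon_i-\epsilon_j\epsilon_i'$, group by degree in the $c$'s and $\tilde c$'s, and use antisymmetry of $f_\alpha f_\beta'-f_\alpha'f_\beta$ to normalize each bracket to an $a_{\gamma\delta}$ with $\gamma>\delta$; the only difference is that you spell out the index bookkeeping a bit more explicitly than the paper does. One tiny slip in the write-up: after relabelling $k\leftrightarrow n$ in the $k<n$ part of the bilinear sum the summand becomes $+c_n\tilde c_k a_{kn}$, not $-c_n\tilde c_k a_{kn}$ (it is the original $k>n$ piece that contributes $-\tilde c_n c_k a_{kn}$), but you state the correct final coefficient $c_n\tilde c_k-\tilde c_n c_k$, so the argument goes through.
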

\begin{proof}
	This lemma follows by expanding the right hand side of \eqref{algebraic equality}, i.e., 
	\begin{align}
		\epsilon_{j}'\epsilon_i &-\epsilon_j \epsilon_i' =f_i f_j'-f_i'f_j - \sum_{k=1, k\neq i,j}^r \tilde{c}_k f_j'f_k + \sum_{k=1, k\neq i,j}^r \tilde{c}_k f_j f_k' \notag \\
		 &- \sum_{k=1, k\neq i,j}^r c_k f_i f_k' + \sum_{k=1, k\neq i,j}^r c_k f_i' f_k + \sum_{ k,k' \in \{1, \cdots, r\}\setminus \{i, j\}}c_k\tilde{c}_{k'}(f_k'f_{k'}-f_kf_{k'}') \notag \\
		 &=a_{ij}+\sum_{k=1}^{j-1} \tilde{c}_k a_{jk}-\sum_{k>j, \ k\neq i}^r \tilde{c}_k a_{kj} -\sum_{k=1, \ k\neq j}^{i-1} c_k a_{ik}+ \sum_{k>i}^r c_k a_{ki}\notag \\
		 &+ \sum_{ k,k' \in \{1, \cdots, r\}\setminus \{i, j\}}c_k\tilde{c}_{k'}(f_k'f_{k'}-f_kf_{k'}').
	\end{align}
	The last term is transformed into
	\begin{align}
		\sum_{ k,k' \in \{1, \cdots, r\}\setminus \{i, j\}}c_k\tilde{c}_{k'}(f_k'f_{k'}-f_kf_{k'}') &= \sum_{ k<k' \in \{1, \cdots, r\}\setminus \{i, j\}}(c_k \tilde{c}_{k'})(f_{k'}f_k'-f_{k'}'f_{k})\notag \\
		 &+ \sum_{ k > k' \in \{1, \cdots, r\}\setminus \{i, j\}} (c_k \tilde{c}_{k'})(f_{k'}f_k'-f_{k'}'f_{k})\notag \\
		 &= \sum_{ k<k' \in \{1, \cdots, r\}\setminus \{i, j\}}(c_k \tilde{c}_{k'})a_{k'k} - \sum_{ k > k' \in \{1, \cdots, r\}\setminus \{i, j\}} (c_k \tilde{c}_{k'})a_{kk'}\notag \\
		 &=\sum_{k>n, \ k,n \in \{1, \cdots, r\}\setminus \{i, j\}}(c_n \tilde{c}_k-\tilde{c}_n c_{k})a_{kn}.\notag 
	\end{align}
	This completes the proof.
\end{proof}

\begin{proposition}\label{necessary Gr}
Let $\gamma$ be a $C^m$ horizontal curve in $\mathbb{G}_r$ and $ K \subseteq \mathbb{R}$ be a compact set. Then the jets obtained by $F_i=(D^k \gamma_i|_{K})_{k=0, \cdots, m}$ for $1 \leq i \leq r$ and $F_{ij}=(D^k \gamma_{ij}|_{K})_{k=0, \cdots, m}$ for $1 \leq j<i\leq r$ satisfy the generalized $A/V$ condition \eqref{The generalized A/V in Gr equation}.
\end{proposition}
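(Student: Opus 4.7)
\medskip

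\noindent\textbf{Proof proposal.} The plan is to mimic the $\mathbb{G}_3$ argument of Proposition~\ref{New Condition}, with Lemma~\ref{algebraic lemma} replacing the explicit three-component expansions used in that proof. Fix $1 \leq j < i \leq r$ and a compact set $A \subseteq \mathbb{R}$. By the same left-translation trick as in Case~2 of Proposition~\ref{New Condition} (using that $\tilde{A}_{kl}(a,b) = A_{kl}(a,b)$ and that $\tilde{\epsilon}_l' = \epsilon_l'$ under $\tilde\gamma := \gamma(a)^{-1}\cdot\gamma$), I reduce to the case $\gamma(a)=0$, and in particular $F_k(a) = 0$ and $F_{kl}(a) = 0$ for all indices.

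Next, for any $(c_k)_{k \neq i,j}$ and $(\tilde c_k)_{k \neq i,j}$ in $A$, define $\epsilon_i := \gamma_i - \sum_{k \neq i,j} \tilde c_k \gamma_k$ and $\epsilon_j := \gamma_j - \sum_{k \neq i,j} c_k \gamma_k$. Applying Lemma~\ref{algebraic lemma} to $\gamma$ (with $a_{kl} = \gamma_k \gamma_l' - \gamma_k'\gamma_l$) and integrating using Lemma~\ref{ij-th component representation}, and separately applying Lemma~\ref{algebraic lemma} to the Taylor polynomials $T_a F_k$ (with $a_{kl} = (T_aF_k)(T_aF_l)' - (T_aF_k)'(T_aF_l)$) and integrating, I obtain the clean identity
\begin{equation}
E_{ij}(a,b,c,\tilde{c}) \;=\; \tfrac{1}{2}\int_a^b \bigl[(\epsilon_j'\epsilon_i - \epsilon_j\epsilon_i') - ((T_a\epsilon_j)'\, T_a\epsilon_i - T_a\epsilon_j\,(T_a\epsilon_i)')\bigr],
\end{equation}
where I use that the formal Taylor polynomial of $\epsilon_l$ at $a$ is $T_a F_l - \sum \tilde c_k T_a F_k$ (resp.\ with $c_k$), so that $(T_a\epsilon_i)' = T_a F_i' - \sum \tilde c_k T_a F_k'$, whose $L^1$ norm is precisely what appears in $\Delta_i(a,b,\tilde c)$.

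From this point the estimate proceeds by the standard Taylor-remainder splitting already invoked in \cite[Proof of Proposition~5.2]{PSZ}. For any $\varepsilon > 0$, uniform continuity on $K$ of $D^k\gamma_l$ gives $\delta > 0$ such that for $[a,b] \subset K$ with $b-a < \delta$,
\begin{equation}
|\gamma_l - T_a\gamma_l| \leq \varepsilon (b-a)^m, \qquad |\gamma_l' - (T_a\gamma_l)'| \leq \varepsilon (b-a)^{m-1}
\end{equation}
on $[a,b]$ for every $l$. Splitting the integrand as
\[
(\epsilon_j' - (T_a\epsilon_j)')\epsilon_i + (T_a\epsilon_j)'(\epsilon_i - T_a\epsilon_i) + (\epsilon_j - T_a\epsilon_j)\epsilon_i' + T_a\epsilon_j ((T_a\epsilon_i)' - \epsilon_i'),
\]
using $\epsilon_l(a)=0$ to control $|\epsilon_l|$ and $|T_a\epsilon_l|$ by $(b-a)$ times the $L^\infty$ norms of their derivatives, and applying the bounds above component-wise yields
\begin{equation}
|E_{ij}(a,b,c,\tilde{c})| \leq C\varepsilon^2 (b-a)^{2m} + C\varepsilon (b-a)^m \int_a^b \bigl(|(T_a\epsilon_i)'| + |(T_a\epsilon_j)'|\bigr),
\end{equation}
where $C = C(r,m,A)$ absorbs the sums over $c_k,\tilde c_k \in A$ via compactness of $A$. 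Dividing by the denominator in \eqref{The generalized A/V in Gr equation} and taking the supremum over $c,\tilde c \in A$ gives a bound by $2C\varepsilon$, which is the desired convergence.

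The main obstacle is purely bookkeeping: one must verify that the algebraic identity of Lemma~\ref{algebraic lemma}, when applied to both $\gamma$ and to the Taylor polynomials $(T_a F_k)$, produces exactly the combination $E_{ij}(a,b,c,\tilde c)$ after integration. The factor of $\tfrac12$ and the correction terms involving $F_k(a)$ in the definition of $A_{kl}$ (see \eqref{Aab}) vanish because of the reduction $\gamma(a) = 0$, so the identity takes the clean form displayed above; once this is established the analytic estimate is mechanical.
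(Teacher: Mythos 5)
Your proof is correct and is exactly the argument the paper has in mind: the paper's own proof of Proposition~\ref{necessary Gr} is the one-line instruction to combine Lemma~\ref{algebraic lemma} with the translation-to-the-origin reduction from Proposition~\ref{New Condition} and the remainder estimates of \cite[Proof of Proposition~5.2]{PSZ}, and your write-up fleshes out precisely those three ingredients. The only cosmetic slip is the claim that ``the factor of $\tfrac12$ \dots vanish'' --- the $\tfrac12$ does not vanish (it appears in your displayed identity); what vanishes under the reduction $\gamma(a)=0$ are the correction terms in \eqref{Aab} involving $F_k(a)$.
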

\begin{proof}
	Use Lemma~\ref{algebraic lemma} and follow the arguments in Proposition~\ref{New Condition} and the standard estimates in \cite[Proof of Proposition~5.2]{PSZ}.
\end{proof}
From Lemma~\ref{horizontal polynomial} and Propsition~\ref{necessary Gr}, for every $C^m$ horizontal curve $\gamma: \mathbb{R}\to \mathbb{G}_r$ and any compact set $K \subseteq \mathbb{R}$, the jets obtained by $F_i=(D^k \gamma_i|_{K})_{k=0, \cdots, m}$ and $F_{ij}=(D^k \gamma_{ij}|_{K})_{k=0, \cdots, m}$ satisfy all of the three properties listed in Theorem~\ref{Cm Whitney Gr}. Hence the rest of the present paper is devoted to proving the converse, i.e., Whitney fields satisfying \eqref{HorizAssume} and the generalized $A/V$ condition are always extended by a horizontal curve in $\mathbb{G}_r$.

\section{Sufficiency: The Case $\mathbb{G}_r$}\label{Sufficiency Gr Section}

\subsection{Outline of the Proof}
Throughout this section, $m\geq 1$ is a fixed integer.  Let us first outline our proof compared with the case of $\mathbb{G}_3$. We explain it step-by-step.

\textbf{Step 1}: Let $(F_i)_{1 \leq i \leq r}$ and $(F_{ij})_{1 \leq j <i \leq r}$ be jets of order $m$ on a compact set $K$. Recall that our strategy in the $\mathbb{G}_3$ case was that after extending jets $(F_i)_{1 \leq i \leq r}$ on the horizontal layer, we perturb those extensions outside the compact set $K$ to correct vertical areas.  We follow this strategy again in $\mathbb{G}_r$.  In other words, we write $[\min K, \max K]\setminus K := \cup_{l=1}^{\infty} (a_l, b_l)$ where $(a_l, b_l)$ are disjoint open intervals.  Let us denote the $C^m$ Whitney extension of $F_i$ by $f_i$. Since we focus on each $[a_l, b_l]$, we denote it by $[a, b]$ for simplicity. After translating to the origin, our goal is to construct $C^{\infty}$ perturbations $\phi_1, \cdots, \phi_r$ on  $[a, b]$ that satisfy
\begin{equation}\label{outline key 1}
F_{ij}(b)-F_{ij}(a)=\frac{1}{2}\int_{a}^{b} (f_i+\phi_i)(f_j'+\phi_j')-(f_i'+\phi_i')(f_j+\phi_j)
\end{equation}
for all $1 \leq j <i\leq r$. Since these perturbations $\phi_1, \cdots, \phi_r$  are supposed to be zero at the endpoints of $[a, b]$, by integration by parts, \eqref{outline key 1} is transformed into
\begin{equation}\label{outline key 2}
	\mathcal{A}_{ij}:=F_{ij}(b)-F_{ij}(a)-\frac{1}{2}\int_{a}^{b} f_i f_j'-f_if_j'=\int_{a}^{b} \phi_i f_j'-\phi_j f_i' + \phi_i \phi_j'.
\end{equation}

\textbf{Step 2}:  In order to achieve \eqref{outline key 2} for all vertical areas $(\mathcal{A}_{ij})_{1 \leq j<i \leq r}$, we divide each interval $[a, b]$ into finitely many pieces $(Q_{ij})_{1\leq j<i\leq r}$ which we call \emph{uniform good subsets}, see Definition~\ref{a good partition definition}. Each $Q_{ij}$ consists of subintervals of $[a,b]$ whose length is comparable with $|b-a|$ where the comparable constant is independent of $[a, b]$. This partition allows us to apply the Markov inequality and linear algebra techniques on each $Q_{ij}$. Also, we assume that the appropriate ordering \eqref{ordering1} and \eqref{ordering2} holds similar to how we assumed that $\int_a^b |TF_1'|$ is the largest integral in the case of $\mathbb{G}_3$.

 \textbf{Step 3}: Our main step is to prove Proposition~\ref{New Perturbation Gr} (construction of perturbations) by mathematical induction with respect to $r\geq 3$. Note that the starting case for the induction is $\mathbb{G}_3$ which was already covered in Section~\ref{Sufficiency G3 Section}. There are two cases that we need to examine:
\begin{itemize}
	\item \textbf{Case 1}: When $\inf_{c_1, \cdots, c_{k-1} \in \mathbb{R}} \int_{a}^{b}|TF_k'-c_1TF_1'-\cdots-c_{k-1}TF_{k-1}|\geq (b-a)^m$ holds  for all $1 \leq k \leq r-1$,
	\item \textbf{Case 2}: When $\inf_{c_1, \cdots, c_{k-1} \in \mathbb{R}} \int_{a}^{b}|TF_k'-c_1TF_1'-\cdots-c_{k-1}TF_{k-1}| < (b-a)^m$ holds for some $1 \leq k \leq r-1$.
 \end{itemize}

 \textbf{Step 4}:  In both cases, thanks to the induction hypothesis, we can find $C^{\infty}$ perturbations $\psi_1, \cdots, \psi_{r-1}$ easily and achieve \eqref{outline key 1} for all $1 \leq j<i \leq r-1$. This allows us to assume that  $\mathcal{A}_{ij}=0$ for all $1 \leq j<i\leq r-1$, see \eqref{reduction by mathematical induction case 1} and \eqref{reduction by mathematical induction case 2}. Therefore, our task is to construct another family of $C^{\infty}$ perturbations $\xi_1, \cdots, \xi_r$ on $[a, b]$ that satisfy 
\begin{equation}\label{outline key 3}
	\mathcal{A}_{rj}=\int_a^b \xi_r f_j'-\xi_j f_r' + \xi_r \xi_j'
\end{equation}
for all $1 \leq j \leq r-1$ \emph{without changing $\mathcal{A}_{ij}$ for all $1\leq j<i\leq r-1$}.

\textbf{Step 5}: Using the induction hypothesis of Proposition~\ref{New Perturbation Gr} again, we further achieve \eqref{outline key 3} for the vertical areas $\mathcal{A}_{r1}, \cdots, \mathcal{A}_{rr-2}$ \emph{while  holding \eqref{outline key 1} for $\mathcal{A}_{ij}$ for all $1\leq j<i\leq r-1$}, see Lemma~\ref{correcting everything except the last component case 1} and Lemma~\ref{correcting everything except the last component case 2}.  This is done by removing the components related to $r-1$. Namely, define Whitney fields $(G_i)_{1 \leq i \leq r-1}$ and $(G_{ij})_{1 \leq j<i \leq r-1}$ by
	 \begin{equation}
	 	G_i:=F_i \ \ \text{for $1 \leq i\leq r-2$} \ \ \text{and} \ \ G_{r-1}:= F_{r},
	 \end{equation}
	 and 
\begin{equation}
G_{ij}:=F_{ij} \ \ \text{for $1 \leq j<i\leq r-2$} \ \ \text{and} \ \ G_{r-1j}:= F_{rj} \ \ \text{for $1 \leq j\leq r-2$},
\end{equation}
	  and viewing them as Whitney fields in $\mathbb{G}_{r-1}$ that satisfy the generalized $A/V$ condition in $\mathbb{G}_{r-1}$. Thanks to the properties of perturbations listed in Proposition~\ref{New Perturbation Gr}, we can correct $\mathcal{A}_{r1}, \cdots, \mathcal{A}_{rr-2}$ \emph{without changing $\mathcal{A}_{ij}$ for all $1\leq j<i\leq r-1$}. This allows us to further assume that $\mathcal{A}_{rj}=0$ for all $1\leq j \leq r-2$.

\textbf{Step 6}: Our final task is to find perturbations $\xi_{r-1}$ and $\xi_r$ that satisfy \eqref{outline key 1} for the vertical area $\mathcal{A}_{rr-1}$ \emph{without changing all the other areas}.

 \textbf{Step 7}: We first treat \textbf{Case 1}. Recall that in $\mathbb{G}_3$, if $\int_a^b |TF_1'|$ is the largest integral and 
\[
\inf_{c_2 \in \mathbb{R}}\int_a^b|TF_2'-c_2TF_1'| \geq \inf_{c_3 \in \mathbb{R}}\int_a^b|TF_3'-c_3TF_1'|\vee(b-a)^m, 
\]
 then we can achieve \eqref{outline key 2} for $\mathcal{A}_{32}$ by \emph{only using $\phi_3$ (and setting $\phi_2\equiv 0$) after we constructed perturbations for $\mathcal{A}_{21}$ and $\mathcal{A}_{31}$}, see \eqref{def of final perturbation}. Similarly in $\mathbb{G}_r$, we can achieve \eqref{outline key 2} for $\mathcal{A}_{rr-1}$ by \emph{only constructing $\xi_r$} and setting $\xi_{r-1}\equiv 0$ under the appropriate assumption of ordering, see \eqref{ordering1} and \eqref{ordering2}.  The perturbation $\xi_r$ obviously do \emph{not} affect  $(\mathcal{A}_{ij})_{1\leq j<i\leq r-1}$. Furthermore, we can construct $\xi_r$ in a way that $\xi_r$ does not affect $\mathcal{A}_{r1}, \cdots, \mathcal{A}_{rr-2}$ by a discretization process. This is possible due to the fact that the remained area that needs to be created for $\mathcal{A}_{rr-1}$ is controlled by the generalized $A/V$ condition and \eqref{estimate after extension}.

\textbf{Step 8}: We next deal with \textbf{Case 2}. Recall that in the $\mathbb{G}_3$ case, if 
\[
\inf_{c_2 \in \mathbb{R}} \int_a^b |TF_2'-c_2TF_1'| \vee \inf_{c_3 \in \mathbb{R}}\int_a^b |TF_3'-c_3TF_1'| < (b-a)^m,
\]
then we needed to use both $\phi_2$ and $\phi_3$ to correct $\mathcal{A}_{32}$ while we could make $\phi_2$ and $\phi_3$ orthogonal to $f_1', f_2'$ and $f_3'$, see Proposition~\ref{Case1}. 
Also in \textbf{Case 2}, we end up constructing nontrivial pertubations $\xi_{r-1}$ and $\xi_r$ in order for \eqref{outline key 2} to hold for $\mathcal{A}_{rr-1}$. However, we can find such $\xi_{r-1}, \xi_r$ that are orthogonal to all of $f_1', \cdots, f_r'$. Hence these perturbations do \emph{not} affect $(\mathcal{A}_{ij})_{1\leq j<i \leq r-1}$ and $(\mathcal{A}_{rj})_{1 \leq j \leq r-2}$.

\textbf{Step 9}: The final $C^{\infty}$ perturbations $\phi_1, \cdots, \phi_r$ are defined by the combination of $\psi_1, \cdots, \psi_{r-1}$ and $\xi_1, \cdots, \xi_r$, see \eqref{case 1 our final perturbation Gr} for \textbf{Case 1} and \eqref{case 2 our final perturbation Gr} for \textbf{Case 2}. This completes the proof of Proposition~\ref{New Perturbation Gr}.

\textbf{Step 10}: Finally, we construct a $C^m$ horizontal curve as in  the $\mathbb{G}_3$ case (\eqref{final horizontal extension1 Gr} and \eqref{final horizontal extension2 Gr}), which finishes the proof.

\subsection{Good Subsets}
Inspired by the case of $\mathbb{G}_3$, we first split every interval into several pieces in a nice way and construct perturbations on it later. The role of the parameter $R$ will appear in Proposition~\ref{New Perturbation Gr}.
\begin{definition}\label{a good partition definition}
	Let $r \geq 3$, $R \geq 0$ be integers and let $([a_l, b_l])_{l\in \mathbb{N}}$ be a family of disjoint intervals. We say that $(Q^l_{ij})_{1\leq j <i \leq r, l \in \mathbb{N}}$ are \emph{uniform $(r, R)$-good subsets} if there exist uniform constants $L_1, L_2 >0$ such that 
	\begin{enumerate}
	\item $Q^l_{ij}:= \sqcup_{k=1}^{L_1} I^l_{ij}(k)$ for all $1 \leq j<i \leq r$ and each $l \in \mathbb{N}$, where all $I^l_{ij}(k)$ are disjoint closed subintervals of $[a_l, b_l]$ except for endpoints,
			\item  we have that
			\begin{equation}
	|Q^l_{kn}\cap Q^l_{k'n'}|  =
       0 \ \ \ \text{if $k\neq k'$ or $n \neq n'$,}
\end{equation}

\item the lower bound $2(r+R) <  L_1$ holds,
\item we have that $|I^l_{ij}(k)|= L_2(b_l-a_l)$ for all $1 \leq j<i \leq r$ and all $1\leq k\leq L_1$,
\item and for  every $l \in \mathbb{N}$ and $1\leq j<i\leq r$, it holds that for any $I \subseteq [a_l, b_l]$ with $|I|\geq (b_l-a_l)/(4m^2)$, we have that $I^l_{ij}(k) \subseteq I $ for some $1 \leq k \leq L_1$.
		\end{enumerate}
		Here $|\cdot|$ is the Lebesgue measure.

\end{definition}
	\begin{remark}
	In the above definition, we do \emph{not} require $\cup_{1\leq j<i\leq r} Q_{ij}^l = [a_l, b_l]$, see Property~$1$ in Definition~\ref{a good partition definition}. This definition will be helpful when constructing perturbations later.
	\end{remark}

	We prove the existence and some properties of  uniform $(r, R)$-good subsets for a family of disjoint intervals $([a_l, b_l])_{l \in \mathbb{N}}$.
	
	\begin{lemma}\label{a good partition properties}
		Let $r \geq 3$, $R \geq 0$ be integers and let $([a_l, b_l])_{l \in \mathbb{N}}$ be a family of disjoint intervals. Then there exist  uniform $(r, R)$-good subsets. Moreover, for any $(r, R)$-good subsets $(Q^l_{ij})_{1 \leq j<i\leq r, l \in \mathbb{N}}$ with $L_1, L_2>0$, we have the following properties:
		\begin{enumerate}
		\item for any integer $R'>R$, there exist uniform $(r, R')$-good subsets $(\tilde{Q}^l_{ij})_{1 \leq j<i\leq r, l \in \mathbb{N}}$ with $\tilde{L}_1,\tilde{L}_2>0$ such that 
		\begin{itemize}
			\item $\tilde{Q}_{ij}^l \subseteq Q_{ij}^l$ for all $1 \leq j<i\leq r$ and all $l \in \mathbb{N}$,
			\item $\tilde{L}_1 = 2^M L_1$ and $\tilde{L_2} = L_2/2^M $ hold for some $M:=M(r, R')$,
		\end{itemize}
		\item there exist uniform $(r, R)$-good subsets $(P^l_{ij})_{1 \leq j<i\leq r, l \in \mathbb{N}}$  with $L_1', L_2'>0$ and $(\tilde{P}^l_{ij})_{1 \leq j<i\leq r, l \in \mathbb{N}}$  with $\tilde{L}_1, \tilde{L}_2>0$ such that
		\begin{itemize}
			\item  $Q^l_{ij}= P^l_{ij} \cup \tilde{P}^l_{ij}$ and $|P^l_{ij}\cap  \tilde{P}^l_{ij}|=0$ for all $1 \leq j<i \leq r$ and each $l \in \mathbb{N}$,
			\item $L_1'=\tilde{L}_1= L_1$ and $L_2'= \tilde{L}_2=L_2/2$ hold,
		\end{itemize}
 			\item for any $1 \leq k \leq r$, $(\tilde{Q}^l_{ij})_{1 \leq j <i \leq r-1, l \in \mathbb{N}}$ defined by
	 \begin{equation}
	\tilde{Q}^l_{ij}  :=
     \begin{cases}
       Q^l_{ij} &\quad\text{if $1\leq j<i<k$ and $l \in \mathbb{N}$,}\\
       Q^l_{i+1j}  &\quad \text{if $1\leq j \neq k<i\leq r$ and $l \in \mathbb{N}$},
     \end{cases}
\end{equation}
		 are  uniform $(r-1, R)$-good subsets with $L_1, L_2>0$.
		\end{enumerate}
	\end{lemma}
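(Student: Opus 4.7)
My plan is to construct good subsets explicitly via a round-robin assignment, then derive the refinement, halving, and reindexing properties by direct combinatorial arguments. For existence, I would partition each $[a_l,b_l]$ into $N := \tfrac{r(r-1)}{2}\,L_1$ equal closed subintervals, choose $L_1$ large enough that both $L_1 > 2(r+R)$ and $N \geq 4m^2\bigl(\tfrac{r(r-1)}{2}+2\bigr)$ hold (for example $L_1 = 8m^2 + 2(r+R) + 1$), and assign the subintervals in round-robin fashion to the $\tfrac{r(r-1)}{2}$ pairs $(i,j)$. Setting $L_2 = 1/N$ immediately gives properties 1--4. For property 5, any subinterval $I \subseteq [a_l,b_l]$ with $|I| \geq (b_l - a_l)/(4m^2)$ contains at least $\lfloor N/(4m^2) \rfloor - 1 \geq \tfrac{r(r-1)}{2}$ consecutive complete subintervals of the partition, so it includes a full cycle of the round-robin and therefore at least one subinterval assigned to each pair.

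For the refinement in property 1, given $(Q^l_{ij})$ with parameters $L_1,L_2$, I would pick $M = M(r,R')$ so that $2^M \geq 2(r+R')$ and subdivide each $I^l_{ij}(k)$ into $2^M$ equal sub-subintervals. Then $\tilde Q^l_{ij} = Q^l_{ij}$ as a set, so the containment $\tilde Q^l_{ij} \subseteq Q^l_{ij}$ is trivial, the new counts are $\tilde L_1 = 2^M L_1 > 2(r+R')$ and $\tilde L_2 = L_2/2^M$, and property 5 is inherited because each refined sub-subinterval is contained in an original $I^l_{ij}(k)$ which already witnessed property 5 at the threshold $(b_l - a_l)/(4m^2)$. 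For property 2, I would split each $I^l_{ij}(k)$ into its left and right halves, collecting the left halves into $P^l_{ij}$ and the right halves into $\tilde P^l_{ij}$; this keeps $L_1$ fixed and halves $L_2$, and property 5 transfers because any $I$ as above contains a complete original $I^l_{ij}(k)$ by hypothesis, hence both of its halves.

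For property 3, the prescribed reindexing simply drops the pairs involving the removed index $k$ and relabels the remaining $\tfrac{(r-1)(r-2)}{2}$ pairs into an $(r-1)$-indexed family. Since one takes a strict subfamily of the $Q^l_{ij}$, properties 1, 2, 4 are inherited verbatim, property 3 follows from $L_1 > 2(r+R) > 2((r-1)+R)$, and property 5 is preserved since it held for each retained pair of the original family. The only substantive step—and thus the main obstacle—is the existence step, where one must simultaneously satisfy the combinatorial lower bound $L_1 > 2(r+R)$, the Markov-based length threshold $(b_l-a_l)/(4m^2)$ inherited from Lemma~\ref{nice subinterval}, and uniformity over all intervals $[a_l,b_l]$. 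Once these constants are chosen generously at the outset, the subsequent refinement (property 1), halving (property 2), and reindexing (property 3) are routine.
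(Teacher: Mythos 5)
Your construction is correct and essentially the same as the paper's: the paper first divides $[a_l,b_l]$ into $L_1$ equal blocks and then subdivides each block into $r(r-1)/2$ subpieces, assigning the $p$-th subpiece of every block to the $p$-th pair, which is literally the same as your round-robin on $N=L_1\cdot r(r-1)/2$ equal subintervals. Your verification of Property 5 (counting that $I$ captures a full cycle of $r(r-1)/2$ consecutive subintervals) is a cosmetic variant of the paper's (showing $I$ contains a full block, hence every pair's subpiece inside it), and the refinement, halving, and relabeling arguments for Properties 1--3 of the lemma coincide with the paper's.
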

	\begin{proof}
		We first prove the existence of uniform $(r, R)$-good subsets. Since one has to do the same argument for each $[a_l, b_l]$, we remove the index $l \in \mathbb{N}$ and simply denote $[a_l, b_l]$,  $Q^l_{ij}$ and $I^l_{ij}(k)$ by $[a, b]$, $Q_{ij}$ and $I_{ij}(k)$, respectively. It will be clear from the proof that $L_1$ and $L_2$ chosen below are independent of $l \in \mathbb{N}$.
		Take a minimum number $L_1 \in \mathbb{N}$ such that both $L_1> 2(r+R)$ and $L_1 > 8m^2$ hold. We divide $[a, b]$ into $L_1$ subintervals of equal length, i.e., $[a, b]=\cup_{1\leq k \leq L_1} I(k)$ where $I(k)$ are disjoint except for endpoints. We further split each $I(k)$ into $r(r-1)/2$ subintervals of equal length, denoted by $(I_{ij}(k))_{1\leq j<i\leq r}$. Finally, set 
	\begin{equation}
		Q_{ij}:= \sqcup_{k=1}^{L_1} I_{ij}(k)
	\end{equation}
	for each $1\leq j<i\leq r$. Note that for all $1 \leq j <i \leq r$ and all $1\leq k \leq L_1$ we have
	\[
	|I_{ij}(k)|=L_2(b-a)\ \ \text{where} \ \  L_2:=\frac{1}{L_1}\frac{2}{r(r-1)}.
	\]
	 We only check Property 5 in Definition~\ref{a good partition definition} since the other properties are obvious. For any subinterval $I \subseteq  [a, b]$ with $|I| \geq (b-a)/(4m^2)$,  there exists $I(k)$ such that $I(k) \subseteq I$ for some $1 \leq k \leq L_1$ since $L_1>8m^2$. So it is clear that for every $1 \leq j<i \leq r$, $I_{ij}(k)$ is contained in $I$. This completes the proof of existence of uniform $(r, R)$-good subsets. 
	 
	 We next prove Property $1$ in the statement. Let $R'>R$ be an integer and let $(Q^l_{ij})_{1 \leq j<i\leq r, l \in \mathbb{N}}$  be uniform $(r, R)$-good subsets with $L_1, L_2>0$. Again we omit the index $l \in \mathbb{N}$.  We split each $I_{ij}(k)$ into two subintervals of equal size, denoted by $I^1_{ij}(k)$ and $I^2_{ij}(k)$. Note that 
	 \[
	 |I^1_{ij}(k)|=|I^2_{ij}(k)|=\frac{L_2}{2}(b-a).
	 \]
	 for all $1 \leq j<i\leq r$ and all $1\leq k \leq N$. For each $1 \leq j<i\leq r$, set 
	 \[
	 \tilde{Q}_{ij}:=\cup_{k=1}^{2L_1} \tilde{I}_{ij}(k),
	 \]
	 where $\tilde{I}_{ij}(k):=I^1_{ij}(k)$ for $1 \leq k \leq L_1$ and $\tilde{I}_{ij}(L_1+k):=I^2_{ij}(k)$ for $1 \leq k \leq L_1$. If $2(r+R')< \tilde{L}_1:=2L_1$, then we obtained desired uniform $(r, R')$-good subsets. If not, then we continue doing this procedure. In other words, take $M:=M(r, R') \in \mathbb{N}$ which is the smallest number satisfying $2(r+R')<2^M L_1$ and do the above argument $M$ times so that we get uniform $(r, R')$-good subsets with $\tilde{L}_1, \tilde{L}_2>0$ such that 
	 \[
	 \tilde{L}_1 = 2^M L_1 \ \ \text{and} \ \ \tilde{L}_2 = \frac{L_2}{2^M}(b-a).
 	 \]
 	 
 	 To prove Property $2$ in the statement, for any uniform $(r, R)$-good subsets with $L_1, L_2>0$,  we split each $I_{ij}(k)$ into $I^1_{ij}(k)$ and $I^2_{ij}(k)$ in the same way as above and setting
 	 \[
 	 P_{ij}:=\sqcup_{k=1}^{L_1} I^1_{ij}(k) \ \ \text{and} \ \ \tilde{P}_{ij}:=\sqcup_{k=1}^{L_1} I^2_{ij}(k).
 	 \]
 	 completes the proof. Since Property $3$ in the statement is obvious, it completes the proof.
	\end{proof}

\subsection{Proof of Sufficiency in $\mathbb{G}_r$}
\begin{proposition}\label{Our Ultimate Goal}
	Let $K \subseteq \mathbb{R}$ be compact and $(F_i)_{1\leq i \leq r}, (F_{ij})_{1\leq j<i \leq r}$ be jets of order $m$ on $K$. Assume that
	\begin{enumerate}
		\item $(F_i)_{1\leq i \leq r}, (F_{ij})_{1\leq j<i \leq r}$ are Whitney fields of class $C^m$.
		\item for every $1 \leq k \leq m$ and $t\in K$ and all $1 \leq j <i\leq r$ we have
 \begin{equation}
F_{ij}^{k}(t) = \mathcal{P}^k \left(F_{i}^0(t),F_{j}^0(t),F_{i}^1(t),F_{j}^1(t),\dots,F_{i}^{k}(t),F_{j}^{k}(t)\right),
\end{equation}
where $\mathcal{P}^k$ are the polynomials defined in \eqref{def of polynomials},
\item and the generalized $A/V$ condition \eqref{The generalized A/V in Gr equation} holds.
\end{enumerate}
Then there exists a $C^m$ horizontal curve $\gamma$ such that for each $0\leq k \leq m$, $D^k\gamma_i|_{K}=F^k_i$ for all $1\leq i \leq r$ and $D^k\gamma_{ij}|_{K}=F_{ij}^k$ for all $1\leq j<i\leq r$.
\end{proposition}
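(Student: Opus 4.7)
The plan is to proceed by mathematical induction on $r \geq 3$, with the base case $r=3$ already being Proposition~\ref{Our Goal}. Throughout, I will follow the strategy of \cite{PSZ} and Section~\ref{Sufficiency G3 Section}: apply the classical Whitney extension theorem (Theorem~\ref{Classical whitney extension}) to each horizontal jet $F_i$ to obtain $f_i \in C^m(\mathbb{R})$, then on each open gap $(a_l, b_l)$ of $[\min K, \max K]\setminus K$, construct $C^\infty$ perturbations $\phi_1, \dots, \phi_r$ vanishing to order $m$ at the endpoints so that the modified curve $(f_i+\phi_i)$ has the correct vertical areas. A standard absorption of the quadratic terms via integration by parts reduces the matching condition on each gap to producing perturbations with uniform decay $\beta(b_l-a_l)\to 0$ that satisfy $\mathcal{A}_{ij}=\int_a^b \phi_i f_j' - \phi_j f_i' + \phi_i \phi_j'$ for every $1\le j<i\le r$, where $\mathcal{A}_{ij}$ is the residual area after the horizontal extension. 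As in the $\mathbb{G}_3$ case, one chooses a modulus of continuity $\alpha$ dominating \eqref{good modulus G3 1}--\eqref{good modulus G3 3} and the supremum in the generalized $A/V$ condition \eqref{The generalized A/V in Gr equation} over a compact parameter set $A$ chosen via Lemma~\ref{coefficients bounded polynomials}.

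The main technical step will be an analogue of Proposition~\ref{New Perturbation} in $\mathbb{G}_r$, which I would prove by a second induction on $r$ using the uniform $(r,R)$-good subsets from Lemma~\ref{a good partition properties}. After relabelling, I may assume an ordering of the indices such that $\int_a^b |TF_1'|$ is maximal and, for each $k$, the quantity $\inf_{c_1,\dots,c_{k-1}\in\mathbb{R}}\int_a^b |TF_k' - \sum_{i<k} c_i TF_i'|$ is maximal among the remaining indices; Lemma~\ref{coefficients bounded polynomials} then guarantees that the infima are realized by coefficients in a fixed compact set $A$. I split on whether (Case 1) $\inf \int_a^b |TF_k' - \sum c_i TF_i'| \ge (b-a)^m$ for every $1\le k\le r-1$, or (Case 2) this fails for some $k$. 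Applying the induction hypothesis to the Whitney field $(G_i)_{i\le r-1}:=(F_1,\dots,F_{r-2},F_r)$ with $(G_{ij})$ defined by the corresponding relabelled vertical components (which still satisfies the generalized $A/V$ condition in $\mathbb{G}_{r-1}$ by Proposition~\ref{necessary Gr} applied abstractly, i.e., by direct verification from the algebraic identity in Lemma~\ref{algebraic lemma}), I first obtain perturbations $\psi_1,\dots,\psi_{r-1}$ supported on a designated sub-family of the good subsets that correct all areas $\mathcal{A}_{ij}$ with $1\le j<i\le r-1$, and then a second application to a different sub-family corrects $\mathcal{A}_{r1},\dots,\mathcal{A}_{rr-2}$ without disturbing the previously fixed areas.

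It then remains to correct the single residual area $\mathcal{A}_{rr-1}$ without touching any other. In Case 1, I would mimic Proposition~\ref{case2}: on a set $Q_{rr-1}$ of good subintervals disjoint from those already used, build bump functions $\eta_i$ and set $\xi_r=\sum v_i \eta_i$ where the vector $v=(v_i)$ is chosen to lie in the orthogonal complement of $(\alpha^l)_{l\ne r-1}$ in $\mathbb{R}^{L_1}$, with $\alpha^l_i := \int \eta_i f_l'$, so that $\xi_r \perp f_l'$ in $L^2$ for $l\ne r-1$ while producing the target pairing with $f_{r-1}'$; the linear-independence estimate analogous to \eqref{linear independence} together with \eqref{estimate after extension G3} ensures the correction is achievable with the required size bound $C\alpha$. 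In Case 2, I would instead mimic Proposition~\ref{Case1}: on a single good subinterval, pair two families of bumps $\xi_k,\eta_k$ and select coefficient vectors in a joint intersection of orthogonal subspaces (to $\langle f_1',\dots,f_r'\rangle$ in $L^2$), whose dimension remains positive because $2(r+R)<L_1$ in the definition of good subsets; the product term $\int \xi_r \xi_{r-1}'$ then supplies the residual $\mathcal{A}_{rr-1}$, which is $O(\alpha(b-a)^{2m})$ by plugging $c_k=\tilde c_k=0$ into \eqref{The generalized A/V in Gr equation}.

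The main obstacle, and the reason the induction structure is necessary, is the bookkeeping required to ensure that the perturbation fixing $\mathcal{A}_{rr-1}$ does not perturb any of the $r(r-1)/2-1$ previously corrected areas. The generalized $A/V$ condition \eqref{The generalized A/V in Gr equation} is exactly strong enough to control the \emph{cross-terms} $(c_n \tilde c_k - \tilde c_n c_k)A_{kn}$ appearing in Lemma~\ref{algebraic lemma}, which for $r\ge 4$ couple \emph{all} vertical areas even though the group law of $\mathbb{G}_r$ has no mixed commutators; this is the surprise flagged in the introduction, and it is what forces the generalized $A/V$ condition to involve arbitrary compact $A$ rather than just vanishing coefficients. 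Once Proposition~\ref{New Perturbation Gr} is established, the construction of the global $C^m$ horizontal extension is routine: define $\hat f_i$ and $\hat f_{ij}$ piecewise by analogues of \eqref{final horizontal extension1} and \eqref{final horizontal extension2}, and verify $C^m$ regularity across the accumulation points of the gaps via the uniform modulus $\beta$ exactly as in \cite[Lemma~6.7, Proposition~6.8]{PSZ}.
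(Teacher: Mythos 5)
Your proposal follows the paper's approach essentially exactly: induction on $r$ via the strengthened perturbation lemma (Proposition~\ref{New Perturbation Gr}) with uniform $(r,R)$-good subsets and auxiliary $L^2$ orthogonality constraints, ordering by the iterated infima, a case split on whether all those infima exceed $(b-a)^m$, two applications of the inductive hypothesis (dropping index $r$ and then index $r-1$), and a final orthogonally-constrained correction of $\mathcal{A}_{rr-1}$. The one slip is expository: the first application of the inductive hypothesis must use $(F_1,\dots,F_{r-1})$ to fix the areas $\mathcal{A}_{ij}$ with $i,j\le r-1$, and only the second uses $(G_i)=(F_1,\dots,F_{r-2},F_r)$ to fix $\mathcal{A}_{r1},\dots,\mathcal{A}_{r\,r-2}$; the field $(G_i)$ cannot touch areas involving the index $r-1$, so it cannot be what corrects $\mathcal{A}_{ij}$ for all $1\le j<i\le r-1$ as your phrasing suggests.
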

 \begin{proof}
We start the proof as in the case of $\mathbb{G}_3$. Let $I:=[\text{min}K, \max K]$ and $I\setminus K := \cup_{l=1}^{\infty} (a_l, b_l)$ where $(a_l, b_l)$ are disjoint open intervals. 

We fix arbitrary $l \in \mathbb{N}$. For each $1 \leq k \leq r-2$, choose $i_k \in \{1, \cdots, r\}$ such that
\begin{equation}
		\int_{a_l}^{b_l}|TF_{i_1}'| \geq \max_{\tilde{k}\in \{1, \cdots, r\}}\Big\{\int_{a_l}^{b_l}|TF_{\tilde{k}}'|\Big\} \notag
	\end{equation}
	and
\begin{equation}
		\inf_{c_{i_1}, \cdots, c_{i_{k}} \in \mathbb{R}} \int_{a_l}^{b_l}|TF_{i_{k+1}}'-\sum_{l=1}^{k}c_{i_l} TF_{i_l}'| \geq \max_{\tilde{k} \in \{1, \cdots, r\} \setminus \{i_1, \cdots, i_k\}} \Big\{ \inf_{c_{i_1}, \cdots, c_{i_{k}} \in \mathbb{R}}  \int_{a_l}^{b_l}|TF_{\tilde{k}}'-\sum_{l=1}^{k}c_{i_l} TF_{i_l}'|\Big\} \notag 
	\end{equation}
	In order to simplify our notation, we may assume that 
	\begin{equation}\label{ordering1}
		\int_{a_l}^{b_l}|TF_1'| \geq \max_{\tilde{k}\in \{1, \cdots, r\}}\Big\{\int_{a_l}^{b_l}|TF_{\tilde{k}}'|\Big\}
	\end{equation}
	and 
	\begin{equation}\label{ordering2}
		\inf_{c_{1}, \cdots, c_{k} \in \mathbb{R}}\int_{a_l}^{b_l}|TF_{k+1}'-\sum_{l=1}^{k}c_l TF_l'| \geq \max_{\tilde{k} \in \{k+1, \cdots, r\}} \inf_{c_{1}, \cdots, c_{k} \in \mathbb{R}}\Big\{\int_{a_l}^{b_l}|TF_{\tilde{k}}'-\sum_{l=1}^{k}c_l TF_l'|\Big\}
	\end{equation}
	for any $1\leq k \leq r-2$.

The essential part of this section is to prove the following construction of perturbations by mathematical induction with respect to $r\geq 3$. Since we construct perturbations on each $[a_l, b_l]$ individually, without loss of generality, we may assume that \eqref{ordering1} and \eqref{ordering2} hold whenever $l \in \mathbb{N}$ is fixed.

\begin{proposition}\label{New Perturbation Gr}
	Let $r \geq 3$, $R \geq 0$ be integers and let $(F_i)_{1\leq i \leq r}, (F_{ij})_{1\leq j<i \leq r}$ be jets with the properties given in Proposition~\ref{Our Ultimate Goal}. For any uniform $(r, R)$-good subsets $(Q^l_{ij})_{1 \leq j <i \leq r}$ with $L_1, L_2>0$ and any $C^m$ functions $f_1, \cdots, f_r$ where $D^kf_i|_{K}=F_i^k$ for all $1\leq i \leq r$ and $0\leq k \leq m$, there exists a modulus of continuity $\beta$ for which the following holds:  for each $l \in \mathbb{N}$ and  any $h_1, \cdots, h_R \in L^2(a_l, b_l)$, if \eqref{ordering1} and \eqref{ordering2} hold, then there exist $C^{\infty}$ perturbations $\phi_1, \cdots, \phi_r$ on $[a_l, b_l]$  such that 
	\begin{enumerate}
		\item $D^k \phi_1|_{\partial I^l_{ij}(n)}=\cdots=D^k \phi_r|_{\partial I^l_{ij}(n)}=0 $ for all $0\leq k \leq m$, all $1\leq n \leq L_1$ and all $1\leq j<i\leq r$,
		\item $\max_{[a_l, b_l]}\{ |D^k \phi_1|, \cdots, |D^k \phi_r|\} \leq \beta(b_l-a_l)$ for all $0\leq k \leq m$,
		\item $\phi_1 \equiv \cdots \equiv \phi_r \equiv 0$ on $[a_l, b_l]\setminus \cup_{1 \leq j<i \leq r}Q^l_{ij}$,
		\item If $F_{ij}(b_l)-F_{ij}(a_l)=\frac{1}{2}\int_{a_l}^{b_l}f_i f_j'-f_i'f_j$ for some $1 \leq j<i\leq r$, then $\phi_i\equiv \phi_j \equiv 0$ on $Q^l_{ij}$,
		\item If $\inf_{c_1, \cdots, c_{k-1} \in \mathbb{R}} \int_{a_l}^{b_l}|TF_k'-c_1TF_1'-\cdots-c_{k-1}TF_{k-1}|\geq (b_l-a_l)^m$ for some $1 \leq k \leq r-1$ (the above is understood as $\int_a^b |TF_1'| \geq (b_l-a_l)^m$ in the case of $k=1$), then for each $i>k$,
		\begin{itemize}
		\item $\phi_i \perp f_{\tilde{k}}'$ on $L^2(Q^l_{ik})$ for all $1\leq {\tilde{k}} \leq k-1$,
		\item $\phi_j\equiv 0$ on $Q^l_{ik}$ for all $j \neq i$,
		\end{itemize}
	
		\item  If $\inf_{c_1, \cdots, c_{k-1} \in \mathbb{R}}\int_{a_l}^{b_l}|TF_k'-c_1TF_1'-\cdots-c_{k-1}TF_{k-1}|< (b_l-a_l)^m$ for some $1 \leq k \leq r-1$, then for each $i>k$,
		\begin{itemize}
			\item $\phi_i, \phi_k \perp f_{\tilde{k}}'$ on $L^2(Q^l_{ik})$ for all $1\leq \tilde{k} \leq r$,
			\item $\phi_i, \phi_k \perp h_{\tilde{k}}$ on $L^2(Q^l_{ik})$ for all $1\leq {\tilde{k}} \leq R$,
			\item $\phi_j\equiv 0$ on $Q^l_{ik}$ for all $j \neq i, k$,
		\end{itemize}
		\item $F_{ij}(b_l)-F_{ij}(a_l)=\int_{a_l}^{b_l}(f_i+\phi_i)(f_j'+\phi_j')-(f_i'+\phi_i')(f_j+\phi_j)$ for all $1\leq j<i\leq r$.
	\end{enumerate}
\end{proposition}
\begin{proof}

	Take a modulus of continuity $\alpha$ so that for all $a, x \in K$, $0 \leq k \leq m$ and $1\leq i \leq r$, we have
\begin{equation}\label{good modulus Gr 1}
	|D^k F_i(x)-D^k T_a F_i(x)| \leq \alpha(|x-a|)|x-a|^{m-k},
\end{equation}
\begin{equation}\label{good modulus Gr 2}
	|D^k f_i(x)-D^k T_a F_i(x)|\leq \alpha(|x-a|)|x-a|^{m-k},
\end{equation}
and
\begin{equation}\label{good modulus Gr 3}
	|D^k f_i(x)-D^kf_i(a)| \leq \alpha(x-a).
\end{equation}
Moreover, by \eqref{The generalized A/V in Gr equation}, we further assume that for a fixed compact set $A \subseteq \mathbb{R}$, we have that for every $1\leq j < i \leq r$ and $a, b \in K$ 
\begin{equation}
	 \sup_{c_k, \tilde{c}_k \in A}\frac{|E_{ij}(a,b, c, \tilde{c})|}{(b-a)^{2m}+(b-a)^m\Big(\Delta_{i}(a, b, \tilde{c})+ \Delta_{j}(a, b, c)\Big)}\leq \alpha(b-a), 
\end{equation} 
where  $c=(c_k)_{k \in \{1, \cdots, r\} \setminus \{i, j\}}, \tilde{c}=(\tilde{c}_k)_{k \in \{1, \cdots, r\} \setminus \{i, j\}} \subseteq A$.

In the rest of this proof, we use $C$ to denote a constant which depends on $K$, $A$, $r$, $R$, $L_1$, $L_2$ and $m$. The constant $\tilde{C}$ is reserved to denote a constant which only depends on $K$, $r$, $R$, $L_1$, $L_2$ and $m$, i.e., not on $A$. The compact set $A$ is chosen to be an interval $[-\tilde{C}_1, \tilde{C}_1]$ where the constant $\tilde{C}_1$ is the largest possible $\tilde{C}$ in the proof. We also set $C_1$ to be the largest possible $C$ in the proof. After choosing $\tilde{C}_1$ and the compact set $A$, we can find the constant $C_1$. Then by the same reason in \cite[Remark~6.4]{PSZ}, we always assume that $\alpha(b_l-a_l) < 1/(2C_1)\vee 1/(2\tilde{C}_1)$ by removing finitely many intervals.

We start constructing perturbations on each interval $[a_l,b_l]$. Since we fix each interval and construct them, we simply denote $[a_l, b_l], A_{ij}(a_l,b_l), \alpha(b_l-a_l)$, $T_{a_l} F_i$, $E_{ij}(a_l, b_l, c, \tilde{c})$, $\mathcal{E}_{ij}(a_l, b_l, c, \tilde{c})$, $\Delta_i(a_l, b_l, \tilde{c})$, and $\Delta_j(a_l, b_l, c)$ by $[a, b], A_{ij}, \alpha$, $TF_i$, $E_{ij}(c, \tilde{c})$, $\mathcal{E}_{ij}(c, \tilde{c})$, $\Delta_i(\tilde{c})$, and $\Delta_j(c)$ respectively. It will be clear that the modulus of continuity $\beta$  obtained is independent of $l \in \mathbb{N}$.

	 As in \cite[Claim~6.3]{PSZ}, we may assume that $F_k(a)=F_{ij}(a)=0$ for $1\leq k \leq r$ and $1\leq j<i\leq r$.

	 For $1 \leq j<i\leq r$, set
\[
\mathcal{A}_{ij}:=F_{ij}(b)-F_{ij}(a)-\frac{1}{2}\int_a^bf_if_j'-f_i'f_j.
\]
Define  
\begin{align}
	\mathcal{E}_{ij}(c, \tilde{c}):=\mathcal{A}_{ij}+\sum_{k=1}^{j-1} \tilde{c}_k \mathcal{A}_{jk}&-\sum_{k>j, \ k\neq i}^r \tilde{c}_k \mathcal{A}_{kj} -\sum_{k=1, \ k\neq j}^{i-1} c_k \mathcal{A}_{ik}+ \sum_{k>i}^r c_k \mathcal{A}_{ki}\notag \\
		&+\sum_{k>n, \ k,n \in \{1, \cdots, r\}\setminus \{i, j\}}(c_n \tilde{c}_k-\tilde{c}_n c_{k})\mathcal{A}_{kn}.
\end{align}
for all $1 \leq j<i\leq r$. The argument similar to the proof of Proposition~\ref{New Condition} gives us the estimate
\begin{align}\label{estimate after extension}
|\mathcal{E}_{ij}(c, \tilde{c})|\leq C \alpha(b-a)^{2m}+ C \alpha (b-a)^m(\Delta_i( \tilde{c})+\Delta_j(c))
\end{align}
for any $c=(c_k)_{k \in \{1, \cdots, r\}\setminus \{i, j\} },\tilde{c}=(\tilde{c}_k)_{k \in \{1, \cdots, r\}\setminus \{i, j\}} \subseteq A$ and all $1\leq j<i\leq r$.
	
	Note that in the statement we have two parameters, $r \geq 3$ and $R \geq 0$. The case when $r=3$ and $R\geq 0$ can be easily verified from the straightforward modification of the proof of Proposition~\ref{New Perturbation}. Hence we use mathematical induction with respect to $r$. Let $r\geq 4$. We assume that Proposition~\ref{New Perturbation Gr} holds true up to $r-1$ and all $R \geq 0$. We verify the case for $r$ and for all $R \geq 0$.  We divide into two cases.
	
	\hfill
	
	 \textbf{Case 1}: When $\inf_{c_1, \cdots, c_{k-1} \in \mathbb{R}} \int_{a}^{b}|TF_k'-c_1TF_1'-\cdots-c_{k-1}TF_{k-1}|\geq (b-a)^m$ for all $2 \leq k \leq r-1$.

	For given uniform $(r, R)$-good subsets $(Q_{ij})_{1 \leq j <i \leq r}$ with $L_1, L_2>0$, take uniform $(r, R)$-good subsets $(P^1_{ij})_{1 \leq j<i\leq r}$ with $L_1', L_2'>0$ and $(P^2_{ij})_{1 \leq j<i \leq r}$ with $\tilde{L}_1, \tilde{L}_2>0$ as in Property~$2$ of Lemma~\ref{a good partition properties}. Note that $L_1'= \tilde{L}_1 = L^1$ and $L_2'= \tilde{L}_2= L_2/2$.   By Lemma~\ref{a good partition properties} again, $(P^1_{ij})_{1 \leq j<i\leq r-1}$ are uniform $(r-1, R)$-good subsets. In order to use the induction hypothesis, we choose
	\begin{itemize}
		\item jets to be $(F_i)_{1\leq i \leq r-1}$ and $(F_{ij})_{1\leq j<i \leq r-1}$,
		\item $C^m$ extensions of $F_1, \cdots, F_{r-1}$ to be $f_1 \cdots, f_{r-1}$,
		\item $(r-1, R)$-uniform good subsets to be $(P^1_{ij})_{1 \leq j<i \leq r-1}$.
	\end{itemize}
	  One can easily see that the above jets $(F_i)_{1\leq i \leq r-1}$ and $(F_{ij})_{1\leq j<i \leq r-1}$ can be viewed as Whitney fields in $\mathbb{G}_{r-1}$ and satisfy the ordering(\eqref{ordering1} and  \eqref{ordering2}) and the generalized $A/V$ condition in $\mathbb{G}_{r-1}$ by plugging $c_{r-1}=\tilde{c}_{r-1}=0$ in \eqref{The generalized A/V in Gr equation}. By the induction hypothesis with the above inputs, we can find a modulus of continuity $\beta$ and perturbations $\psi_1, \cdots, \psi_{r-1}$ on $[a, b]$  satisfying the properties in the statement of Proposition~\ref{New Perturbation Gr} with the above inputs. 
	
	We now set $\tilde{f}_i:=f_i+\psi_i$ and $\tilde{\alpha}:=\alpha+\beta$.
	Note that since $\psi_1 \equiv \cdots \equiv \psi_{r-1}\equiv 0$ on $[a, b]\setminus \cup_{1 \leq j<i \leq r-1} P^1_{ij}$,  we have 
	\begin{equation}\label{updating f does not change case 1}
		\tilde{f}_i \equiv f_i \ \ \text{on}  \ \ \cup_{1 \leq j<i \leq r} P^2_{ij}.
	\end{equation} 
	Since we will construct perturbations on $\cup_{1 \leq j<i \leq r-1} P^2_{ij}$ later, we may assume that 
	\begin{equation}\label{reduction by mathematical induction case 1}
	 	\mathcal{A}_{ij}=0 \ \ \text{for all $1\leq j<i\leq r-1$},
	 \end{equation}
	and we keep \eqref{good modulus Gr 1}, \eqref{good modulus Gr 2}, \eqref{good modulus Gr 3}, and \eqref{estimate after extension} by updating Whitney extensions $f_i$ and the modulus of continuity $\alpha$ to $\tilde{f}_i$ and $\tilde{\alpha}$, respectively. Note that if $\tilde{\alpha}(b-a) < 1/(2C_1)\vee 1/(2\tilde{C}_1)$ fails, then we remove finitely many intervals again to keep assuming that $\tilde{\alpha}(b-a) < 1/(2C_1)\vee 1/(2\tilde{C}_1)$.
	
	Using Property $3$ in Lemma~\ref{a good partition properties} with $k=r-1$, $(\tilde{P}^2_{ij})_{1 \leq j<i \leq r-1}$ defined by 
	  \begin{equation}
	\tilde{P}^2_{ij}  :=
     \begin{cases}
       P^2_{ij} &\quad\text{if $1\leq j<i\leq r-2$,}\\
       P^2_{rj}  &\quad \text{if $i=r-1$ and $1\leq j \leq r-2$, } 
     \end{cases}
\end{equation}
	are uniform $(r-1, R)$-good subsets. We also define Whitney fields $(G_i)_{1 \leq i \leq r-1}$ and $(G_{ij})_{1 \leq j<i \leq r-1}$ by
	 \begin{equation}\label{def of Gi 1}
	 	G_i:=F_i \ \ \text{for $1 \leq i\leq r-2$} \ \ \text{and} \ \ G_{r-1}:= F_{r},
	 \end{equation}
	 and 
\begin{equation}\label{def of Gi 2}
G_{ij}:=F_{ij} \ \ \text{for $1 \leq j<i\leq r-2$} \ \ \text{and} \ \ G_{r-1j}:= F_{rj} \ \ \text{for $1 \leq j\leq r-2$}.
\end{equation}
	 Then $(G_i)_{1 \leq i \leq r-1}$ and $(G_{ij})_{1 \leq j<i \leq r-1}$ can be viewed as Whitney fields in $\mathbb{G}_{r-1}$ and satisfy the generalized $A/V$ condition in $\mathbb{G}_{r-1}$ by plugging $c_{r-1}=\tilde{c}_{r-1}=0$ in \eqref{The generalized A/V in Gr equation}. Moreover, they also satisfy the ordering \eqref{ordering1} and \eqref{ordering2}. In order to use the induction hypothesis again, we choose
	\begin{itemize}
		\item jets to be $(G_i)_{1\leq i \leq r-1}$ and $(G_{ij})_{1\leq j<i \leq r-1}$,
		\item $C^m$ extensions of $G_1, \cdots, G_{r-1}$ to be $g_1:=\tilde{f}_1, \cdots, g_{r-2}:=\tilde{f}_{r-2}, g_{r-1}:=\tilde{f}_{r}$,
		\item $(r-1, R)$-uniform good partitions to be $(\tilde{P}^2_{ij})_{1 \leq j<i \leq r-1}$.
	\end{itemize}
	 By the induction hypothesis with the above inputs, we can find a modulus of continuity $\tilde{\beta}$ and $C^{\infty}$ perturbations $\tilde{\psi}_1, \cdots, \tilde{\psi}_{r-1}$ on $[a, b]$ satisfying the properties in the statement of Proposition~\ref{New Perturbation Gr} with the above inputs.

	  Since these perturbations $\tilde{\psi}_1, \cdots, \tilde{\psi}_{r-1}$ may affect the vertical areas for $1\leq j<i \leq r-1$ when we further update $\tilde{f}_1, \cdots, \tilde{f}_r$, we have to check the following.

	 \begin{lemma}\label{correcting everything except the last component case 1}
	 Set $\xi_i:=\tilde{\psi}_i$ for all $1 \leq i \leq r-2$,  $\xi_{r-1}\equiv 0$ and $\xi_r:=\tilde{\psi}_{r-1}$ on $[a, b]$.
	 	The above perturbations do not affect the vertical areas for all $1\leq j<i \leq r-1$ while correcting $(\mathcal{A}_{rj})_{1 \leq j \leq r-2}$, i.e., we have that for all $1 \leq j<i\leq r-1$,
	 	\begin{equation}\label{vertical areas not affected 1 case 1}
	 		0=\mathcal{A}_{ij}=\int_a^b \xi_i \tilde{f}_j'-\tilde{f}_i'\xi_j+ \xi_i \xi_j' \ \Big(\overset{\eqref{updating f does not change case 1}}{=}\int_a^b \xi_i f_j'-f_i'\xi_j+ \xi_i \xi_j' \Big),
	 	\end{equation}
	 	and for all  $1\leq j \leq r-2$,
	 	\begin{equation}\label{vertical areas not affected 2 case 1}
	 		\mathcal{A}_{rj}=\int_a^b \xi_r \tilde{f}_j'-\tilde{f}_r'\xi_j+ \xi_r \xi_j' \ \Big( \overset{\eqref{updating f does not change case 1}}{=} \int_a^b \xi_i f_j'-f_i'\xi_j+ \xi_i \xi_j' \Big).
	 	\end{equation}
	 \end{lemma}
	 \begin{proof}
	 It is obvious that \eqref{vertical areas not affected 2 case 1} holds for all $1\leq j \leq r-2$ by the induction hypothesis. So we only verify \eqref{vertical areas not affected 1 case 1} for all $1 \leq j<i\leq r-1$.

	 We first note that $\tilde{\psi}_1\equiv \cdots \equiv \tilde{\psi}_{r-2}\equiv 0$ on $\cup_{1 \leq j<i \leq r-2} \tilde{P}^2_{ij}$ since $\mathcal{A}_{ij}=0$ for all $1 \leq j < i \leq r-2$, see Property $4$ in Proposition~\ref{New Perturbation Gr}. Hence we check the behavior of perturbations $\tilde{\psi}_1, \cdots, \tilde{\psi}_{r-2}$ on $\cup_{1 \leq j \leq r-2} \tilde{P}^2_{r-1j}$.

	 	Fix arbitrary $1\leq j \leq r-2$. Since
	 	\[
	 	 \inf_{c_1, \cdots, c_{k-1} \in \mathbb{R}} \int_{a}^{b}|TG_k'-c_1TG_1'-\cdots-c_{k-1}TG_{k-1}'|\geq (b-a)^m
	 	\]
	 	 for all $1 \leq k \leq r-2$, from Property $5$ in Proposition~\ref{New Perturbation Gr}, we know that $\xi_1\equiv \cdots \equiv \xi_{r-2}\equiv 0$ on $ \tilde{P}^2_{r-1j}$. Hence we have $\xi_1\equiv \cdots \equiv \xi_{r-2}\equiv 0$ on $\cup_{1 \leq j \leq r-2} \tilde{P}^2_{r-1j}$ and conclude that $\xi_1 \equiv \cdots \equiv \xi_{r-1} \equiv 0$ on $[a, b]$.  This implies \eqref{vertical areas not affected 1 case 1}.
	 \end{proof}
	 
	 We now set $\tilde{\tilde{f}}_i:=\tilde{f}_i+\xi_i$ for $i=1, \cdots, r$.
	 By the above lemma, we may further assume that $\mathcal{A}_{rj}=0$ for all $1 \leq j \leq r-2$ by updating $(\tilde{f}_i)_{1 \leq i \leq r}$ and the modulus of continuity $\alpha$ to $(\tilde{\tilde{f}}_i)_{1 \leq i \leq r}$ and $\tilde{\tilde{\alpha}}:=\tilde{\alpha}+\tilde{\beta}$ again.

	 Note that $\xi_r\equiv 0$ on $P^2_{rr-1}$ since $P^2_{rr-1} \subseteq [a, b]\setminus \cup_{1 \leq j <i\leq r-1} (P^1_{ij} \cup \tilde{P}^2_{ij})$. The rest of the proof is devoted to correcting the last $\mathcal{A}_{rr-1}$ component by constructing a perturbation $\xi_r$ on $P^2_{rr-1}$.

	  Recall now that $\mathcal{A}_{ij}=0$ for all $1 \leq j <i\leq r-1$ and $\mathcal{A}_{rk}=0$ for all $1 \leq k \leq r-2$ since we updated $f_1, \cdots, f_r$ to $\tilde{\tilde{f}}_1, \cdots, \tilde{\tilde{f}}_r$. Hence we have that
	\begin{align}
		|\mathcal{A}_{rr-1}| &=|\mathcal{E}_{rr-1}(c, \tilde{c})| \notag \\
		\overset{\eqref{estimate after extension}}&{\leq} C \tilde{\tilde{\alpha}} (b-a)^{2m}+ C \tilde{\tilde{\alpha}} (b-a)^m(\Delta_r(\tilde{c})+\Delta_{r-1}(c)) \notag \\
		\overset{\textbf{Case 1}}&{\leq} C\tilde{\tilde{\alpha}} (b-a)^m(\Delta_r(\tilde{c})+\Delta_{r-1}(c))
	\end{align}
		for any $c, \tilde{c}\subseteq A$. By taking the infimum over $c, \tilde{c} \subseteq A$, the above implies that 
		\begin{align}\label{bound of the coeeficient of the last perturbation1}
			|\mathcal{A}_{rr-1}| &\leq C \tilde{\tilde{\alpha}} (b-a)^m \Big( \inf_{\tilde{c} \subseteq A}\Delta_r(\tilde{c})+ \inf_{c \subseteq A}\Delta_{r-1}(c)\Big)  \notag \\ 
			\overset{\text{Lemma~\ref{coefficients bounded polynomials}}}&{\leq} C \tilde{\tilde{\alpha}} (b-a)^m \Big( \inf_{\tilde{c} \subseteq \mathbb{R}}\Delta_r(\tilde{c})+ \inf_{c \subseteq \mathbb{R}}\Delta_{r-1}(c)\Big) \notag \\
			\overset{\eqref{ordering2}}&{\leq} C \tilde{\tilde{\alpha}} (b-a)^m  \inf_{c \subseteq \mathbb{R}}\Delta_{r-1}(c).
		\end{align}

	Note that $\tilde{\tilde{f}}_i=f_i$ on $P^2_{rr-1}$ for $i=1, \cdots, r$. Our goal is to construct a smooth perturbation $\xi_r$ on $P^2_{rr-1}$ such that
	\begin{align}\label{Case 1 Gr our goal for the last perturbation}
		\mathcal{A}_{rr-1}=\int_{P^2_{rr-1}} \xi_r \tilde{\tilde{f}}_{r-1}'\ \Big(=\int_{P^2_{rr-1}} \xi_r f_{r-1}'\Big) \ \ \text{and} \ \ \xi_r \perp \tilde{\tilde{f}}_1', \cdots, \tilde{\tilde{f}}_{r-2}' \  \ \text{on $P^2_{rr-1}$}.
	\end{align}

	To construct a perturbation $\xi_r$ on $P^2_{rr-1}=\sqcup_{k=1}^{\tilde{L}_1} I_{rr-1}(k)$, we first set a $C^{\infty}$ function $\eta_k$ supported on each $I_{rr-1}(k)$ as in \cite[Proof of Lemma~6.5]{PSZ}, i.e., for all $1 \leq k \leq \tilde{L}_1$ we have that
	\begin{enumerate} 
		\item $D^i\eta_k|_{\partial I_{rr-1}(k)}=0$  where $\partial I_k$ is the set of endpoints of $I_{rr-1}(k)$,
		\item $|D^i \eta_k| \leq \tilde{C} \tilde{\tilde{\alpha}}$ on $I_{rr-1}(k)$,
		\item $|\eta_k| \geq \tilde{C}^{-1}\tilde{\tilde{\alpha}} (b-a)^m$ on the middle third of $I_{rr-1}(k)$.
	\end{enumerate} 
	For any $l=1,\cdots, r-1$ and any $1 \leq k \leq \tilde{L}_1$, we set 
\[
\alpha^l_k:=\int_a^b \eta_k \tilde{\tilde{f}}_l'\ \Big( =\int_a^b \eta_k f_l'\Big)
\]
and
	\begin{equation}
		\alpha^l:=(\alpha^l_i)_{i =1, \cdots, N} \in \mathbb{R}^{N}.
	\end{equation}
	where $N:= \tilde{L}_1$. We consider the orthogonal basis of the linear span of $(\alpha^l)_{l=1}^{r-1}$ by the Gram-Schmidt process, i.e., 
	\begin{equation}\label{gram schmidt Gr}
		\tilde{\alpha}^1:=\alpha^1,\ \ \tilde{\alpha}^2:=\alpha^2-\text{Pr}_{\langle \alpha^1 \rangle}(\alpha^2), \ \cdots ,\ \tilde{\alpha}^{r-1}:=\alpha^{r-1}-\text{Pr}_{\langle \alpha^1, \cdots, \alpha^{r-2}\rangle}(\alpha^{r-1}). 
	\end{equation}
	Note that one can prove that $||\alpha^l||_{\mathbb{R}^N} \neq 0$ for all $1 \leq l \leq r-1$ in the same way as in the case of $\mathbb{G}_3$, see Lemma~\ref{coefficients bounded G3}. By Lemma~\ref{projection fact}, for each $1 \leq l \leq r-1$, $\tilde{\alpha}^l$ can be written as
	\begin{align}\label{the choice of coeeficients Gr}
		\tilde{\alpha}^l &=\alpha^l-\frac{\langle \alpha^l, \tilde{\alpha}^1 \rangle_{\mathbb{R}^N}}{||\tilde{\alpha}^1||^2_{\mathbb{R}^N}}\tilde{\alpha}^1-\cdots -\frac{\langle \alpha^l, \tilde{\alpha}^{l-1} \rangle_{\mathbb{R}^N}}{||\tilde{\alpha}^{l-1}||^2_{\mathbb{R}^N}}\tilde{\alpha}^{l-1} \notag  \\
		&=\alpha^l-\Big( \sum_{k=1}^{l-1}c_{k,l} \alpha_k \Big)
	\end{align}
	 where $c_{k, l}$ are coefficients obtained by expanding the above. We claim that all of the constants $c_{k, l}$ with $ 1\leq l\leq r-1$ and $1\leq k \leq l-1$ are uniformly bounded by some uniform constant $\tilde{C}$. For this sake, we have the following lemma.

	\begin{lemma}\label{bound on coefficients}
	There exists $\tilde{C}$ such that for every $1\leq l \leq r-1$ and $1 \leq k < l \leq r-1$, we have
		\begin{equation}\label{bound on coefficients equation Gr}
		\Big|\frac{\langle \alpha^l, \tilde{\alpha}^k \rangle_{\mathbb{R}^N}}{||\tilde{\alpha}^k||^2_{\mathbb{R}^N}} \Big| \leq \tilde{C}.
	\end{equation}  
	 In particular, there exists another constant $\tilde{C}$ (possibly larger than the constant $\tilde{C}$ in \eqref{bound on coefficients equation Gr} above) such that the constants $c_{k, l}$ with $ 1\leq l\leq r-1$ and $1\leq k \leq l-1$ in \eqref{the choice of coeeficients Gr} are uniformly bounded by $\tilde{C}$.
	\end{lemma}
	\begin{proof}
	The argument is similar to the proof of Lemma~\ref{coefficients bounded polynomials} but we provide the proof for the sake of completeness.
	We prove this by mathematical induction. The case when $1=k<l \leq r-1$ follows by the argument in the $\mathbb{G}_3$ case, see Lemma~\ref{coefficients bounded G3}. For a fixed $k>1$, suppose that 
	\begin{equation}\label{coefficient bounded Gr induction assumption}
		\Big|\frac{\langle \alpha^l, \tilde{\alpha}^i \rangle_{\mathbb{R}^N}}{||\tilde{\alpha}^i||^2_{\mathbb{R}^N}} \Big| \leq \tilde{C}
	\end{equation}  
	for all $1 \leq i\leq k < l \leq r-1$. 

	Our goal is to prove that 
	\begin{equation}
		\Big|\frac{\langle \alpha^l, \tilde{\alpha}^{k+1} \rangle_{\mathbb{R}^N}}{||\tilde{\alpha}^{k+1}||^2_{\mathbb{R}^N}} \Big| \leq \tilde{C}
	\end{equation}
	for any $k+1 < l \leq r-1$.

	Since $\tilde{\alpha}^{k+1} \in \langle \tilde{\alpha}^1, \cdots, \tilde{\alpha}^{k} \rangle^{\perp}$, we have that
	\begin{equation}\label{key bound for coefficient Gr00}
		\Big| \frac{\langle \alpha^l, \tilde{\alpha}^{k+1} \rangle_{\mathbb{R}^N}}{||\tilde{\alpha}^{k+1}||^2_{\mathbb{R}^N}}\Big| ||\tilde{\alpha}^{k+1}||_{\mathbb{R}^N}=||\text{Pr}_{\langle \tilde{\alpha}^{k+1} \rangle} \alpha^l||_{\mathbb{R}^N} \overset{\text{Lemma}~\ref{projection fact}}{\leq} ||\text{Pr}_{\langle \tilde{\alpha}^1, \cdots, \tilde{\alpha}^{k} \rangle^{\perp}} \alpha^l||_{\mathbb{R}^N}.
		\end{equation}
		
		By the assumption of mathematical induction, we have that
		\begin{align}\label{coefficient Gr lemma key 00}
			||\text{Pr}_{\langle \tilde{\alpha}^1, \cdots, \tilde{\alpha}^{k} \rangle^{\perp}} \alpha^l||_{\mathbb{R}^N} &\overset{\text{Lemma}~\ref{projection fact}}{=} ||\alpha^l - \text{Pr}_{\langle \tilde{\alpha}^1, \cdots, \tilde{\alpha}^{k} \rangle}(\alpha^l)||_{\mathbb{R}^N} \notag \\
			&\overset{\text{Lemma}~\ref{projection fact}}{=}||\alpha^l-\sum_{i=1}^{k}\frac{\langle \alpha^l, \tilde{\alpha}^i \rangle_{\mathbb{R}^N}}{||\tilde{\alpha}^i||^2_{\mathbb{R}^N}} \tilde{\alpha}^i||_{\mathbb{R}^N} \notag \\
			\overset{\eqref{coefficient bounded Gr induction assumption}}&{=}\inf_{c_1, \cdots, c_k \in [-\tilde{C}, \tilde{C}]} ||\alpha^l-\sum_{i=1}^{k}c_i \alpha^i||_{\mathbb{R}^N}. 
		\end{align}

 From the above and by Lemma~\ref{coefficients bounded polynomials}, we obtain
		\begin{align}\label{key bound for coefficient Gr0}
		||\text{Pr}_{\langle \tilde{\alpha}^1, \cdots, \tilde{\alpha}^{k} \rangle^{\perp}} \alpha^l||_{\mathbb{R}^N} \overset{\eqref{coefficient Gr lemma key 00}}&{=} \inf_{c_1, \cdots, c_k \in [-\tilde{C}, \tilde{C}]} ||\alpha^l-\sum_{i=1}^{k}c_i \alpha^i||_{\mathbb{R}^N} \notag \\
		&\leq \tilde{C}\inf_{c_1, \cdots, c_k \in [-\tilde{C}, \tilde{C}]} \max_{n=1, \cdots, N} \Big|\int_a^b \eta_n(\tilde{\tilde{f}}_l'-\sum_{i=1}^{k} c_i \tilde{\tilde{f}}_i')\Big| \notag \\
		\overset{\textbf{Case 1}, \eqref{good modulus Gr 2}}&{\leq} \tilde{C} \tilde{\tilde{\alpha}} (b-a)^m \inf_{c_1, \cdots, c_k \in [-\tilde{C}, \tilde{C}]}  \int_a^b |TF_l'-\sum_{i=1}^k c_i TF_i'| \notag \\
		\overset{\eqref{inf reduction}}&{\leq} \tilde{C} \tilde{\tilde{\alpha}} (b-a)^m \inf_{c_1, \cdots, c_k \in \mathbb{R}} \int_a^b |TF_l'-\sum_{i=1}^k c_i TF_i'| \notag \\
		\overset{\eqref{ordering1}, \eqref{ordering2}}&{\leq} \tilde{C} \tilde{\tilde{\alpha}} (b-a)^m \inf_{c_1, \cdots, c_k \in \mathbb{R}} \int_a^b |TF_{k+1}'-\sum_{i=1}^k c_i TF_i'| \notag \\
		&\leq \tilde{C} \tilde{\tilde{\alpha}} (b-a)^m \inf_{c_1, \cdots, c_k \in [-\tilde{C}, \tilde{C}]} \int_a^b |TF_{k+1}'-\sum_{i=1}^k c_i TF_i'|. 
		\end{align}

		Fix $c_1, \cdots, c_k \in [-\tilde{C}, \tilde{C}]$. By the Markov inequality (Lemma~\ref{nice subinterval}), there exists $1 \leq n \leq~N$ such that  $|TF_{k+1}'(x)-\sum_{i=1}^k c_i TF_i'(x)| \geq \max_{y \in [a, b]}|TF_{k+1}'(y)-\sum_{i=1}^k c_i TF_i'(y)|/2$ holds for any $x \in I_{rr-1}(n)$. We remark that we can find such $I_{rr-1}(n)$ thanks to Definition~\ref{a good partition definition}.  Since $TF_{k+1}'-\sum_{i=1}^k c_i TF_i'$ has the same sign on $I_{rr-1}(n)$ and so does $\eta_{n}$, we may assume that $\eta_{n} (TF_{k+1}'-\sum_{i=1}^k c_i TF_i') \geq 0$ on $I_{rr-1}(n)$. We remark that $\eta_{n} \geq \tilde{C}^{-1}\tilde{\tilde{\alpha}} (b-a)^m$ on the middle third of $I_{rr-1}(n)$, which implies that 
		\[|\eta_{n}(TF_{k+1}'-\sum_{i=1}^k c_i TF_i'| \geq \tilde{C}^{-1}\tilde{\tilde{\alpha}} (b-a)^m \max_{x \in [a, b]}|TF_{k+1}'-\sum_{i=1}^k c_i TF_i'|
		\]
		 on the middle third of $I_{rr-1}(n)$. Therefore, calculations similar to \eqref{estimate of eta TF1} and \eqref{estimate of eta TF1 continued} give us that
		\begin{align}\label{coefficient bound Gr lemma key 1}
		\tilde{\tilde{\alpha}} (b&-a )^m \int_a^b |TF_{k+1}'-\sum_{i=1}^k c_i TF_i'| \leq \tilde{C}\Big|\int_{I_{rr-1}(n)} \eta_n(TF_{k+1}'-\sum_{i=1}^k c_i TF_i')\Big| \notag \\
		\overset{\eqref{good modulus Gr 2}}&{\leq} C \tilde{\tilde{\alpha}}^2(b-a)^{2m} +\tilde{C}\Big|\int_{I_{rr-1}(n)}\eta_{n}(\tilde{\tilde{f}}_{k+1}'-\sum_{i=1}^k c_i \tilde{\tilde{f}}_i')\Big|\notag \\
		\overset{\textbf{Case 1}}&{\leq} C \tilde{\tilde{\alpha}}^2(b-a)^m \int_a^b |TF_{k+1}'-\sum_{i=1}^k c_i TF_i'|+\tilde{C}\Big|\int_{I_{rr-1}(n)}\eta_{n}(\tilde{\tilde{f}}_{k+1}'-\sum_{i=1}^k c_i \tilde{\tilde{f}}_i')\Big|.
		\end{align}
		The above estimate together with $\tilde{\tilde{\alpha}} < 1/(2C_1)$, we have that
		\begin{align}
			\tilde{\tilde{\alpha}} (b-a)^m \int_a^b |TF_{k+1}'-\sum_{i=1}^k c_i TF_i'|\overset{\eqref{coefficient bound Gr lemma key 1}}&{\leq} \tilde{C} \Big| \int_{I_{rr-1}(n)}\eta_{n}(\tilde{\tilde{f}}_{k+1}'-\sum_{i=1}^k c_i \tilde{\tilde{f}}_i') \Big|\notag \\
			&\leq \tilde{C}|\alpha^{k+1}_n-\sum_{i=1}^k c_i \alpha^i_n| \notag \\
			&\leq \tilde{C} ||\alpha^{k+1}-\sum_{i=1}^k c_i \alpha^i||_{\mathbb{R}^N}. \notag 
		\end{align}
		This  implies that 
		\begin{align}\label{key bound for coefficient Gr1}
			\tilde{\tilde{\alpha}} (b-a)^m \int_a^b |TF_{k+1}'-\sum_{i=1}^k c_i TF_i'| &\leq \tilde{C}\inf_{c_1, \cdots, c_k \in [-\tilde{C}, \tilde{C}]}||\alpha^{k+1}-\sum_{i=1}^k c_i \alpha^i||_{\mathbb{R}^N}\notag \\
			\overset{\eqref{gram schmidt Gr}}&{=}\tilde{C}||\tilde{\alpha}^{k+1}||_{\mathbb{R}^N},
		\end{align}
		for arbitrary $c_1, \cdots, c_k \in [\tilde{C}, \tilde{C}]$. Combining \eqref{key bound for coefficient Gr00}, \eqref{key bound for coefficient Gr0}, and \eqref{key bound for coefficient Gr1}, we obtain
		\begin{equation}
			\Big| \frac{\langle \alpha^l, \tilde{\alpha}^{k+1} \rangle_{\mathbb{R}^N}}{||\tilde{\alpha}^{k+1}||^2_{\mathbb{R}^N}}\Big|  \leq \tilde{C}.
				\end{equation}
				
\end{proof}
From the above lemma, we verified that the coefficients $|c_{k, l}|$ in \eqref{the choice of coeeficients Gr} are uniformly bounded by some constant $\tilde{C}$. Hence we may assume that $c_{k,l} \in A$. Set 
\begin{equation}
	v=(v_1, \cdots, v_N):=\frac{\text{Pr}_{\langle \alpha^1, \cdots, \alpha^{r-2} \rangle^{\perp}}(\alpha^{r-1})}{||\text{Pr}_{\langle \alpha^1, \cdots, \alpha^{r-2} \rangle^{\perp}}\alpha^{r-1}||_{\mathbb{R}^N}} \ \ \text{and } \ \  \tilde{\xi}_r:=\sum_{i=1}^N v_i \eta_i.
\end{equation}

By the choice of the vector $v$, we have that 
\begin{equation}\label{perturbation orthogonal to functions Gr}
	\int_{P^2_{rr-1}}\tilde{\xi}_r \tilde{\tilde{f}}_k'= \int_{P^2_{rr-1}}\tilde{\xi}_r f_k' = \langle v, \alpha^k \rangle_{\mathbb{R}^N}=0 
\end{equation}
for all $1\leq k \leq r-2$.

Also,
\begin{align}\label{bound of the coefficient of the last perturbation2}
	\int_{P^2_{rr-1}}\tilde{\xi}_r \tilde{\tilde{f}}_{r-1}' &=\int_{P^2_{rr-1}}\tilde{\xi}_r f_{r-1}'  =\langle v, \alpha^{r-1} \rangle_{\mathbb{R}^N} \notag \\
	\overset{\text{Lemma~\ref{projection fact}}}&{=}\langle v, \text{Pr}_{\langle \alpha^1, \cdots, \alpha^{r-2} \rangle^{\perp}}(\alpha^{r-1})+\text{Pr}_{\langle \alpha^1, \cdots, \alpha^{r-2} \rangle}(\alpha^{r-1}) \rangle_{\mathbb{R}^N} \notag \\
	&=||\text{Pr}_{\langle \alpha^1, \cdots, \alpha^{r-2} \rangle^{\perp}}(\alpha^{r-1})||_{\mathbb{R}^N} \notag \\
	&= ||\tilde{\alpha}^{r-1}||_{\mathbb{R}^N}\geq \tilde{C}^{-1}\tilde{\tilde{\alpha}} (b-a)^m \int_a^b|TF_{r-1}'-\sum_{i=1}^{r-2} c_i TF_i'|,
\end{align}
where the last inequality follows by the calculation used to obtain \eqref{key bound for coefficient Gr1}. Finally, we define $\xi_r$ on $P^2_{rr-1}$ by 
\begin{equation}\label{final perturbation Gr}
	\xi_r:=\frac{\mathcal{A}_{rr-1}}{\int_{P^2_{rr-1}}\tilde{\tilde{f}}_{r-1}'\tilde{\xi}_r }\tilde{\xi}_r.
\end{equation}
By \eqref{bound of the coeeficient of the last perturbation1} and \eqref{bound of the coefficient of the last perturbation2}, we have that
\begin{equation}
	\max_{x \in P^2_{rr-1}}|D^{k} \xi_r (x)|\leq C \tilde{\tilde{\alpha}},
\end{equation}
for $k=1, \cdots, m$. It is easy to check from \eqref{perturbation orthogonal to functions Gr} and \eqref{final perturbation Gr} that we have \eqref{Case 1 Gr our goal for the last perturbation}.

We finally construct $\phi_1, \cdots, \phi_r$ on $[a, b]$ by
\begin{equation}\label{case 1 our final perturbation Gr}
	\phi_i  :=
     \begin{cases}
       \psi_i &\quad\text{on $\cup_{1 \leq j <i \leq r} P^1_{ij}$,}\\
       \xi_i &\quad\text{on $\cup_{1\leq j<i\leq r} P^2_{ij}$,} \\
       0 &\quad\text{on $[a, b] \setminus \cup_{1 \leq j<i\leq r} Q_{ij}$,}
     \end{cases}
\end{equation}
for each $1\leq i \leq r$ where we set $\psi_r\equiv 0$ on $[a, b]$ for convenience. One needs to check that these perturbations $\phi_1, \cdots, \phi_r$  satisfy all the properties in the statement of Proposition~\ref{New Perturbation Gr} with the modulus of continuity $(C_1\vee \tilde{C}_1) \tilde{\tilde{\alpha}}$. We only check Property $5$ as it is straightforward to check the other properties. We first note that $ P^1_{ij} \cup P^2_{ij} = Q_{ij}$ for all $1 \leq j<i \leq r$. We fix arbitrary $1\leq k \leq r-1$ and $k<i \leq r$. Since we used the induction hypothesis to construct $\psi_1, \cdots, \psi_r$, we know that for all $1\leq \tilde{k} \leq k-1$
\begin{equation}\label{final perturbation check Gr 1}
	\int_{P^1_{ik}}\psi_i f_{\tilde{k}}' =0.
\end{equation}
We have the following list of properties for $\xi_1, \cdots, \xi_r$:
\begin{enumerate}
	\item $\xi_1 \equiv \cdots \equiv \xi_r \equiv 0$ on $\cup_{1 \leq j < i \leq r-1} P^2_{ij} $,
	\item $\xi_1 \equiv \cdots \equiv \xi_{r-1} \equiv 0$ on $\cup_{1 \leq j  \leq r-1} P^2_{rj}$,
	\item $\xi_r \perp \tilde{f}_{\tilde{k}} \ (\equiv f_{\tilde{k}})$ on $L^2(P^2_{rk})$ for all $1 \leq \tilde{k} \leq k-1$.
\end{enumerate}
Hence for all $1\leq \tilde{k} \leq k-1$,
\begin{equation}
	\int_{P^2_{ik}}\xi_i \tilde{f}_{\tilde{k}}'=\int_{P^2_{ik}}\xi_i f_{\tilde{k}}'=0.
\end{equation}
Collecting all of the above properties, for each $1\leq \tilde{k}\leq k-1$, we have
\begin{align}
	\int_{Q_{ik}}\phi_i f_{\tilde{k}}' &=\int_{P^1_{ik}}\phi_i f_{\tilde{k}}'+ \int_{P^2_{ik}}\phi_i f_{\tilde{k}}' =\int_{P^1_{ik}}\psi_i f_{\tilde{k}}'+ \int_{P^2_{ik}}\xi_i f_{\tilde{k}}'. \notag \\
	&=0
\end{align}
Therefore, $\phi_i \perp f_{\tilde{k}}'$ on $L^2(Q_{ik})$ for all $1 \leq \tilde{k} \leq k-1$. One can easily check that $\phi_j \equiv 0$ on $Q_{ik}$ for $j\neq i$. This completes the proof of \textbf{Case 1}.

\hfill

 \textbf{Case 2}: When $\inf_{c_1, \cdots, c_{k-1} \in \mathbb{R}} \int_{a}^{b}|TF_k'-c_1TF_1'-\cdots-c_{k-1}TF_{k-1}| < (b-a)^m$  for some $1 \leq k \leq r-1$.

 For given uniform $(r, R)$-good subsets $(Q_{ij})_{1 \leq j <i \leq r}$, take uniform $(r, R)$-good subsets $(P^1_{ij})_{1 \leq j<i\leq r}$ with $L_1', L_2'>0$ and $(P^2_{ij})_{1 \leq j<i \leq r}$ with $\tilde{L}_1, \tilde{L}_2>0$ as in Property $2$ of Lemma~\ref{a good partition properties}. By using Property $3$ and then Property $1$ in Lemma~\ref{a good partition properties}, we may assume that $(P^1_{ij})_{1 \leq j<i\leq r-1}$ are uniform $(r-1, R+1)$-good subsets. 
 In order to use the induction hypothesis, we specify the inputs. We choose
	\begin{itemize}
		\item jets to be $(F_i)_{1\leq i \leq r-1}$ and $(F_{ij})_{1\leq j<i \leq r-1}$,
		\item $C^m$ extensions of $F_1, \cdots, F_{r-1}$ to be $f_1 \cdots, f_{r-1}$,
		\item $(r-1, R+1)$-uniform good subsets to be $(P^1_{ij})_{1 \leq j<i \leq r-1}$,
		\item $L^2$ functions to be $\tilde{h}_1=h_1, \cdots, \tilde{h}_{R}:=h_{R}, \tilde{h}_{R+1}:=f_r'$.
	\end{itemize}
	 One can easily see that the above jets $(F_i)_{1\leq i \leq r-1}$ and $(F_{ij})_{1\leq j<i \leq r-1}$ can be viewed as Whitney fields in $\mathbb{G}_{r-1}$ and satisfy the ordering(\eqref{ordering1} and  \eqref{ordering2}) and the generalized $A/V$ condition in $\mathbb{G}_{r-1}$ by plugging $c_{r-1}=\tilde{c}_{r-1}=0$ in \eqref{The generalized A/V in Gr equation}. Using the induction hypothesis with the above inputs, we can find a modulus of continuity $\beta$ and perturbations $\psi_1, \cdots, \psi_{r-1}$ on $[a, b]$  satisfying the properties in the statement of Proposition~\ref{New Perturbation Gr} with the above inputs.

	Set $\tilde{f}_i:=f_i+\psi_i$ and $\tilde{\alpha}:=\alpha+\beta$. By updating Whitney extensions $f_i$ and the modulus of continuity $\alpha$ to $\tilde{f}_i$ and $\tilde{\alpha}$, we may assume that 
	\begin{equation}\label{reduction by mathematical induction case 2}
	 	\mathcal{A}_{ij}=0 \ \ \text{for all $1\leq j<i\leq r-1$}.
	 \end{equation}
	and we can keep \eqref{good modulus Gr 1}, \eqref{good modulus Gr 2}, \eqref{good modulus Gr 3}, and \eqref{estimate after extension}. Note that since $\psi_1 \equiv \cdots \equiv \psi_{r-1}\equiv 0$ on $[a, b]\setminus \cup_{1 \leq j<i \leq r-1} P^1_{ij}$, we have 
	\begin{equation}\label{updating f does not change case 2}
		\tilde{f}_i=f_i \ \ \text{on} \cup_{1 \leq j <i \leq r} P^2_{ij}.
	\end{equation}
	
	Using Property $3$ in Lemma~\ref{a good partition properties} with $k=r-1$, $(\tilde{P}^2_{ij})_{1 \leq j<i \leq r-1}$ defined by 
	  \begin{equation}
	\tilde{P}^2_{ij}  :=
     \begin{cases}
       P^2_{ij} &\quad\text{if $1\leq j<i\leq r-2$,}\\
       P^2_{rj}  &\quad \text{if $i=r$ and $1\leq j \leq r-2$, } 
     \end{cases}
\end{equation}
	are uniform $(r-1, R)$-good subsets. By Property $1$ in Lemma~\ref{a good partition properties} again, we may assume that $(\tilde{P}^2_{ij})_{1 \leq j<i\leq r-1}$ are uniform $(r-1, R+1)$-good subsets. We also define Whitney fields $(G_i)_{1 \leq i \leq r-1}$ and $(G_{ij})_{1 \leq j<i \leq r-1}$ by the same way as in \eqref{def of Gi 1} and \eqref{def of Gi 2}.  Then, again, $(G_i)_{1 \leq i \leq r-1}$ and $(G_{ij})_{1 \leq j<i \leq r-1}$ can be viewed as Whitney fields in $\mathbb{G}_{r-1}$ and satisfy the generalized $A/V$ condition in $\mathbb{G}_{r-1}$ by plugging $c_{r-1}=\tilde{c}_{r-1}=0$ in \eqref{The generalized A/V in Gr equation}. Moreover, they also satisfy the ordering \eqref{ordering1} and \eqref{ordering2}.
	In order to use the induction hypothesis, we choose
	\begin{itemize}
		\item jets to be $(G_i)_{1\leq i \leq r-1}$ and $(G_{ij})_{1\leq j<i \leq r-1}$,
		\item $C^m$ extensions of $G_1, \cdots, G_{r-1}$ to be $g_1:=\tilde{f}_1, \cdots, g_{r-2}:=\tilde{f}_{r-2}, g_{r-1}:=\tilde{f}_{r}$,
		\item $(r-1, R+1)$-uniform good subsets to be $(\tilde{P}^2_{ij})_{1 \leq j<i \leq r-1}$,
		\item $L^2$ functions to be $\tilde{h}_1=h_1, \cdots, \tilde{h}_{R}:=h_{R}, \tilde{h}_{R+1}:=f_{r-1}'$.
	\end{itemize}

	By the induction hypothesis with the above inputs, we can find the modulus of continuity $\tilde{\beta}$ and $C^{\infty}$ perturbations $\tilde{\psi}_1, \cdots, \tilde{\psi}_{r-1}$ satisfying the properties in the statement of Proposition~\ref{New Perturbation Gr} with the above inputs.

	As in \textbf{Case 1}, these perturbations $\tilde{\psi}_1, \cdots, \tilde{\psi}_{r-1}$ may affect the vertical areas for $1 \leq j<i\leq r-1$. Hence we have to check the following.

	 \begin{lemma}\label{correcting everything except the last component case 2}
	 Set $\xi_i:=\tilde{\psi}_i$ for all $1 \leq i \leq r-2$,  $\xi_{r-1}\equiv 0$ and $\xi_r:=\tilde{\psi}_{r-1}$ on $[a, b]$.
	 	The perturbations do not affect the vertical areas for  $1 \leq j<i \leq r-1$ while correcting $(\mathcal{A}_{rj})_{1 \leq k \leq r-2}$, i.e., we have that for all $1 \leq j<i\leq r-1$,
	 	\begin{equation}\label{vertical areas not affected 1}
	 		0=\mathcal{A}_{ij}=\int_a^b \xi_i \tilde{f}_j'-\tilde{f}_i'\xi_j+ \xi_i \xi_j' \ \Big(\overset{\eqref{updating f does not change case 2}}{=}\int_a^b \xi_i f_j'-f_i'\xi_j+ \xi_i \xi_j' \Big),
	 	\end{equation}
	 	and for all  $1\leq j \leq r-2$,
	 	\begin{equation}\label{vertical areas not affected 2}
	 		\mathcal{A}_{rj}=\int_a^b \xi_r \tilde{f}_j'-\tilde{f}_r'\xi_j+ \xi_r \xi_j' \ \Big(\overset{\eqref{updating f does not change case 2}}{=}\int_a^b \xi_i f_j'-f_i'\xi_j+ \xi_i \xi_j' \Big).
	 	\end{equation}
	 \end{lemma}
	 \begin{proof}
	 It is obvious that \eqref{vertical areas not affected 2} holds for all $1\leq j \leq r-2$ due to the induction hypothesis. So we only verify \eqref{vertical areas not affected 1} for all $1 \leq j<i\leq r-1$.

	 We first note that $\xi_1\equiv \cdots \equiv \xi_{r-2}\equiv 0$ on $\cup_{1 \leq j<i \leq r-2} \tilde{P}^2_{ij}$ since $\mathcal{A}_{ij}=0$ for all $1 \leq j < i \leq r-2$, see Property~$4$ in the statement. Hence we check the behavior of perturbations $\xi_1, \cdots, \xi_{r-2}$ on $\cup_{1 \leq j \leq r-2} \tilde{P}^2_{r-1j}$.
	 
	  We first take the minimum number $1\leq k_1 \leq r-1$ for which 
\begin{equation}\label{minimum number k case 2}
	\inf_{c_1, \cdots, c_{k_1-1} \in \mathbb{R}} \int_{a}^{b}|TF_{k_1}'-c_1TF_1'-\cdots-c_{k_1-1}TF_{k_1-1}'| < (b-a)^m
\end{equation}
holds. If $k_1=r-1$, then the proof of Lemma~\ref{correcting everything except the last component case 1} gives \eqref{vertical areas not affected 1} for all $1 \leq j<i \leq r-1$. Hence we may assume that $1\leq k_1 \leq r-2$. By Property $5$ in Proposition~\ref{New Perturbation Gr}, we have $\xi_1\equiv \cdots \equiv \xi_{r-1}\equiv 0$ on $\cup_{1 \leq j \leq k_1-1} \tilde{P}^2_{r-1j}$. We next fix arbitrary $k_1 \leq k< r-1$. By Property~$6$ in Proposition~\ref{New Perturbation Gr}, we have  that $\xi_n\equiv 0$ on $\tilde{P}^2_{r-1k}$ for all $1 \leq n\neq k < r-1$ and $\xi_k \perp f_l'$ on $\tilde{P}^2_{r-1k}$ for every $1 \leq l \leq r$ due to the fact that we have set $\tilde{h}_{R+1}:=f_{r-1}'$. Moreover, we claim that $\xi_i \xi_j' \equiv 0$ on $[a, b]$ for all $1 \leq j < i \leq r-1$. In fact, if $i=r-1$, the claim is obvious. Hence we assume that $i<r-1$. In this case,  $\xi_i \xi_j' \equiv 0$ on $\cup_{1 \leq l \leq r-2}\tilde{P}^2_{r-1l}$ by Properties $5$ and $6$. Hence the claim follows. Collecting these properties we have that for all $1 \leq j<i\leq r-1$,
\begin{align}
	\int_a^b \xi_i f_j'-f_i'\xi_j+ \xi_i \xi_j' &=\int_{\cup_{1 \leq l \leq r-2} \tilde{P}^2_{r-1l}}\xi_i  f_j'-f_i'\xi_j+ \xi_i \xi_j' \notag \\
	&=\int_{\cup_{1 \leq l \leq k_1-1} \tilde{P}^2_{r-1l}}\xi_i f_j'-f_i'\xi_j+ \xi_i \xi_j' +\int_{\cup_{k_1 \leq l \leq r-2} \tilde{P}^2_{r-1l}} \xi_i f_j'-f_i'\xi_j+ \xi_i \xi_j' \notag \\
	&=0.
\end{align}
	This completes the proof.
	 \end{proof}
	 
	 Set $\tilde{\tilde{f}}_i:=\tilde{f}_i+\xi_i$ for each $1\leq i \leq r$ and $\tilde{\tilde{\alpha}}:=\tilde{\alpha}+\tilde{\beta}$. 
	 By the above lemma, we may further assume that $\mathcal{A}_{rj}=0$ for all $1 \leq j \leq r-2$ by updating $\tilde{f}_1, \cdots, \tilde{f}_r$ and the modulus of continuity $\tilde{\alpha}$ to $\tilde{\tilde{f}}_1, \cdots, \tilde{\tilde{f}}_r$ and $\tilde{\tilde{\alpha}}$, respectively. Since $\tilde{\tilde{f}}_j=f_j$ on $[a,b]\setminus \cup_{1 \leq j<i\leq r-1}(P^1_{ij} \cup \tilde{P}^2_{ij})$ for every $1\leq j \leq r$, we construct further perturbations on $P^2_{rr-1}$.  The rest of the proof is devoted to correcting the last $\mathcal{A}_{rr-1}$ component by constructing  perturbations $\xi_{r-1}$ and $ \xi_r$ on $P^2_{rr-1}$.

By the assumption of \textbf{Case 2} (\eqref{minimum number k case 2}) and the ordering(\eqref{ordering1}, \eqref{ordering2}), we know that 
\begin{align}
	\inf_{c_1, \cdots, c_{k_1-1} \in \mathbb{R}}\int_{a}^{b}|TF_{r-1}' &-c_1TF_1'-\cdots-c_{k_1-1}TF_{k_1-1}| \notag \\
	&\leq \inf_{c_1, \cdots, c_{k_1-1} \in \mathbb{R}}\int_{a}^{b}|TF_{k_1}'-c_1TF_1'-\cdots-c_{k_1-1}TF_{k_1-1}| \notag \\
	&< (b-a)^m
\end{align}
 By Lemma~\ref{coefficients bounded polynomials}, take $c_1, \cdots, c_{k_1-1} \in [-\tilde{C}, \tilde{C}]$ such that 
\[
\int_{a}^{b}|TF_{r-1}'-c_1TF_1'-\cdots-c_{k_1-1}TF_{k_1-1}| < \tilde{C}(b-a)^m
\] 
and set $c_l=0$ for all $k_1 \leq l \leq r-2$. We also take $\tilde{c}_1, \cdots, \tilde{c}_{k_1-1} \in [-\tilde{C}, \tilde{C}]$ such that $\int_{a}^{b}|TF_{r}'-\tilde{c}_1TF_1'-\cdots-\tilde{c}_{k_1-1}TF_{k_1-1}| < \tilde{C}(b-a)^m$ and set $\tilde{c}_l=0$ for all $k_1 \leq l \leq r-2$.
In this case we have
\begin{align}\label{remained area case 2 Gr}
	|\mathcal{A}_{rr-1}|&=|\mathcal{E}_{rr-1}(c, \tilde{c})| \notag \\
	\overset{\eqref{estimate after extension}}&{\leq} C \tilde{\tilde{\alpha}}(b-a)^{2m}+ C \tilde{\tilde{\alpha}} (b-a)^m(\Delta_r(\tilde{c})+\Delta_{r-1}(c)) \notag \\
	&\leq  C \tilde{\tilde{\alpha}} (b-a)^{2m}.
\end{align}

From here, we repeat the technique that already appeared in Proposition~\ref{Case1}. Note that $P^2_{rr-1}=\sqcup_{k=1}^{\tilde{L}_1} I_{rr-1}(k)$. For each subinterval $I_{rr-1}(k)$ of $P^2_{rr-1}$, we construct smooth $\tilde{\eta}_k$ and $\eta_k$ supported on $I_{rr-1}(k)$ as in \cite[Lemma~6.6]{PSZ}, i.e., 
\begin{enumerate}
	\item $D^i \tilde{\eta}_k|_{\partial I_{rr-1}(k)}=0$ where $\partial I_{rr-1}(k)$ is the set of endpoints of $I_{rr-1}(k)$,
	\item $|D^i \tilde{\eta}_k| \leq \tilde{C} \sqrt{\tilde{\tilde{\alpha}}}$ on $I_{rr-1}(k)$,
	\item $\tilde{\eta}_k'\geq \tilde{C}^{-1} \sqrt{\tilde{\tilde{\alpha}}}(b-a)^{m-1}$ on the middle third of $I_{rr-1}(k)$,
\end{enumerate}
and 
\begin{enumerate}
	\item $D^i \eta_k|_{\partial I_{rr-1}(k)}=0$ where $\partial I_{rr-1}(k)$ is the set of endpoints of $I_{rr-1}(k)$,
	\item $|D^i \eta_k| \leq \tilde{C} \sqrt{\tilde{\tilde{\alpha}}}$ on $I_{rr-1}(k)$,
	\item $\eta_k \geq \tilde{C}^{-1} \sqrt{\tilde{\tilde{\alpha}}}(b-a)^m$ on the middle third of $I_{rr-1}(k)$.
\end{enumerate}
Since we can construct $\xi_k$ and $\eta_k$ in exactly the same manner on each $I_{rr-1}(k)$ and $|I_{rr-1}|$ is independent of $k$ by Definition~\ref{a good partition definition}, we may assume that $L:=\int_{I_{rr-1}(k)}\eta_k'\xi_k$ is a constant which is independent of $k$. Set $N:=\tilde{L}_1$ and 
\[
\xi_{r-1}:=\sum_{k=1}^{N}v_k\tilde{\eta}_k \ \ \text{and} \ \ \xi_{r}:=\sum_{k=1}^{N}v_k'\eta_k
\]
on $P^2_{rr-1}$ where $v=(v_k)_{k=1}^N, v':=(v_k')_{k=1}^N$. Consider
\[
V_{r-1}:=\Big\{v \in \mathbb{R}^N| \ \xi_{r-1} \perp f_l', h_k \ \text{for all $1 \leq l \leq r$ and all $1\leq k \leq R$ on $L^2(P^2_{rr-1})$} \Big\}
\]
and 
\[
V_{r}:=\Big\{v' \in \mathbb{R}^N| \ \xi_{r} \perp f_l', h_k \ \text{for all $1 \leq l \leq r$ and all $1 \leq k \leq R$ on $L^2(P^2_{rr-1})$} \Big\}.
\]
Recall here that $2(r+R)< N$, which implies that $N/2 < N-(r+R)$. Hence we have that dim$V_{r-1}$, dim$V_r \geq N-(r+R) > N/2 $ by linear algebra. Therefore, there exists $x=(x_1, \cdots, x_N) \in V_1\cap V_2$ such that $||x||_{\mathbb{R}^N}=1$. Choosing $v_k=v_k'=x_k$, we have
\begin{equation}\label{case Gr estimate 3}
	\int_{P^2_{rr-1}}\xi_r'\xi_{r-1}=\sum_{k=1}^{N}x_k^2\int_{I_{rr-1}(k)}\eta_k'\tilde{\eta}_k=\int_{I_{rr-1}(i)}\eta_i'\tilde{\eta}_i \geq \tilde{C}^{-1} \tilde{\tilde{\alpha}} (b-a)^{2m}.
\end{equation}
From \eqref{remained area case 2 Gr}, \eqref{case Gr estimate 3} and by scaling these perturbations $\phi_{r-1}$ and $\phi_r$ by some constant $C$, we have 
\begin{equation}\label{case 2 Gr after scaling}
	\mathcal{A}_{rr-1}= \int_{P^2_{rr-1}}\xi_r \xi_{r-1}'
\end{equation} 
and 
\begin{equation}
	\max_{x \in P^2_{rr-1}}\{|D^{k} \xi_r (x)|, |D^{k} \xi_{r-1} (x)|\} \leq C \tilde{\tilde{\alpha}},
\end{equation}
for $k=1, \cdots, m$.
We finally construct $\phi_1, \cdots, \phi_r$ by
\begin{equation}\label{case 2 our final perturbation Gr}
	\phi_i  :=
     \begin{cases}
       \psi_i &\quad\text{on $\cup_{1 \leq j <i \leq r} P^1_{ij}$,}\\
       \xi_i &\quad\text{on $\cup_{1\leq j<i\leq r} P^2_{ij}$,} \\
       0 &\quad\text{on $[a, b] \setminus \cup_{1 \leq j<i\leq r} Q_{ij}$,}
     \end{cases}
\end{equation}
for each $1\leq i \leq r$ where we set $\psi_r \equiv 0$ on $[a, b]$. One needs to check that these perturbations $\phi_1, \cdots, \phi_r$  satisfy all the properties in the statement of Proposition~\ref{New Perturbation Gr} with the modulus of continuity $(C_1\vee \tilde{C}_1) \tilde{\tilde{\alpha}}$. We only check Properties $5$ and $6$ in the statement of Proposition~\ref{New Perturbation Gr} as it is straightforward to check the other properties. We first note that $ P^1_{ij} \cup P^2_{ij} = Q_{ij}$ for all $1 \leq j<i \leq r$. Let $k_1 \in \mathbb{N}$ be the number taken in \eqref{minimum number k case 2}. 
We first fix arbitrary $1\leq k \leq k_1-1$ and $k<i \leq r$. For this pair of $k$ and $i$, we need to check Property $5$ in the statement of Proposition~\ref{New Perturbation Gr}. Since we used the induction hypothesis to construct $\psi_1, \cdots, \psi_r$, we know that for every  $1\leq \tilde{k} \leq k-1$ and $1\leq n \neq k \leq r$,
\begin{equation}\label{final perturbation check Gr 1}
	\int_{P^1_{ik}}\psi_i f_{\tilde{k}}' =0 \ \ \text{and} \ \ \psi_n \equiv 0 \ \ \text{on $P^1_{ik}$}.
\end{equation}
We have the following list of properties for $\xi_1, \cdots, \xi_r$:
\begin{enumerate}
	\item $\xi_1 \equiv \cdots \equiv \xi_r \equiv 0$ on $\cup_{1 \leq j < i \leq r-1} P^2_{ij} $,
	\item $\xi_1 \equiv \cdots \equiv \xi_{r-1} \equiv 0$ on $\cup_{1 \leq j  \leq k_1-1} P^2_{rj}$,
	\item $\xi_r \perp \tilde{f}_{\tilde{k}} \ (\equiv f_{\tilde{k}})$ on $L^2(P^2_{rk})$ for all $1 \leq \tilde{k} \leq k-1$.
\end{enumerate}
One can check from the above three properties that for all $1\leq \tilde{k} \leq k-1$,
\begin{equation}
	\int_{P^2_{ik}}\xi_i \tilde{f}_{\tilde{k}}'=\int_{P^2_{ik}}\xi_i f_{\tilde{k}}'=0.
\end{equation}
Collecting all of the above properties, for each $1\leq \tilde{k}\leq k-1$, we have
\begin{align}
	\int_{Q_{ik}}\phi_i f_{\tilde{k}}' &=\int_{P^1_{ik}}\phi_i f_{\tilde{k}}'+ \int_{P^2_{ik}}\phi_i f_{\tilde{k}}' =\int_{P^1_{ik}}\psi_i f_{\tilde{k}}'+ \int_{P^2_{ik}}\xi_i f_{\tilde{k}}' \notag \\
	&=0.
\end{align}
Therefore, $\phi_i \perp f_{\tilde{k}}'$ on $L^2(Q_{ik})$ for all $1 \leq \tilde{k} \leq k-1$.

We next fix arbitrary $k_1\leq k \leq r$ and $k<i \leq r$. For this pair of $i$ and $k$, we need to check Property $6$ in the statement of Proposition~\ref{New Perturbation Gr}.  By the induction hypothesis to construct $\psi_1, \cdots, \psi_r$, we know that 
\begin{equation}\label{Remark needed Gr}
	\psi_i, \psi_k \perp f_{\tilde{k}} \ \ \text{on $L^2(P^1_{ik})$} \ \ \text{for all $1 \leq \tilde{k} \leq r$} 
\end{equation}
and
\begin{equation}
	\psi_i, \psi_k \perp h_{\tilde{k}} \ \ \text{on $L^2(P^1_{ik})$} \ \ \text{for all $1 \leq \tilde{k} \leq R$}.
\end{equation}
Note that one can conclude that $\psi_i, \psi_k \perp f_{r}'$ in \eqref{Remark needed Gr} due to the fact that we have set $\tilde{h}_{R+1}:=f_r'$ when we used the induction hypothesis to construct $\psi_1, \cdots, \psi_{r-1}$.
Also, we have the following list of properties for $\xi_1, \cdots, \xi_r$:
\begin{enumerate}
	\item $\xi_1 \equiv \cdots \equiv \xi_r \equiv 0$ on $\cup_{1 \leq j < i \leq r-1} P^2_{ij} $,
	\item $\xi_1 \equiv \cdots \equiv \xi_{r-1} \equiv 0$ on $\cup_{1 \leq j  \leq k_1-1} P^2_{rj}$,
	\item $\xi_r, \xi_{k} \perp \tilde{f}_{\tilde{k}} \ (\equiv f_{\tilde{k}})$ on $L^2(P^2_{rk})$ for all $1 \leq \tilde{k} \leq r$,
	\item $\xi_r, \xi_{k} \perp h_{\tilde{k}}$ on $L^2(P^2_{rk})$ for all $1 \leq \tilde{k} \leq R$.
\end{enumerate}
Note that we have that $\xi_r, \xi_k \perp f_{r-1}'$ on $L^2(P^2_{rk})$ above due to the fact that we have set $\tilde{h}_{R+1}:=f_{r-1}'$ when we used the induction hypothesis to construct $\xi_1, \cdots, \xi_{r}$. The above four properties of $\xi_1, \cdots, \xi_r$ imply that $\xi_i, \xi_k \perp f_{\tilde{k}}$ on $L^2(P^2_{ik})$ for all $1 \leq \tilde{k} \leq r$ and $\xi_i, \xi_{k} \perp h_{\tilde{k}}$ on $L^2(P^2_{ik})$ for all $1 \leq \tilde{k} \leq R$. Therefore, for all $1 \leq \tilde{k} \leq r$ we have
\begin{equation}
	\int_{Q_{ik}}\phi_i f_{\tilde{k}}'=\int_{P^1_{ik}} \psi_i f_{\tilde{k}}'+\int_{P^2_{ik}} \xi_i f_{\tilde{k}}'=0 \ \ \text{and} \ \ \int_{Q_{ik}}\phi_k f_{\tilde{k}}'=\int_{P^1_{ik}} \psi_k f_{\tilde{k}}'+\int_{P^2_{ik}} \xi_k f_{\tilde{k}}'=0,
	\end{equation}
	and for all $1 \leq \tilde{k} \leq R$ we have
	\begin{equation}
	\int_{Q_{ik}}\phi_i h_{\tilde{k}}'=\int_{P^1_{ik}} \psi_i h_{\tilde{k}}'+\int_{P^2_{ik}} \xi_i h_{\tilde{k}}'=0 \ \ \text{and} \ \ \int_{Q_{ik}}\phi_k h_{\tilde{k}}'=\int_{P^1_{ik}} \psi_k h_{\tilde{k}}'+\int_{P^2_{ik}} \xi_k h_{\tilde{k}}'=0.
	\end{equation}
  So we conclude that $\phi_i, \phi_k \perp f_{\tilde{k}}'$ on $L^2(Q_{ik})$ for all $1 \leq \tilde{k} \leq r$ and $\phi_i, \phi_k \perp h_{\tilde{k}}$ on $L^2(Q_{ik})$ for all $1 \leq \tilde{k} \leq R$. This completes the proof of \textbf{Case 2}. So we finished the proof of Proposition~\ref{New Perturbation Gr}.
\end{proof}

The construction of a horizontal curve that extends the jets $(F_i)_{1 \leq i \leq r}$ and $(F_{ij})_{1\leq j<i \leq r}$ follows as in the $\mathbb{G}_3$ case. Namely, for each $[a_l, b_l]$, take $\phi^l_1, \cdots, \phi^l_r$ on each $[a_l, b_l]$ satisfying all the properties in Proposition~\ref{New Perturbation Gr}. Set 
 
\begin{equation}\label{final horizontal extension1 Gr}
	\hat{f}_i(x)  :=
     \begin{cases}
       f_i(x)+\phi^l_i(x) &\quad\text{if $x \in [a_l, b_l]$,}\\
       f_i(x) &\quad\text{if $x \in K$,} 
     \end{cases}
\end{equation}
for $i=1,\cdots, r$ and 
\begin{equation}\label{final horizontal extension2 Gr}
	\hat{f}_{ij}(x)  :=
     \begin{cases}
       F_{ij}(a_l)+ 1/2\int_{a_l}^x \hat{f}_i \hat{f}_j'- \hat{f}_i'\hat{f}_j &\quad\text{if $x \in [a_l, b_l]$,}\\
       F_{ij}(x) &\quad\text{if $x \in K$,} 
     \end{cases}
\end{equation}
for each $1\leq j<i\leq r$. As in \cite[Lemma~6.7 and Proposition~6.8]{PSZ}, one can check that a map $\gamma : [-\text{min}K, \max K] \to \mathbb{G}_r$ defined by $\gamma_{i}:=\hat{f}_i$ for $i=1, \cdots, r$ and $\gamma_{ij}:=\hat{f}_{ij}$ for $1 \leq j<i\leq r$ is a $C^m$ horizontal curve such that for any $0 \leq k \leq m$, $D^k \gamma_i|_{K}=F^k_i$ for all  $1\leq i \leq r$ and $D^k\gamma_{ij}|_{K}=F^k_{ij}$ for all $1 \leq j<i\leq r$.   This completes the proof of Proposition~\ref{Our Ultimate Goal}.
 \end{proof}

}

\end{document}